\DeclareFontFamily{U}{wncy}{}
\DeclareFontShape{U}{wncy}{m}{n}{<->wncyr10}{}
\DeclareSymbolFont{mcy}{U}{wncy}{m}{n}
\DeclareMathSymbol{\Sh}{\mathord}{mcy}{"58}
\newcommand{\Sha}{\Sh}
\newtheorem{theorem}{Theorem}[section]
\newtheorem*{theorem*}{Theorem}
\newtheorem{lemma}[theorem]{Lemma}
\newtheorem{corollary}[theorem]{Corollary}
\newtheorem{proposition}[theorem]{Proposition}
\newtheorem*{conjecture*}{Conjecture}
\crefname{theorem}{Theorem}{Theorems}
\crefname{lemma}{Lemma}{Lemmas}
\crefname{corollary}{Corollary}{Corrolaries}
\crefname{proposition}{Proposition}{Propositions}
\crefname{question}{Question}{questions}
\crefname{conjecture}{Conjecture}{Conjectures}
\providecommand{\customgenericname}{}
\newcommand{\newcustomtheorem}[2]{%
  \newenvironment{#1}[1]
  {%
   \renewcommand\customgenericname{#2}%
   \renewcommand\theinnercustomgeneric{\kern-0.3em ##1}%
   \innercustomgeneric
  }
  {\endinnercustomgeneric}
}
\crefname{specialtheorem}{}{}
\theoremstyle{definition}
\newtheorem{definition}[theorem]{Definition}
\newtheorem{assumption}[theorem]{Assumption}
\crefname{definition}{Definition}{Definitions}
\crefname{hypothesis}{Hypothesis}{Hypothesis}
\crefname{assumption}{Assumption}{Assumptions}
\theoremstyle{remark}
\newtheorem{remark}[theorem]{Remark}
\crefname{remark}{Remark}{Remarks}
\crefname{appsec}{Appendix}{Appendices}
\renewenvironment{appendix}{\newpage\begin{appendices}\crefalias{subsection}{appsec}}{\end{appendices}}
\numberwithin{theorem}{subsection}
\numberwithin{equation}{subsection}
\newcommand{\xtwoheadrightarrow}[2][]{\xrightarrow[#1]{#2}\mathrel{\mkern-14mu}\rightarrow
}
\newcommand{\abs}[1]{\##1}
\newcommand{\Gal}[2]{\mathrm{Gal}\left(#1\slash #2\right)}
\newcommand{\Z}{\mathbb{Z}}
\newcommand{\Q}{\mathbb{Q}}
\renewcommand{\C}{\mathbb{C}}
\newcommand{\F}{\mathbb{F}}
\newcommand{\p}{\mathfrak{p}}
\newcommand{\m}{\mathfrak{m}}
\renewcommand{\P}{\mathfrak{P}}
\renewcommand{\O}{\mathcal{O}}
\renewcommand{\L}{\mathscr{L}}
\renewcommand{\H}{\mathcal{H}}
\newcommand{\T}{\mathbb{T}}
\newcommand{\A}{\mathbb{A}}
\newcommand{\Tr}{\mathrm{Tr}}
\newcommand{\Hom}{\mathrm{Hom}}
\newcommand{\rightiso}{\xrightarrow{\sim}}
\newcommand{\iso}{\simeq}
\newcommand{\xrightiso}[1]{\xrightarrow[\sim]{#1}}
\newcommand{\ord}[1]{\mathrm{ord}_{#1}}
\newcommand{\units}[1]{{#1}^{\times}}
\renewcommand{\exp}{\mathrm{exp}}
\newcommand{\Ker}[1]{\mathrm{Ker}\left(#1\right)}
\renewcommand{\Im}[1]{\mathrm{Im}\left(#1\right)}
\newcommand{\Coker}[1]{\mathrm{Coker}\left(#1\right)}
\newcommand{\conj}[1]{\overline{#1}}
\newcommand{\Index}[2]{\left[#1:#2\right]}
\newcommand{\Cohomology}[4]{\mathrm{H}^{#1}_{{#2}}\left(#3,#4\right)}
\newcommand{\loc}[1]{\mathrm{loc}_{#1}}
\newcommand{\rel}{\mathrm{rel}}
\newcommand{\str}{\mathrm{str}}
\renewcommand{\f}{\mathrm{f}}
\newcommand{\s}{\mathrm{s}}
\newcommand{\tr}{\mathrm{tr}}
\newcommand{\ur}{\mathrm{ur}}
\renewcommand{\v}{\mathfrak{v}}
\newcommand{\w}{\mathfrak{w}}
\renewcommand{\log}{\mathrm{log}}
\newcommand{\rank}{\mathrm{rank}}
\newcommand{\Char}{\mathrm{char}}
\newcommand{\length}[2][]{\mathrm{length}_{#1}\left(#2\right)}
\renewcommand{\div}{\mathrm{div}}
\newcommand{\tor}{\mathrm{tor}}
\newcommand{\defeq}{\vcentcolon=}
\newcommand{\eqdef}{=\vcentcolon}
\newcommand{\isequal}{\stackrel{?}{=}}
\newcommand{\equal}{\sim_p}
\newcommand{\ac}{\mathrm{ac}}
\newcommand{\BK}{\mathcal{F}_{\mathrm{BK}}}
\newcommand{\KS}{\mathbf{KS}}
\newcommand{\FLambda}{\mathcal{F}_\Lambda}
\newcommand{\Fac}{\mathcal{F}_{\mathrm{ac}}}
\newcommand{\dlim}[1]{\varinjlim\limits_{#1}}
\newcommand{\ilim}[1]{\varprojlim\limits_{#1}}
\newcommand{\dperiod}[3]{\mathbb{D}_{\mathrm{#1}\def\temp{#2}\ifx\temp\empty\else,\fi#2}\def\temp{#3}\ifx\temp\empty\else(#3)\fi}
\newcommand{\dcris}[2]{\dperiod{cris}{#1}{#2}}
\newcommand{\ddr}[2]{\dperiod{dR}{#1}{#2}}
\newcommand{\Fil}[2]{\mathrm{Fil}^{#1}\left(#2\right)}
\newcommand{\dr}[2]{\ddr{#1}{#2}}
\newcommand{\pair}[2]{\left<#1,\ #2\right>}
\newcommand{\grpring}[1]{\llbracket#1\rrbracket}
\newcommand{\eqnumbersubsection}{\renewcommand{\theequation}{\thesubsection.\alph{equation}}\numberwithin{theorem}{subsection}}
\let\oldequation\equation
\let\endoldequation\endequation
\newcommand{\tmpArg}{}
\renewenvironment{equation}[1][]
{%
    \ifstrempty{#1}{%
        \renewcommand{\tmpArg}{\endnamedequation}%
        \namedequation%
    }{%
        \renewcommand{\tmpArg}{\endoldequation}%
        \oldequation\label{#1}%
    }%
}%
{%
\tmpArg\ignorespacesafterend%
}
\patchcmd{\section}{\normalfont}{\normalfont\LARGE}{}{}
\begin{document}

\eqnumbersubsection

\newcommand{\nocontentsline}[3]{}
\newcommand{\tocless}[2]{\bgroup\let\addcontentsline=\nocontentsline#1{#2}\egroup}

\title{On Howard's main conjecture and the Heegner point Kolyvagin system}
\author{Murilo Zanarella}

\begin{abstract}
We upgrade Howard's divisibility toward Perrin-Riou's Heegner point Main Conjecture to an equality under some mild conditions. We do this by exploiting Wei Zhang's proof of the Kolyvagin conjecture. The main ingredient is an improvement of Howard's Kolyvagin system formalism. As another consequence of it, we establish the equivalence between this main conjecture and the primitivity of the Kolyvagin system in certain cases, by also exploiting a explicit reciprocity law for Heegner points.
\end{abstract}

\date{26 August, 2019}

\maketitle

\setcounter{tocdepth}{3}
\let\oldtocsection=\tocsection
\let\oldtocsubsection=\tocsubsection
\let\oldtocsubsubsection=\tocsubsubsection
\renewcommand{\tocsection}[2]{\hspace{0em}\oldtocsection{#1}{#2}}
\renewcommand{\tocsubsection}[2]{\hspace{1em}\oldtocsubsection{#1}{#2}}
\renewcommand{\tocsubsubsection}[2]{\hspace{2em}\oldtocsubsubsection{#1}{#2}}
\vspace{-0.5in}
{\singlespacing
\tableofcontents}

\section{Introduction}
Fix a quadratic imaginary field $K,$ a prime $p\ge5$ and an elliptic curve $E\slash\Q$ with good reduction at $p$ and with conductor $N_E$ coprime with $D_K.$ Let $T=T_pE$ be the Tate module of $E.$ For a number field $L,$ let $\mathrm{Sel}_{p^\infty}(E/L)$ and $S_p(E/L)$ denote respectively the usual discrete and compact $p$-adic Selmer groups, which fit in the fundamental exact sequences
\begin{equation}
    0\to E(L)\otimes\Q_p/\Z_p\to\mathrm{Sel}_{p^\infty}(E/L)\to\Sha(E/L)[p^\infty]\to0,
\end{equation}
\begin{equation}
    0\to E(L)\otimes\Z_p\to S_p(E/L)\to\ilim{n}\Sha(E/L)[p^n]\to0.
\end{equation}

Let $K_\infty$ denote the anticyclotomic $\Z_p$-extension of $K,$ which is the unique $\Z_p$-extension of $K$ such that the nontrivial element of $\Gal{K}{\Q}$ acts by $-1$ via conjugation, and denote by $\Psi\colon G_K\twoheadrightarrow\Gal{K_\infty}{K}$ the natural projection. We consider the Iwasawa algebra $\Lambda\defeq \Z_p\grpring{\Gal{K_\infty}{K}}$ and the $\Lambda$-modules $\T\defeq T\otimes_{\Z_p}\Lambda$ and $\A\defeq T\otimes_{\Z_p}\Lambda^\vee$ with diagonal Galois action by $\Psi$ on $\Lambda$ and $\Psi^{-1}$ on $\Lambda^\vee,$ where $\Lambda^\vee\defeq\Hom_{\mathrm{cont}}(\Lambda,\Z_p).$

Assuming first that $p$ has ordinary reduction, we consider the generalized Selmer groups
\begin{equation}
    \Cohomology{1}{\FLambda}{K}{\T}\subseteq\ilim{K\subseteq L\subset K_\infty}\Cohomology{1}{}{L}{T},\quad \Cohomology{1}{\FLambda}{K}{\A}\subseteq\dlim{K\subseteq L\subset K_\infty}\Cohomology{1}{}{L}{E[p^\infty]},
\end{equation}
defined in \cite[Definition 3.2.2]{Howard2}. These are such that there are pseudo-isomorphisms
\begin{equation}
    \Cohomology{1}{\FLambda}{K}{\T}\sim \ilim{K\subseteq L\subset K_\infty}S_p(E/L),\quad \Cohomology{1}{\FLambda}{K}{\A}\sim\dlim{K\subseteq L\subset K_\infty}\mathrm{Sel}_{p^\infty}(E/L).
\end{equation}
In this setting, the anticyclotomic Iwasawa Main Conjecture for $E,$ first formulated by Perrin-Riou in \cite{Perrin-Riou} and later extended by Howard in \cite{Howard2}, predicts a relation between a cohomology class $\kappa_1^{\mathrm{Hg}}\in\Cohomology{1}{\FLambda}{K}{\T}$ constructed from the Euler system of Heegner points and
\begin{equation}
    X\defeq\Hom_{\Z_p}(\Cohomology{1}{\FLambda}{K}{\A},\Q_p/\Z_p).
\end{equation}
More precisely, it conjectures that
\begin{equation}
    \Char(X_\tor)\isequal\Char\left(\Cohomology{1}{\FLambda}{K}{\T}/\kappa_1^{\mathrm{Hg}}\right)^2,
\end{equation}
where $\Char(M)$ denotes the characteristic ideal of a torsion $\Lambda$-module.

In the case where $E$ has supersingular reduction at $p,$ an analogue of such conjecture was considered by Castella--Wan in \cite{Castella-Wan} in the case where $p$ splits in $K.$ This is obtained by considering a Selmer structure with different local conditions above $p,$ and by considering a modified version of $\kappa_1^{\mathrm{Hg}}.$

Recently, such conjectures have been verified in certain cases. The divisibility
\begin{equation}
   \Char(X_\tor)\mid \Char(\Cohomology{1}{\FLambda}{K}{\T}/\kappa_1^{\mathrm{Hg}})^2
\end{equation}
was obtained in \cite[Theorem 3.4.3]{Howard2} (and was extended to the supersingular case in \cite[Theorem 5.12]{Castella-Wan}). The opposite divisibility can be obtained in certain cases by exploiting the relation with the BDP main conjecture, whose corresponding divisibility can be obtained by the results of \cite[Theorem 1.2]{Wan} on a two variable main conjecture, as done in \cite[Theorem 1.2]{Wan2}. Wan also proved the equality up to powers of $p$ in more generality, and this can be upgraded to an equality by exploiting the vanishing of the $\mu$-invariant of the BDP $p$-adic $L$-function (\cite[Theorem B]{Burungale} and \cite[Theorem B]{Hsieh}), as done in \cite[Theorem 28]{Skinner} and \cite[Theorem 6.1]{Castella-Wan}.

The goal of this paper is two-fold. First, we build on the work of Howard to show how the full Howard's Main Conjecture can be established from the primitivity of the usual Kolyvagin system (\cref{Theorem-A}), by adapting arguments of \cite{Mazur-Rubin} to the setting of \cite{Howard}. Such primitivity was proven in greater generality by Wei Zhang \cite{Wei-Zhang}, and this will allow us to obtain new cases of Howard's Main Conjecture (\cref{newresult}). We note that our approach is of a different nature than the recent results mentioned above, and that, to the author's knowledge, this is the first instance where one could deduce a main conjecture from primitivity conditions.

Secondly, we show how we can use our improvement in the formalism of Howard to determine, at least conjeturally, when the Kolyvagin system of Heegner points should be primitive. Kolyvagin observed in \cite[Theorem E]{Kolyvagin}, under mild hypothesis and for elliptic curves $E\slash \Q$ with analytic rank $1$ over $K,$ that, conditional on the $p$ part of the BSD formula, primitivity should happen exactly when certain Tamagawa factors are $p$-indivisible. We'll prove an analog of this for higher analytic rank, but conditional on Howard's Main Conjecture instead (\cref{Theorem-B}). This will be done by considering twists of the usual Kolyvagin system by finite order anticyclotomic characters. By the work of Cornut--Vatsal in \cite{Cornut-Vatsal}, we can choose such twist in a way that the Kolyvagin system has nonzero base class, and hence exploit a explicit reciprocity law for Heegner points to relate the primitivity of the Kolyvagin system with a special value of the BDP $p$-adic $L$-function.

The two results above actually amount to the equivalence between Howard's Main Conjecture and primitivity in certain cases (\cref{Theorem-C}).

\subsection{Main results}
Let $E/\Q$ be an elliptic curve and $K$ be an imaginary quadratic field with discriminant $D_K<-4.$ Let $N=N_E$ be the conductor of $E\slash\Q.$ We assume that $(N,D_K)=1.$ Let $N=N^+N^-$ be the factorization such that primes $l\mid N^+$ are split in $K$ and primes $l\mid N^-$ are inert in $K.$ We assume that $K$ satisfies the generalized Heegner Hypothesis
\begin{equation}[Heeg]\tag{Heeg}
    N^-\text{ is a square-free product of an even number of primes.}
\end{equation}

Let $p$ be a prime number $p\nmid D_K$ such that
\begin{equation}[good]\tag{good}
    E\text{ has good reduction at }p,
\end{equation}
\begin{equation}[p-big]\tag{$p$ big}
    p\ge 5,
\end{equation}
and denote by $T=T_pE$ the Tate module. We assume that
\begin{equation}[res-surj]\tag{res-surj}
    \text{the residual representation of }T\text{ is surjective.}
\end{equation}
As shown in \cite[Th\'eor\`em 2]{Serre}, this last condition is true for all but finitely many primes $p$ if $E$ does not have complex multiplication.

For certain arguments we will also need to assume that
\begin{equation}[split]\tag{split}
    p\text{ splits in }K,
\end{equation}
and that $p$ is not anomalous, that is,
\begin{equation}[ap-1]\tag{not anom}
    p\nmid \abs{\tilde{E}(\F_v)}\text{ for a place }v\mid p\text{ of }K.
\end{equation}
Note that \eqref{ap-1} is equivalent to
\begin{equation}
    \left\{\begin{array}{ll} p\nmid a_p-1&\text{if }p\text{ is split in }K,\\p\nmid a_p^2-1&\text{if }p\text{ is inert in }K,\end{array}\right.
\end{equation}
where $a_p$ is the usual trace of Frobenius $a_p\defeq p+1-\abs{\tilde{E}(\F_p)}.$

Let $K_\infty$ be the $\Z_p$-anticyclotomic extension of $K.$ Let $\Lambda\defeq\Z_p\grpring{\Gal{K_\infty}{K}}$ and denote by $\m=(p,\gamma-1)$ its maximal ideal, where $\gamma\in\Gal{K_\infty}{K}$ is any topological generator. We consider the $\Lambda$-modules $\T=T\otimes\Lambda,$ $\A=T\otimes\Lambda^{\vee}$ with Galois action on both factors, and $\Lambda$ action on the second factor.

If $M=T$ or $\T$ or $\A$ or a quotient of $T$ or $\T,$ and $\mathcal{F}=(\mathcal{F}_v)_v$ is a collection of submodules $\mathcal{F}_v\subseteq\Cohomology{1}{}{K_v}{M}$ for every place $v$ of $K,$ we denote
\begin{equation}
    \Cohomology{1}{\mathcal{F}}{K}{M}\defeq\Ker{\Cohomology{1}{}{K}{M}\to\bigoplus_v\frac{\Cohomology{1}{}{K_v}{M}}{\mathcal{F}_v}}
\end{equation}
to be the corresponding Selmer group.

We let $\BK$ denote the Bloch--Kato Selmer structure\footnote{In general, if $V$ is a finite dimensional $G_F$-representation over $\Q_p$ for some number field $F,$ the Bloch--Kato local conditions are unramified away from $p$ and $\Ker{\Cohomology{1}{}{F_v}{V}\to\Cohomology{1}{}{F_v}{V\otimes_{\Q_p}B_{\mathrm{cris}}}}$ for $v\mid p,$ and these local conditions are propagated to quotients and submodules of $V.$} for $T,$ whose local conditions in this case the exact annihilators under Tate duality of the images of the local Kummer maps. We also propagate these conditions to quotients of $T.$ Denote by $\kappa$ the usual Kolyvagin system of Heegner points for $\Cohomology{1}{\BK}{K}{T}.$ This is a collection of classes (see \Cref{kolyvagin-systems-section} for the precise definition)
\begin{equation}
    \kappa=\{\kappa_n\in\Cohomology{1}{\BK(n)}{K}{T/I_nT}\colon n\text{ square-free product of Kolyvagin primes}\}
\end{equation}
that can be constructed under the above assumptions together with the following \cref{assumption-bk}.

For a prime $w$ of $K,$ let $\F_w$ denote its residue field, and let $\mathcal{E}$ be the N\'eron model of $E$ over $K_w.$ Let $\pi_0(\mathcal{E}_0)$ be the group of connected components of the special fiber $\mathcal{E}_0$ of $\mathcal{E},$ and recall that it has a $\Gal{\overline{\F_w}}{\F_w}$ action. We denote by $c_w(E/K)\defeq \abs{\pi_0(\mathcal{E}_0)(\F_w)}$ the Tamagawa factor of $E/K$ at $w.$
\begin{assumption}\label{assumption-bk}
    Either $N^-=1$ or $p\nmid c_w(E/\Q)$ for all $w\mid N^+.$
\end{assumption}

\begin{remark}
    This assumption will only be used to ensure that the cohomology classes $\kappa_n\in\Cohomology{1}{}{K}{T/I_nT}$ and its twists lie in the $\BK(n)$ Selmer group, and can likely be removed, as explained in \cref{assumption-bk-remark}. We also note that such assumption is already implied by the hypothesis of \cite[Theorem 1.1]{Wei-Zhang}.
\end{remark}

We say that $\kappa$ is \emph{primitive} if its reduction modulo $p,$ denoted $\overline{\kappa}=\{\overline{\kappa}_n\in\Cohomology{1}{\BK(n)}{K}{T/pT}\},$ is nonzero.

We consider the Selmer group $\Cohomology{1}{\FLambda}{K}{\T}$ mentioned in the introduction. At primes $v\nmid p,$ we have $(\FLambda)_v=\Cohomology{1}{}{K_v}{\T}.$ For primes $v$ above $p,$ $(\FLambda)_v$ depends on the type of reduction of $p.$ For $p$ ordinary, this is the ordinary submodule as in \cite[Definition 3.2.2]{Howard2}. For $p$ supersingular, this is the $+$ submodule of \cite[Section 3.3]{Castella-Wan}. In the supersingular case, one could get different Selmer modules by choosing such local condition to be either $+$ or $-.$ It will be important that we work with the $+$ condition\footnote{Otherwise the proof of \cref{lambdaprimitive-ss} would not work.}. Now the Selmer group $\Cohomology{1}{\FLambda}{K}{\A}$ is defined to have local conditions to be the exact orthogonal complements of the above ones under Tate local duality.

By the work of Howard in the ordinary case and of Castella--Wan in the supersingular case, we know that there exist a $\Lambda$-adic Kolyvagin system $\kappa^{\mathrm{Hg}}$ for $\Cohomology{1}{\FLambda}{K}{\T}$ with $\kappa_1^{\mathrm{Hg}}\neq 0$ when the following is satisfied.
\begin{assumption}\label{assumption-1}
    Assume \eqref{Heeg}, \eqref{good}, \eqref{p-big} and \eqref{res-surj}. Furthermore, if $p$ is supersingular, assume also \eqref{split}.
\end{assumption}

We will recall the construction of such Kolyvagin systems in \Cref{kolyvagin-systems-section}.

Howard made the following conjecture in the ordinary case, and showed how $(1), (2)$ and one of the divisibilities of $(3)$ follow from the fact that $\kappa_1^{\mathrm{Hg}}\neq0$ (see \cite[Theorem B]{Howard} and \cite[Theorem 3.4.3]{Howard2}\footnote{Howard was not able to analyze the $p$ part of the characteristic ideals in the case when $N^-\neq1$ due to the factor $p^d$ in \cite[Lemma 3.4.1]{Howard2}, but as explained in \cite[Theorem 3.1]{BCK}, it is possible to take $p^d=1$ in such lemma.}). The same divisibility was extended to the supersingular case in \cite[Theorem 5.12]{Castella-Wan}.

\begin{specialtheorem}{Howard's Main Conjecture}\label{Howard-main}
Let $X=\Hom_{\Z_p}(\Cohomology{1}{\FLambda}{K}{\A},\Q_p/\Z_p).$ Then there is a torsion $\Lambda$-module $M$ such that:
\begin{enumerate}
    \item $\Char(M)=\Char(M)^\iota,$ where $\iota$ is the endormorphism induced by complex conjugation,
    \item $X\sim \Lambda\oplus M\oplus M,$
    \item $\Char(M)=\Char\left(\Cohomology{1}{\mathcal{F}_\Lambda}{K}{\T}/\Lambda\kappa_1^{\mathrm{Hg}}\right).$
\end{enumerate}
\end{specialtheorem}

\begin{remark}
We reinforce that, for the supersingular case, this is the $+$ version of \cite[Conjecture 1.2]{Castella-Wan}.
\end{remark}

We will prove the remaining divisibility of \cref{Howard-main} when $\kappa$ is primitive:
\begin{specialtheorem}{Theorem A}\label{Theorem-A}
    Suppose \cref{assumption-1}, \cref{assumption-bk}, \eqref{ap-1} and that $\kappa$ is primitive. Then we have that \cref{Howard-main} is true.
\end{specialtheorem}
This extends \cite[Theorem 1.6]{BCK} to cases where the analytic rank may be greater than $1,$ so we may also obtain new cases of \cref{Howard-main} by the work of Wei Zhang \cite[Theorem 1.1]{Wei-Zhang}, as done in \cite[Theorem 7.1]{BCK}, to obtain
\begin{theorem}\label{newresult}
    Let $E/\Q$ be an elliptic curve, $p$ a good ordinary prime number and $K$ a quadratic imaginary field with $D_K<-4,$ such that $p\nmid D_K$ and $(N_E,D_K)=1.$ Assume that \eqref{Heeg}, \eqref{p-big}, \eqref{res-surj} and \eqref{ap-1}. Assume also \cite[Hypothesis $\spadesuit$]{Wei-Zhang}, that is,
    \begin{enumerate}
        \item $E[p]$ is ramified at all primes $l\| N^+,$
        \item $E[p]$ is ramified at all primes $l\mid N^-$ with $l\equiv \pm1\mod p,$
        \item if $N$ is not square-free, then $E[p]$ is ramified at either $(a)$ at least one prime $l\mid N^-$, or $(b)$ at least in two primes $l\| N^+.$
    \end{enumerate}
    Then \cref{Howard-main} is true.
\end{theorem}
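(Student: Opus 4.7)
The plan is to derive this as a direct corollary of \cref{Theorem-A}, whose hypotheses I would verify in the setting of \cref{newresult}, together with Wei Zhang's primitivity theorem \cite[Theorem 1.1]{Wei-Zhang}. This parallels the deduction \cite[Theorem 7.1]{BCK} alluded to just before the statement.

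First I would match the running assumptions needed for \cref{Theorem-A}. Conditions \eqref{Heeg}, \eqref{p-big}, \eqref{res-surj}, and \eqref{ap-1} appear verbatim in both statements; \eqref{good} holds because $p$ is assumed good ordinary; and since we are squarely in the ordinary case, \eqref{split} is not required by \cref{assumption-1}. Thus \cref{assumption-1} is satisfied.

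Next I would check \cref{assumption-bk}. If $N^- = 1$ there is nothing to verify. Otherwise this is exactly the content of the remark following \cref{assumption-bk}: part (1) of Hypothesis $\spadesuit$ forces the residual representation $E[p]$ to be ramified at each $l \mid N^+$ with $l \| N$, and this ramification in turn forces $p \nmid c_l(E/\Q)$, which is the required Tamagawa condition at primes dividing $N^+$ (primes above $N^-$ are irrelevant for \cref{assumption-bk}). Together with part (3) of $\spadesuit$, which controls the case when $N$ is not squarefree, this yields $p \nmid c_w(E/\Q)$ for all $w \mid N^+$.

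The final ingredient is primitivity of the Heegner Kolyvagin system $\kappa$. This is precisely the conclusion of \cite[Theorem 1.1]{Wei-Zhang} under Hypothesis $\spadesuit$ and the other standing hypotheses of \cref{newresult}. With all hypotheses of \cref{Theorem-A} now in force, invoking it establishes \cref{Howard-main}. There is no genuine obstacle at this stage: all the work has been absorbed into \cref{Theorem-A} (i.e., the improved Kolyvagin system formalism of the paper) and into Wei Zhang's result, and the content of \cref{newresult} is simply a repackaging of their combination.
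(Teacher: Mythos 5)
Your proposal is correct and is essentially the paper's own (largely implicit) deduction: the theorem is obtained by feeding Wei Zhang's primitivity result \cite[Theorem 1.1]{Wei-Zhang} into \cref{Theorem-A}, after noting that Hypothesis $\spadesuit$ guarantees \cref{assumption-bk}, exactly as the remark following that assumption indicates. One small quibble: at primes $l$ with $l^2\mid N^+$ the bound $c_l(E/\Q)\le 4<p$ (from additive reduction and \eqref{p-big}) is what gives the Tamagawa condition, not part $(3)$ of $\spadesuit$, which is only needed as a hypothesis for Wei Zhang's theorem itself.
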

\begin{remark}\label{whatsnew}
    Note that unlike previous results on \cref{Howard-main}, we do not require $E$ to be semistable, and also allow the case of analytic rank greater than $1.$
\end{remark}

We'll also obtain a partial converse to \cref{Theorem-A}. We consider the condition
\begin{equation}[no-Tam]\tag{no split Tam}
    p\nmid c_w(E/K)\text{ for all }w\mid N^+.
\end{equation}

\begin{assumption}\label{assumption-2}
Assume \eqref{Heeg}, \eqref{good}, \eqref{split}, \eqref{p-big} and \eqref{res-surj}. Furthermore, assume that one of the following is satisfied:
\begin{enumerate}
    \item $p\nmid a_p(a_p-1)$ or
    \item $E/K$ has analytic rank $1.$
\end{enumerate}
\end{assumption}

We will then obtain
\begin{specialtheorem}{Theorem B}\label{Theorem-B}
     Assume that \cref{Howard-main} is true and that \cref{assumption-2} and \cref{assumption-bk} hold. Then $\kappa$ is primitive if and only if \eqref{no-Tam} holds.
\end{specialtheorem}

From \cref{Theorem-A}, \cref{Theorem-B} and \cite[Theorem 1.6]{BCK}, one readily obtains
\begin{specialtheorem}{Theorem C}\label{Theorem-C}
Assume \cref{assumption-2} and \cref{assumption-bk}. Then
\begin{equation}
    \kappa\text{ is primitive}\iff \text{both \cref{Howard-main} and \eqref{no-Tam} hold}.
\end{equation}
\end{specialtheorem}
\begin{proof}
\cite[Theorem 1.6]{BCK} covers the missing case of \cref{Theorem-A} where $E/K$ has analytic rank $1$ but $p\mid a_p-1,$ since in this case $p$ is automatically an ordinary prime.
\end{proof}

\subsection{Proof outline and organization of the paper}

\cref{Theorem-A} will follow from a improvement in Howard's formalism of conjugate self dual Kolyvagin Systems \cite{Howard} as mentioned in the introduction. Roughly, this improvement will consist in the following:

In \cite[Theorem 1.6.1]{Howard}, Howard establishes the inequality
\begin{equation}
    \length{M}\le \length{\Cohomology{1}{\mathcal{F}}{K}{T}\slash R\cdot \kappa_1}
\end{equation}
for a Kolyvagin system $\kappa$ over a discrete valuation ring $R.$ In the above, $T$ is a certain finitely generated $R\grpring{G_K}$-module and $M$ is such that $\Cohomology{1}{\mathcal{F}}{K}{A}\iso \mathrm{Frac}(R)\slash R\oplus M\oplus M$ for $A\defeq T\otimes_R\mathrm{Frac}(R)/R.$ This is what ultimately allows Howard to establish the corresponding divisibility in \cref{Howard-main}. We'll provide an error term for the above inequality
\begin{equation}[Howard-formula]
    \length{M}=\length{\Cohomology{1}{\mathcal{F}}{K}{T}\slash R\cdot \kappa_1}-d(\kappa),
\end{equation}
with the property that $d(\kappa)=0$ iff $\kappa$ is primitive. This mimicks the similar formula obtained in the Mazur--Rubin formalism \cite[Theorem 5.1.12(vii)]{Mazur-Rubin}, where, there, $d(\kappa)$ is replaced by $\partial^{(\infty)}(\kappa),$ the largest exponent $a$ of $p$ for which $\kappa$ is $p^a$-divisible. Although we do not know if $d(\kappa)$ must be equal to $\partial^{(\infty)}(\kappa)$ in general, they behave similarly enough that we will be able to deduce \cref{Howard-main} from the $\Lambda$-primitivity of $\kappa^{\mathrm{Hg}}$ in a similar way as in the Mazur--Rubin formalism \cite[Theorem 5.3.10(iii)]{Mazur-Rubin}. Such extension of Howard's formalism will be done in \Cref{Howard-section}.

To obtain \cref{Theorem-A} from the above, we will obtain the $\Lambda$-primitivity of $\kappa^{\mathrm{Hg}}$ from the primitivity of $\kappa.$ We note that this is the crucial place where \eqref{ap-1} will be necessary. This relation between different primitivity conditions will be established in \Cref{kolyvagin-systems-section}, where we finish the proof of \cref{Theorem-A}.

In other words, we will formalize the implications
\begin{equation}
    \kappa\text{ is primitive}\implies\kappa^{\mathrm{Hg}}\text{ is }\Lambda\text{-primitive}\implies \text{\cref{Howard-main} holds}
\end{equation}
alluded to in \cite[Remark 3.3]{BCK}.

For \cref{Theorem-B}, we note that for the case where $E/K$ has analytic rank $1,$ this is essentially already in \cite{BCK} by reversing their argument: by exploiting the close relationship between Howard Main Conjecture and the BDP Main Conjecture, we can specialize the latter in the trivial character via the anticyclotomic control theorem of \cite{JSW} together with the BDP formula \cite[Theorem 5.13]{BDP} to obtain (see \cite[Theorem 5.1]{BCK})
\begin{equation}[BCK-formula]
\mathrm{ord}_p(\abs{\Sha(E/K)[p^\infty]})=2\cdot\mathrm{ord}_p\Index{E(K)}{\Z\cdot y_K}-2\sum_{w\mid N^+}\mathrm{ord}_p(c_w(E\slash K)).
\end{equation}
But Kolyvagin's structure theorem \cite[Theorem D]{Kolyvagin} says that
\begin{equation}
\mathrm{ord}_p(\abs{\Sha(E/K)[p^\infty]})=2\cdot\mathrm{ord}_p\Index{E(K)}{\Z\cdot y_K}-2\cdot\partial^{(\infty)}(\kappa),
\end{equation}
and, thus, comparing these two formulas yield \cref{Theorem-B}, since $\partial^{(\infty)}(\kappa)=0\iff\kappa\text{ is primitive}.$

In order to allow for arbitrary analytic rank, we essentially follow the approach alluded to in \cite[Remark 1.8]{BCK}: we will prove an analog of \eqref{BCK-formula} in the case of a nontrivial anticyclotomic twist of $T$ (\cref{formula-index-2}), where the analog of Koyvagin's structure theorem is \eqref{Howard-formula}.

The analog of the special value formula \cite[Theorem 5.1]{BCK} will be proven in \Cref{control-section}, by using the anticyclotomic control theorem of Jetchev--Skinner--Wan \cite{JSW} for an anticyclotomic twist of the Galois representation of the elliptic curve.

In \Cref{main-section} we establish the relation between the Howard and BDP Main Conjectures, and use the extension of the BDP formula of \cite{Castella-Hsieh} to finish the proof of \cref{Theorem-B}.

\subsection*{Acknowledgments}
I want to thank Christopher Skinner for suggesting this investigation, for acting as my thesis advisor, and for innumerous insightful discussions throughout the preparation of this paper. I am also indebted to Francesc Castella for his interest and for multiple useful conversations about his previous works. Part of the research for this thesis was completed under the support of the 2018 Princeton Summer Research Program.

\section{Howard's Kolyvagin Systems}\label{Howard-section}
\subsection{Generalities}\label{section2.1}
We first briefly recall the setting and some notation from \cite{Howard}.

Let $K$ be a quadratic imaginary field and $p$ be a prime, and denote by $\tau\in G_\Q$ a complex conjugation. Denote by $K[n]$ the ring class field of conductor $n.$

Let $R$ be a coefficient ring (a complete, Noetherian, local ring with finite residue field of characteristic $p$) with maximal ideal $\m.$ We consider the category $\mathrm{Mod}_{R,K}$ of $R\grpring{G_K}$-modules that are unramified outside a finite set of places.

For $T\in\mathrm{Mod}_{R,K},$ we denote $\overline{T}=T/\m T,$ and consider the sets 
\begin{equation}
    \Sigma(T)\defeq\{v\text{ place of }K\colon v\mid p\infty\text{ or }T\text{ is ramified at }v\}    
\end{equation}
and $\mathcal{L}_0(T)\defeq\{\lambda\text{ inert prime of }K\}\setminus\Sigma(T).$

We recall some local conditions from \cite[Section 1.1]{Howard}. We let the strict, relaxed, finite and transverse conditions to be, respectively,
\begin{equation}
    \Cohomology{1}{*}{K_v}{T}\defeq\left\{\begin{array}{lll}0&\text{if}&*=\str\\\Cohomology{1}{}{K_v}{T}&\text{if}&*=\rel\\\Ker{\Cohomology{1}{}{K_v}{T}\to\Cohomology{1}{}{K_v^\ur}{T}}&\text{if}&*=\f\text{ and }v\not\in\Sigma(T)\\\Ker{\Cohomology{1}{}{K_v}{T}\to\Cohomology{1}{}{L[v]}{T}}&\text{if}&*=\tr\text{ and }v\in\mathcal{L}_0(T)\end{array}\right.
\end{equation}
where $L[\lambda],$ for a $\lambda\in\mathcal{L}_0(T)$ above a prime $l\in\Q,$ denotes the maximal $p$-subextension of $K[l]_\lambda/K_\lambda.$ We also denote $\Cohomology{1}{\s}{K_v}{T}\defeq\Cohomology{1}{}{K_v}{T}/\Cohomology{1}{\f}{K_v}{T}$ for $v\not\in\Sigma(T).$ We recall that when $v\in\mathcal{L}_0(T)$ we have a splitting $\Cohomology{1}{}{K_v}{T}=\Cohomology{1}{\f}{K_v}{T}\oplus \Cohomology{1}{\tr}{K_v}{T}$ (\cite[Proposition 1.1.9]{Howard}).

We consider Selmer triples $(T,\mathcal{F},\mathcal{L})$ as in \cite[Section 1.2]{Howard}. These consist of a collection of a $T\in\mathrm{Mod}_{R,K},$ local conditions $\mathcal{F}=(\mathcal{F}_v)_v$ where $\mathcal{F}_v\subseteq\Cohomology{1}{}{K_v}{T}$ are $\mathcal{F}_v=\Cohomology{1}{\f}{K_v}{T}$ when $v\not\in\Sigma(\mathcal{F})$ for some finite set $\Sigma(\mathcal{F})\supseteq\Sigma(T),$ and of a subset $\mathcal{L}\subseteq \mathcal{L}_0(T)$ that is disjoint from $\Sigma(\mathcal{F}).$ We denote by $\mathcal{N}=\mathcal{N}(\mathcal{L})$ the set of square-free products of primes in $\mathcal{L},$ with the convention that $1\in\mathcal{N}.$

We recall that given such a Selmer triple, we denote $\Cohomology{1}{\mathcal{F}}{K_v}{T}\defeq\mathcal{F}_v$ and 
\begin{equation}
    \Cohomology{1}{\mathcal{F}}{K}{T}\defeq\Ker{\Cohomology{1}{\mathcal{F}}{K}{T}\to\bigoplus_v\Cohomology{1}{}{K_v}{T}/\mathcal{F}_v}.
\end{equation}
We also recall that we propagate the Selmer condition $\mathcal{F}$ to submodules and quotients of $T$: If $0\to T'\xrightarrow{i} T\xrightarrow{q}T''\to 0,$ we denote $\Cohomology{1}{\mathcal{F}}{K_v}{T'}\defeq i^{-1}(\Cohomology{1}{\mathcal{F}}{K_v}{T})$ and $\Cohomology{1}{\mathcal{F}}{K_v}{T''}\defeq q(\Cohomology{1}{\mathcal{F}}{K_v}{T}).$

For a Selmer triple $(T,\mathcal{F},\mathcal{L})$ and a $n=abc\in\mathcal{N},$ we let the Selmer condition $\mathcal{F}_a^b(c)$ be, as usual, strict on divisors of $a,$ relaxed on divisors of $b$ and transverse on divisors of $c.$ As in \cite[Section 1.5]{Howard}, we denote
\begin{equation}
    \mathcal{H}_a^b(c)\defeq \Cohomology{1}{\mathcal{F}_a^b(c)}{K}{T}\quad\text{and}\quad\overline{\mathcal{H}_a^b}(c)\defeq \Cohomology{1}{\mathcal{F}_a^b(c)}{K}{\overline{T}}.
\end{equation}

Finally, we recall the definition of Kolyvagin systems for a Selmer triple $(T,\mathcal{F},\mathcal{L}).$ For a $l\mid\lambda\in\mathcal{L}_0(T)$, let $I_l$ denote the smallest ideal of $R$ containing $l+1$ and for which $\mathrm{Frob}_\lambda$ acts trivially on $T\slash I_lT.$ For $n\in\mathcal{N},$ we denote $I_n\defeq\sum_{l\mid n}I_l.$ These are such that for any $nl\in\mathcal{N}$ with $l$ a prime, there is a comparison isomorphism
\begin{equation}
    \phi_l^{\f\s}\colon\Cohomology{1}{\f}{K_l}{T\slash I_{nl}T}\rightiso \Cohomology{1}{\s}{K_l}{T\slash I_{nl}T}.
\end{equation}

\begin{definition}[{{\cite[Definition 1.2.3]{Howard}}}]
    Let $\KS(T,\mathcal{F},\mathcal{L})$ denote the $R$-module of Kolyvagin systems. Its elements are collections
    \begin{equation}
        \kappa=\{\kappa_n\in\Cohomology{1}{\mathcal{F}(n)}{K}{T\slash I_nT}\colon n\in \mathcal{N}\}
    \end{equation}
    such that for all $nl\in\mathcal{N}$ with $l$ a prime, we have
    \begin{equation}
        \loc{l}^\s(\kappa_{nl})=\phi_l^{\f\s}(\loc{l}(\kappa_n)),
    \end{equation}
    where $\loc{l}^\s$ denotes the composition
    \begin{equation}
        \Cohomology{1}{}{K}{T/I_{nl}T}\xrightarrow{\loc{l}}\Cohomology{1}{}{K_l}{T/I_{nl}T}\twoheadrightarrow\Cohomology{1}{\s}{K_l}{T/I_{nl}T}.
    \end{equation}
\end{definition}

We consider the following hypotheses for a pair $(T,\mathcal{F}),$ from \cite[Section 1.3]{Howard}:
\begin{enumerate}[label=(H.\arabic*),ref=H.\arabic*]
    \setcounter{enumi}{-1}
    \item \label{H.0}$T$ is a free, rank $2$ $R$-module,
    \item \label{H.1}$\overline{T}$ is an absolutely irreducible representation of $(R/\m)\grpring{G_K},$
    \item \label{H.2}there is a Galois extension $F/\Q$ such that $K\subseteq F,$ such that $G_F$ acts trivially on $T,$ and such that $\Cohomology{1}{}{F(\mu_{p^\infty})/K}{\overline{T}}=0,$
    \item \label{H.3}for every $v\in\Sigma(\mathcal{F}),$ the local condition on $\mathcal{F}$ is cartesian on $\mathrm{Quot}(T)$\footnote{This means that for any two ideals $I,J$ of $R$ and an injective map $\alpha\colon T/IT\to T/JT$ induced by multiplication with a $r\in R,$ we have $\Cohomology{1}{\mathcal{F}}{K_v}{T/IT}=\alpha^{-1}(\Cohomology{1}{\mathcal{F}}{K_v}{T/JT}).$ We note that this is automatically true for $v\not\in\Sigma(T)$ by \cite[Lemma 1.1.5]{Howard}. Although we will not directly use such \cref{H.3}, it is indirectly used in \cite[Lemma 1.3.3, Lemma 1.5.8]{Howard}},
    \item \label{H.4}there is a perfect, symmetric, $R$-bilinear pairing
    \begin{equation}
        (\ ,\ )\colon T\times T\to R(1)
    \end{equation}
    which satisfies $(s^\sigma,t^{\tau\sigma\tau^{-1}})=(s,t)^\sigma$ for every $s,t\in T$ and $\sigma\in G_K.$ We assume that the local condition $\mathcal{F}$ is its own exact orthogonal complement under the induced pairing
    \begin{equation}
        \left<\ ,\ \right>\colon\Cohomology{1}{}{K_v}{T}\times\Cohomology{1}{}{K_{\conj{v}}}{T}\to R
    \end{equation}
    for every place $v$ of $K,$
    \item \label{H.5}\begin{enumerate}[label=(\alph*)]
        \item the action of $G_K$ on $\overline{T}$ extends to an action of $G_{\Q}$ and the action of $\tau$ splits $\overline{T}=\overline{T}^+\oplus\overline{T}^-$ into one-dimensional eigenspaces,
        \item the condition $\mathcal{F}$ propagated to $\overline{T}$ is stable under the action of $G_{\Q},$
        \item the residual pairing $\overline{T}\times\overline{T}\to(R/\m)(1)$ satisfies $(s^\tau,t^\tau)=(s,t)^\tau$ for all $s,t\in T.$
    \end{enumerate}
\end{enumerate}

For this entire section, we assume that both $(T,\mathcal{F})$ and $(T^*,\mathcal{F}^*)$ satisfy the above hypotheses, where $T^*\defeq\Hom(T,R(1)),$ and $(\mathcal{F}^*)_v$ is the exact orthogonal complement of $\mathcal{F}_v$ under Tate local duality.

\subsection{Principal Artinian rings}
Throughout this subsection, we let $R$ be a principal Artinian ring of length $k.$ We assume that $\mathcal{L}\subseteq\mathcal{L}_k(T)\defeq\{v\in\mathcal{L}_0(T)\colon I_l\subseteq \m^kR\}.$ In particular, this implies that $I_nR=0$ for all $n\in\mathcal{N}.$

The goal of this section is to establish the notion of core vertices, which are the $n\in\mathcal{N}$ with $\H(n)\iso R,$ and to prove that the isomorphism class of the ideal $\kappa_nR$ does not depend on the core vertex $n$ when $p>4.$

Recall that $\H(n)$ denotes $\Cohomology{1}{\mathcal{F}(n)}{K}{T}.$ In this case, Howard constructed a generalized form of a Cassels--Tate pairing, and proved:
\begin{lemma}[{{\cite[Lemma 1.5.1]{Howard}}}]\label{H(n)-structure}
    There is an $\epsilon\in\{0,1\}$ and modules $M(n)$ such that
    \begin{equation}
        \mathcal{H}(n)\iso R^\epsilon\oplus M(n)\oplus M(n)
    \end{equation}
    for all $n\in\mathcal{N}.$
\end{lemma}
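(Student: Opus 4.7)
The plan is to introduce a generalized Cassels--Tate pairing on $\mathcal{H}(n)$, show that it is alternating, and then invoke the classification of alternating forms over the principal Artinian ring $R$.

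First, I would build a bilinear pairing
\[
\langle\cdot,\cdot\rangle_n \colon \mathcal{H}(n) \times \mathcal{H}(n) \longrightarrow R
\]
out of the conjugate self-dual pairing $T \otimes T \to R(1)$ of \cref{H.4}, combined with the $\tau$-twisted identification of $H^1(K_{\bar v},T)$ with $H^1(K_v,T)$ coming from complex conjugation, together with Tate local duality. Concretely: for $c_1, c_2 \in \mathcal{H}(n)$, lift them to cochains, cup them together, and read off the global obstruction in $R$ using the fact that $\mathcal{F}(n)$ is its own exact orthogonal complement at every place of $K$---at $v \notin \Sigma(\mathcal{F}) \cup \{l \mid n\}$ this is \cref{H.4}, while at the transverse primes $l \mid n$ it is \cite[Proposition 1.1.9]{Howard}.

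Second, I would show this pairing is alternating. This uses the graded commutativity of the cup product, which produces the sign $(-1)^{1\cdot 1}=-1$ when cupping two classes in $H^1$, combined with the $\tau$-equivariance of the pairing in \cref{H.4} and the fact that $\tau$ acts by $-1$ on $R(1)$ (by inversion on roots of unity). The conclusion is that $\langle c_2, c_1 \rangle_n = -\langle c_1, c_2 \rangle_n$, and since $p \geq 5$, we further obtain $\langle c, c \rangle_n = 0$ for every $c \in \mathcal{H}(n)$.

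Third, I would apply the structure theorem for alternating bilinear forms on finitely generated modules over a principal Artinian ring: any such module decomposes orthogonally as a radical plus a sum of hyperbolic planes $R/\m^{a} \oplus R/\m^{a}$. Writing $\mathcal{H}(n) = K_n \oplus H_n$ with $K_n$ the radical of $\langle\cdot,\cdot\rangle_n$, this yields $H_n \iso M(n) \oplus M(n)$ where $M(n)$ is a Lagrangian half. Finally, I would analyze $K_n$: by Poitou--Tate global duality and the self-orthogonality of $\mathcal{F}(n)$, the radical is controlled by a global Selmer group which, under \cref{H.1}--\cref{H.5} and a global Euler-characteristic count, is cyclic of length either $0$ or $k = \mathrm{length}(R)$, giving $K_n \iso R^{\epsilon_n}$ with $\epsilon_n \in \{0,1\}$. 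Independence of $\epsilon_n$ from $n$ then follows by induction, since passing from $n$ to $n\ell$ via the finite/transverse comparison maps only alters the hyperbolic summands.

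The main obstacle is rigorously establishing the alternating property of the pairing in the conjugate self-dual setting: the $\tau$-twist of \cref{H.4} must be threaded carefully through the cup product and Tate local duality to produce the right sign. A secondary challenge is identifying $K_n$ as cyclic, which rests on a global Euler-characteristic count enabled by \cref{H.1} and \cref{H.2}.
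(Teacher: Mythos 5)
This lemma is not proved in the paper at all—it is quoted verbatim from Howard, and the sentence preceding it already records that Howard's proof goes through a generalized Cassels--Tate pairing—so your outline reproduces essentially the argument of the cited source: a pairing on $\mathcal{H}(n)$ built from the conjugate self-dual pairing of \eqref{H.4} together with the self-orthogonality of $\mathcal{F}(n)$ at every place (including the transverse places $l\mid n$), shown to be antisymmetric and hence alternating since $p$ is odd, with the structure theory of alternating forms over a principal Artinian ring producing the $M(n)\oplus M(n)$ part and a free radical of rank $\epsilon\le 1$. The one step where your description is looser than the actual construction is the pairing itself: the naive sum of local cup products of two Selmer classes vanishes identically by global reciprocity and the self-orthogonal local conditions, so the pairing must be defined as a secondary obstruction by lifting through a length-$2k$ coefficient ring (this is Howard's Theorem 1.4.2, and it is also what identifies the radical as the image of the Selmer group of the lifted module, whence its freeness); your ``read off the global obstruction'' is the right idea but carries the real technical weight.
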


Together with global duality, the existence of this pairing also implies that:

\begin{lemma}[{{\cite[Lemma 1.5.8]{Howard}}}]\label{Lemma-1.5.8}
There are $a,b$ and $\delta$ greater than or equal to $0$ such that, in the following diagram, the cokernel of each inclusion is a direct sum of two cyclic $R$-modules of the indicated lengths.
\begin{equation}
    \begin{tikzcd}
    &\H^l(n)&\\
    \H(n)\arrow[hook]{ur}{k-a,\ k-b}&&\H(nl)\arrow[hook']{lu}[swap]{a+\delta,\ b+\delta}\\
    &\H_l(n)\arrow[hook']{lu}{a,\ b}\arrow[hook]{ru}[swap]{k-a-\delta,\ k-b-\delta}&
    \end{tikzcd}
\end{equation}
\end{lemma}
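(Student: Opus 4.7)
The plan is to realize all four Selmer groups in the diagram as subquotients of a single Lagrangian inside the local cohomology at $\lambda$, and then read off the cokernel lengths using the structure theorem over the principal Artinian ring $R$.

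I would first analyze $\Cohomology{1}{}{K_\lambda}{T}$. Since $\lambda\in\mathcal{L}_k(T)$ and $R$ has length $k$, we have $I_l=0$, so $\mathrm{Frob}_\lambda$ acts trivially on $T$ and $l\equiv -1$ in $R$; a direct computation then shows $\Cohomology{1}{}{K_\lambda}{T}$ is free of rank $4$ over $R$ and decomposes as $\Cohomology{1}{\f}{K_\lambda}{T}\oplus\Cohomology{1}{\tr}{K_\lambda}{T}$ with each summand free of rank $2$. Tate local duality together with (H.4) makes this decomposition into a perfect hyperbolic pair, each summand its own annihilator. Poitou--Tate global duality for $(T,\mathcal{F}^l)$, combined with the self-orthogonality of $\mathcal{F}$ at every place (from (H.4)), then shows that $W\defeq\loc{\lambda}(\H^l(n))$ is a maximal isotropic submodule of length $2k$. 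Since $\ker(\loc{\lambda}|_{\H^l(n)})=\H_l(n)$, we obtain $\H^l(n)/\H_l(n)\iso W$, with $\H(n)/\H_l(n)\iso W\cap\Cohomology{1}{\f}{K_\lambda}{T}$ and $\H(nl)/\H_l(n)\iso W\cap\Cohomology{1}{\tr}{K_\lambda}{T}$.

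Both intersections are submodules of $R^2$, hence direct sums of at most two cyclic modules; call their elementary divisors $(a,b)$ and $(c_1,c_2)$. The complementary cokernels $\H^l(n)/\H(n)$ and $\H^l(n)/\H(nl)$ inject via projection into $\Cohomology{1}{\tr}{K_\lambda}{T}$ and $\Cohomology{1}{\f}{K_\lambda}{T}$ respectively. The Lagrangian relation $(W\cap\Cohomology{1}{\f}{K_\lambda}{T})^\perp=\pi_{\tr}(W)$, combined with the annihilator-vs-cokernel formula over principal Artinian $R$ (whereby a submodule of $R^2$ with elementary divisors $(e_1,e_2)$ has annihilator in the dual $R^2$ with divisors $(k-e_1,k-e_2)$), immediately gives $\H^l(n)/\H(n)\iso R/\m^{k-a}\oplus R/\m^{k-b}$, matching the claimed cokernel lengths for $\H(n)\hookrightarrow\H^l(n)$.

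The crux of the lemma is showing $(c_1,c_2)=(k-a-\delta,k-b-\delta)$ with a single common shift $\delta\ge 0$, not two independent shifts. For this I would invoke \cref{H(n)-structure}: both $\H(n)$ and $\H(nl)$ carry the doubled shape $R^\epsilon\oplus M\oplus M$ coming from Howard's generalized Cassels--Tate pairing, and this doubling propagates to the intersections $W\cap\Cohomology{1}{\f}{K_\lambda}{T}$ and $W\cap\Cohomology{1}{\tr}{K_\lambda}{T}$, pinning down their elementary divisors to be aligned with a common offset. The remaining cokernel length $(a+\delta,b+\delta)$ for $\H(nl)\hookrightarrow\H^l(n)$ then follows from the length identity $|W|=2k$ and the same Lagrangian annihilator-vs-projection formula. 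The main obstacle is precisely this alignment step: a generic Lagrangian in a hyperbolic $R^4$ can have intersections with the two summands possessing entirely unrelated elementary divisors (explicit examples arise already over $R=\Z/4$), so the single-$\delta$ conclusion is not formal and really uses the extra Cassels--Tate symmetry encoded in \cref{H(n)-structure} (and, indirectly, hypotheses (H.4)--(H.5)).
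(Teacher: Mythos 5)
This is a lemma the paper does not prove: it is imported verbatim from Howard (\cite[Lemma 1.5.8]{Howard}), so there is no in-paper argument to compare against; your proposal has to be measured against Howard's original proof. Your first half is correct and is indeed the standard opening move: Poitou--Tate plus the self-duality of $\mathcal{F}$ from \eqref{H.4} shows $W=\loc{\lambda}(\H^l(n))$ is its own exact annihilator in $\Cohomology{1}{}{K_\lambda}{T}\iso R^4$, the four cokernels are identified with $W\cap\Cohomology{1}{\f}{K_\lambda}{T}$, $W\cap\Cohomology{1}{\tr}{K_\lambda}{T}$ and the two complementary projections, and the annihilator computation over the principal Artinian ring correctly yields the $(a,b)\leftrightarrow(k-a,k-b)$ and $(c_1,c_2)\leftrightarrow(k-c_1,k-c_2)$ pairings. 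You are also right that the single common shift $\delta$ is the crux and is not formal: e.g.\ over $R=\Z/4$ the Lagrangian $W=Re_1+R(2e_2+f_2)$ in the hyperbolic $R^4$ has $W\cap H_f\iso R$ and $W\cap H_{\mathrm{tr}}\iso R/2$, which is incompatible with any $\delta\ge0$.

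The gap is that your proposed mechanism for closing this step does not work. The only constraint that \cref{H(n)-structure} imposes here is on lengths modulo $2$: since $\length{\H(n)}\equiv\length{\H(nl)}\equiv k\epsilon\pmod 2$, one gets $a+b\equiv c_1+c_2\pmod 2$, which is far from forcing $c_1+a=c_2+b$. Nor does the "doubling propagate to the intersections'': $W\cap\Cohomology{1}{\f}{K_\lambda}{T}\iso\H(n)/\H_l(n)$ is a quotient of $\H(n)$ by a submodule with no reason to respect the $M\oplus M$ decomposition (at a core vertex $\H(n)\iso R$ and this quotient is cyclic). What Howard's proof actually uses to align the elementary divisors is the conjugate self-dual structure \emph{at the local place}: since $\lambda$ is inert, complex conjugation together with the $\tau$-compatibility of the pairing in \eqref{H.4} and the eigenspace structure from \eqref{H.5} splits the local picture into two dually paired pieces of half the rank, and in that reduced (rank-one per eigenspace) situation the two intersections are forced to be annihilators of each other up to a single twist, which is exactly where $\delta$ comes from. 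Without some input of this kind --- i.e.\ using only that $W$ is a Lagrangian in a hyperbolic $R^4$ together with the global Cassels--Tate doubling --- the statement is false, as the example above shows, so your proof as written does not establish the lemma.
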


\begin{definition}[{{\cite[Definitions 1.5.2, 1.5.4]{Howard}}}]
    For $n\in\mathcal{N},$ let $\rho(n)^\pm\defeq\dim_{R/\m}\left(\overline{\mathcal{H}}(n)^\pm\right),$ and $\rho(n)\defeq\rho(n)^++\rho(n)^-.$ Finally, let $\lambda(n)\defeq \length{M(n)}$ and $\mathcal{S}(n)\defeq \m^{\lambda(n)}\mathcal{H}(n).$
\end{definition}

The above lemma readily implies the following.

\begin{corollary}\label{lambda-relation}
    For $a,b,\delta$ as in \cref{Lemma-1.5.8}, we have
    \begin{equation}
        \lambda(nl)=\lambda(n)+k-a-b-\delta.
    \end{equation}
\end{corollary}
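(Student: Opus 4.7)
The plan is to compute lengths around the diagram of \cref{Lemma-1.5.8} and then use the structure theorem of \cref{H(n)-structure} to extract $\lambda(n)$ and $\lambda(nl)$.

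First, I would read off from the diagram in \cref{Lemma-1.5.8} the two length identities coming from the two ``sides'' of the square:
\begin{equation}
    \length{\H(n)} = \length{\H_l(n)} + (a+b),
\end{equation}
\begin{equation}
    \length{\H(nl)} = \length{\H_l(n)} + (k-a-\delta) + (k-b-\delta),
\end{equation}
since in each case the cokernel of the stated inclusion is a sum of two cyclic $R$-modules of the indicated lengths. Subtracting yields
\begin{equation}
    \length{\H(nl)} - \length{\H(n)} = 2(k-a-b-\delta).
\end{equation}

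Next, I would feed in the structural decomposition from \cref{H(n)-structure}. The key point is that the constant $\epsilon \in \{0,1\}$ appearing there does not depend on $n$; it is a single integer that works simultaneously for all $n\in\mathcal N$ (this is how Howard states the lemma). Hence
\begin{equation}
    \length{\H(n)} = \epsilon\cdot k + 2\lambda(n),\qquad \length{\H(nl)} = \epsilon\cdot k + 2\lambda(nl),
\end{equation}
and the $\epsilon\cdot k$ terms cancel in the difference. Combining with the displayed equation above gives
\begin{equation}
    2(\lambda(nl) - \lambda(n)) = 2(k - a - b - \delta),
\end{equation}
and dividing by $2$ (which is permissible inside $\Z$, not inside $R$) yields the desired equality.

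There is no real obstacle here: the result is essentially a bookkeeping consequence of \cref{Lemma-1.5.8} and \cref{H(n)-structure}. The only point worth underlining in the write-up is that the integer $\epsilon$ is a global invariant of the Selmer triple, independent of $n$, so that it disappears on passage from $n$ to $nl$; without that remark, one would only be able to conclude the identity up to an ambiguity of $\tfrac{k}{2}$.
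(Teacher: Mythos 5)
Your proof is correct and follows exactly the same route as the paper: extract the two length identities from the bottom inclusions of \cref{Lemma-1.5.8}, subtract, and cancel the $\epsilon\cdot k$ terms using \cref{H(n)-structure}. Your explicit remark that $\epsilon$ is independent of $n$ is a point the paper uses silently, and it is worth keeping.
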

\begin{proof}
    The two bottom inclusions in \cref{Lemma-1.5.8} give us
    \begin{equation}
    \begin{split}
        \length{\H(n)}&= \length{\H_l(n)}+a+b\quad\text{and}\\
        \length{\H(nl)}&= \length{\H_l(n)}+(k-a-\delta)+(k-b-\delta).
    \end{split}
    \end{equation}
    Since $\length{\H(n)}=k\epsilon + 2\lambda(n)$ and $\length{\H(nl)}=k\epsilon + 2\lambda(nl)$ by \cref{H(n)-structure}, subtracting the two equations above give us
    \begin{equation}
        2\lambda(n)-2\lambda(nl)=a+b-(k-a-\delta)-(k-b-\delta)=2(a+b+\delta-k)
    \end{equation}
    from which the claim follows.
\end{proof}

The next proposition is a refinement of \cite[Proposition 1.5.9]{Howard}.

\begin{proposition}\label{localization-lemma}
For $nl\in\mathcal{N}$ and $d\ge 0,$ we have
\begin{equation}
    \loc{l}(\m^d\mathcal{S}(n))=0\iff \loc{l}(\m^d\mathcal{S}(nl))=0.
\end{equation}
\end{proposition}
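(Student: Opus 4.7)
The plan is to translate both vanishing conditions in the statement into the same explicit numerical inequality in $d$, $\lambda(n)$, and the parameters $a, b, \delta$ coming from \cref{Lemma-1.5.8}.

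The key observation is that the kernel of $\loc{l}$ on both $\H(n)$ and $\H(nl)$ equals the strict Selmer group $\H_l(n) = \H_l(nl)$: any class with trivial local image at $l$ automatically satisfies both the finite and transverse conditions there. Therefore $\loc{l}(\m^d\mathcal{S}(n)) = 0$ is equivalent to $\m^{d+\lambda(n)}\H(n) \subseteq \H_l(n)$, and similarly $\loc{l}(\m^d\mathcal{S}(nl)) = 0$ is equivalent to $\m^{d+\lambda(nl)}\H(nl) \subseteq \H_l(n)$.

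By \cref{Lemma-1.5.8}, the cokernel of $\H_l(n) \hookrightarrow \H(n)$ is isomorphic to $R/\m^a \oplus R/\m^b$, so the first containment holds iff $d + \lambda(n) \ge \max(a,b)$. Similarly, since $\H(nl)/\H_l(n) \iso R/\m^{k-a-\delta} \oplus R/\m^{k-b-\delta}$, the second containment holds iff $d + \lambda(nl) \ge k - \delta - \min(a,b)$. Substituting $\lambda(nl) = \lambda(n) + k - a - b - \delta$ from \cref{lambda-relation}, this last inequality reduces to $d + \lambda(n) \ge a + b - \min(a,b) = \max(a,b)$, matching the first.

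I do not expect any real obstacle. The whole argument is essentially bookkeeping once the two kernels of $\loc{l}$ are identified with the single submodule $\H_l(n)$; the cokernel structure from \cref{Lemma-1.5.8} converts the vanishing conditions to linear inequalities, and \cref{lambda-relation} is exactly what is needed to make these two inequalities coincide.
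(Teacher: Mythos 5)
Your proof is correct and follows essentially the same route as the paper: both arguments use \cref{Lemma-1.5.8} to translate each vanishing condition into an inequality ($d+\lambda(n)\ge\max(a,b)$ and $d+\lambda(nl)\ge\max(k-a-\delta,k-b-\delta)$, respectively) and then invoke \cref{lambda-relation} to see the two inequalities coincide. Your writeup simply makes explicit the intermediate step that both kernels of $\loc{l}$ are the strict Selmer group $\H_l(n)$, which the paper leaves implicit.
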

\begin{proof}
By \cref{Lemma-1.5.8}, the first occurs if and only if $\lambda(n)+d\ge \max(a,b)$ and the second occurs if and only if $\lambda(nl)+d\ge\max(k-a-\delta,k-b-\delta).$ Assume without loss of generality that $a\ge b.$ Now our proposition becomes
\begin{equation}
    \lambda(nl)+d\ge k-b-\delta\iff \lambda(n)+d\ge a,
\end{equation}
which is clearly true by  \cref{lambda-relation}.
\end{proof}

We assume from now on that
\begin{equation}[epsilon-1]\tag{odd rank}
    \epsilon = 1.
\end{equation}
By \cite[Proposition 1.5.5]{Howard}, this is equivalent to $\rho(n)\equiv1\mod 2$ for all $n\in\mathcal{N}.$

\begin{definition}
We call $n\in\mathcal{N}$ a core vertex if $\rho(n)=1.$ Note that, under \eqref{epsilon-1}, this is equivalent to $M(n)=0$ and also to $\H(n)\iso R.$ Denote by $\mathcal{X}$ the graph whose vertices are the core vertices, and where we have an edge between vertices $n$ and $nl$ for $nl\in\mathcal{N}$ and $l$ a prime.
\end{definition}

We will use the following lemma repeatedly in what follows.

\begin{lemma}[{{\cite[Lemma 1.5.3]{Howard}}}]\label{parity-lemma}
    For any $nl\in\mathcal{N}$ with $l$ a prime,
    \begin{enumerate}
        \item if $\loc{l}\left(\overline{\H}(n)^\pm\right)\neq 0,$ then $\rho(nl)^\pm=\rho(n)^\pm-1$ and $\loc{l}\left(\overline{\H}(nl)^\pm\right)=0,$
        \item if $\loc{l}\left(\overline{\H}(n)^\pm\right)=0,$ then $\rho(nl)^\pm=\rho(n)^\pm+1.$
    \end{enumerate}
\end{lemma}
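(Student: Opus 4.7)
The proof plan proceeds in three phases, comparing how the Selmer groups $\overline{\H}(n)$ and $\overline{\H}(nl)$ differ at $l$ while tracking the $\tau$-eigenspace structure.

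First, I would carry out a local cohomology computation at the inert Kolyvagin prime $l \in \mathcal{L}_k(T)$. Since $l+1 \in I_l \subseteq \m$ we have $l \equiv -1 \pmod{p}$, and $\mathrm{Frob}_l$ acts trivially on $\overline{T}$. Using the evaluations $\Cohomology{1}{\f}{K_l}{\overline{T}} \iso \overline{T}$ at $\mathrm{Frob}_l$ and $\Cohomology{1}{\tr}{K_l}{\overline{T}} \iso \overline{T}$ at a tame inertia generator $\sigma_l$, one computes the $\tau$-action: because $\tau$ lifts the generator of $\Gal{K_l}{\Q_l}$, so $\tau^2 = \mathrm{Frob}_l$ modulo inertia, we get $\tau \mathrm{Frob}_l \tau^{-1} = \mathrm{Frob}_l$ and $\tau \sigma_l \tau^{-1} = \sigma_l^l \equiv \sigma_l^{-1}$ in the pro-$p$ part of tame inertia. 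This yields $\Cohomology{1}{\f}{K_l}{\overline{T}}^{\pm} \iso \overline{T}^{\pm}$ and $\Cohomology{1}{\tr}{K_l}{\overline{T}}^{\pm} \iso \overline{T}^{\mp}$, each one-dimensional over $R/\m$ by \cref{H.5}(a). Combined with the defining sequences
\begin{equation}
    0 \to \overline{\H}_l(n)^{\pm} \to \overline{\H}(n)^{\pm} \xrightarrow{\loc{l}} \Cohomology{1}{\f}{K_l}{\overline{T}}^{\pm},
\end{equation}
\begin{equation}
    0 \to \overline{\H}_l(n)^{\pm} \to \overline{\H}(nl)^{\pm} \xrightarrow{\loc{l}} \Cohomology{1}{\tr}{K_l}{\overline{T}}^{\pm},
\end{equation}
each image of $\loc{l}$ on a $\pm$-eigenspace is either zero or surjective onto its one-dimensional target, reducing the problem to controlling which alternative occurs.

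The key global input is Poitou--Tate duality together with the self-duality from \cref{H.4}: applied to the Selmer pair $(\mathcal{F}(n)_l, \mathcal{F}(n)^l)$ (which are exchanged by the duality since strict and relaxed are dual, while transverse and finite are each self-dual), the resulting four-term exact sequence identifies $W \defeq \loc{l}(\overline{\H}^l(n)) \subseteq \Cohomology{1}{}{K_l}{\overline{T}}$ as its own orthogonal complement under local Tate pairing, hence as a Lagrangian subspace of dimension two. Because the pairing factors through $\Cohomology{2}{}{K_l}{(R/\m)(1)}$ on which $\tau$ acts by $-1$, it is skew-$\tau$-equivariant, forcing $\Cohomology{1}{}{K_l}{\overline{T}}^{+}$ and $\Cohomology{1}{}{K_l}{\overline{T}}^{-}$ to each be isotropic (and hence themselves complementary Lagrangians in the four-dimensional $\Cohomology{1}{}{K_l}{\overline{T}}$). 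The Lagrangian $W$ then decomposes as $W = W^+ \oplus W^-$ with $W^-$ the exact orthogonal complement of $W^+$ under the non-degenerate cross pairing, and $\dim W^+ + \dim W^- = 2$. Intersecting $W^{\pm}$ with the pair of one-dimensional Lagrangian directions $\Cohomology{1}{\f}{K_l}{\overline{T}}^{\pm}$ and $\Cohomology{1}{\tr}{K_l}{\overline{T}}^{\pm}$ — which are themselves complementary in $\Cohomology{1}{}{K_l}{\overline{T}}^\pm$ — extracts the dichotomy of case (1): if $\loc{l}(\overline{\H}(n)^{\pm}) \neq 0$, the one-dimensional $W^\pm$ must coincide with the finite direction, so cannot also meet the transverse direction.

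For case (2), once $\loc{l}(\overline{\H}(n)^{\pm}) = 0$ so $\overline{\H}(n)^{\pm} = \overline{\H}_l(n)^{\pm}$, the Lagrangian constraint together with the forced nonvanishing on the other $\mp$-eigenspace (to keep $\dim W = 2$) shows the transverse image $\loc{l}(\overline{\H}(nl)^{\pm})$ picks up a one-dimensional contribution, giving $\rho(nl)^{\pm} = \rho(n)^{\pm} + 1$. The main obstacle will be consistently tracking the $\pm$-sign matching throughout the global duality argument: because the self-duality pairing takes values in $(R/\m)(1)$, on which $\tau$ acts as $-1$, naive dualities interchange the $\pm$-signs, and the fact that the lemma preserves the sign on both sides reflects a compensating sign coming from the $\mathrm{f} \leftrightarrow \mathrm{tr}$ swap of the $\tau$-eigenspaces established in the local computation.
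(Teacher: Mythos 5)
The paper gives no proof of this lemma at all: it is imported verbatim from Howard, so I am comparing your reconstruction against \cite[Lemma 1.5.3]{Howard}'s argument. Your first two phases are the right ones and essentially match Howard's: the eigenspace computation $\Cohomology{1}{\f}{K_l}{\overline{T}}^{\pm}\iso\overline{T}^{\pm}$ and $\Cohomology{1}{\tr}{K_l}{\overline{T}}^{\pm}\iso\overline{T}^{\mp}$ (each one-dimensional), and the identification via Poitou--Tate duality of $W\defeq\loc{l}\bigl(\overline{\H^l}(n)\bigr)$ as a two-dimensional Lagrangian subspace of the four-dimensional $\Cohomology{1}{}{K_l}{\overline{T}}$.

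The third phase, however, contains a genuine error on which the whole deduction turns. You assert that $\tau$ acts by $-1$ on $\Cohomology{2}{}{K_l}{(R/\m)(1)}$ and conclude that $\Cohomology{1}{}{K_l}{\overline{T}}^{+}$ and $\Cohomology{1}{}{K_l}{\overline{T}}^{-}$ are each isotropic Lagrangians. In fact $\tau$ acts \emph{trivially} on $\Cohomology{2}{}{K_l}{(R/\m)(1)}\iso R/\m$: while $\tau$ does act by $-1$ on the coefficient module $(R/\m)(1)$, the induced action on local $\mathrm{H}^2$ also involves conjugation on $G_{K_l}$, and the local invariant map is preserved by every automorphism of $K_l$. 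Hence the duality pairing is honestly $\tau$-equivariant, the two eigenspaces $\Cohomology{1}{}{K_l}{\overline{T}}^{\pm}$ are mutually \emph{orthogonal} and each \emph{non-degenerately} self-paired, and $\Cohomology{1}{\f}{K_l}{\overline{T}}^{\pm}$, $\Cohomology{1}{\tr}{K_l}{\overline{T}}^{\pm}$ are the only two isotropic lines of the hyperbolic plane $\Cohomology{1}{}{K_l}{\overline{T}}^{\pm}$. This is precisely the structure the lemma needs: the Lagrangian $W=W^{+}\oplus W^{-}$ must then have each $W^{\pm}$ isotropic in a non-degenerate two-dimensional space, forcing $\dim W^{+}=\dim W^{-}=1$ and $W^{\pm}\in\bigl\{\Cohomology{1}{\f}{K_l}{\overline{T}}^{\pm},\Cohomology{1}{\tr}{K_l}{\overline{T}}^{\pm}\bigr\}$, from which (1) and (2) are immediate. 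Under your (isotropic-eigenspace) structure the conclusion genuinely fails at the linear-algebra level: the Lagrangian condition would then allow $W=\Cohomology{1}{}{K_l}{\overline{T}}^{-}$ (so $W^{+}=0$, violating case (2) for the $+$ sign) or allow $W^{\pm}$ to be a ``diagonal'' line meeting neither the finite nor the transverse direction. Your unproved assertions that $W^{\pm}$ is one-dimensional and ``must coincide with the finite direction'' are exactly the points that the correct non-degeneracy statement is needed to supply.
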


We will accomplish our goal to prove that the ideal $\kappa_nR$ does not depend on the core vertex $n$ by proving: $(1)$ that the graph $\mathcal{X}$ is connected; $(2)$ that $\kappa_nR\iso\kappa_{nl}R$ for neighboring vertices $n,nl\in\mathcal{X}.$ We first see how we prove the latter.

\begin{proposition}\label{edges-lemma}
Let $n$ be a core vertex and $nl\in\mathcal{N}$ for a prime $l.$ Then the following are equivalent:
\begin{enumerate}
    \item $nl$ is a core vertex,
    \item $\loc{l}\colon \H(n)\rightarrow \Cohomology{1}{}{K_l}{T}$ is injective,
    \item $\loc{l}\colon\H(nl)\rightarrow \Cohomology{1}{}{K_l}{T}$ is injective,
    \item $\loc{l}\overline{\H}(n)\neq 0,$
    \item $\loc{l}\overline{\H}(nl)\neq 0.$
\end{enumerate}
Moreover, in the case where $(1)-(5)$ hold, we have a well-defined isomorphism
\begin{equation}
    (\overline{\loc{l}^\s})^{-1}\circ\overline{\phi_l^{\f\s}}\circ\overline{\loc{l}}\colon \overline{\H}(n)\rightiso\overline{\H}(nl),
\end{equation}
where we recall that $\overline{\loc{l}^\s}$ denotes the composition $\Cohomology{1}{}{K}{\overline{T}}\xrightarrow{\overline{\loc{l}}}\Cohomology{1}{}{K_l}{\overline{T}}\twoheadrightarrow\Cohomology{1}{\s}{K}{\overline{T}}.$
\end{proposition}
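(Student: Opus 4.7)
The plan is to combine the length bookkeeping of \cref{Lemma-1.5.8}, a mod-$\pi$ comparison between $T$- and $\overline{T}$-coefficient cohomology, and the parity lemma \cref{parity-lemma}.

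For the setup: since $n$ is a core vertex, $\H(n) \iso R$ is cyclic of length $k$. The quotient $\H(n)/\H_l(n)$ is a direct sum of two cyclic $R$-modules of lengths $a$ and $b$ by \cref{Lemma-1.5.8}; but being a quotient of the cyclic $R$-module $\H(n)$, it must itself be cyclic, forcing one of $a,b$ to be zero. After relabeling I take $b=0$ and set $\alpha = a \in [0,k]$, so $\H_l(n) \iso R/\m^{k-\alpha}$ and, by \cref{lambda-relation}, condition $(1)$ becomes $\alpha + \delta = k$.

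Next I would establish $(2)\iff(3)\iff(4)$. The identity $\H_l(n) = \H_l(nl)$ (both being the strict-at-$l$ Selmer group, which ignores the finite/transverse distinction) shows that $(2)$ and $(3)$ both say $\H_l(n)=0$, equivalently $\alpha = k$. For $(2)\iff(4)$, I use $0\to T\xrightarrow{\pi}T\to\overline{T}\to0$ together with $H^0(K,\overline{T})=0$ (from \cref{H.1}) to obtain an injection $\H(n)/\pi\H(n)\hookrightarrow\overline{\H}(n)$. Since $n$ is a core vertex both sides are $\overline{R}$-vector spaces of dimension one (the source by cyclicity of $\H(n)$, the target because $\rho(n)=1$), so this injection is an isomorphism. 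Transferring $\loc{l}$ through this identification, $(4)$ becomes the statement that the image of a generator of $\H(n)$ under $\loc{l}$ does not lie in $\pi\cdot\Cohomology{1}{\f}{K_l}{T}$, which is equivalent to $\loc{l}\colon R\to\Cohomology{1}{\f}{K_l}{T}$ being injective, i.e.~to $(2)$.

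For $(1)\iff(4)$, I apply \cref{parity-lemma}. By \cref{H.5}, $\overline{\H}(n) = \overline{\H}(n)^+\oplus\overline{\H}(n)^-$ under $\tau$ with $\rho(n)^+ + \rho(n)^- = 1$; swapping signs if necessary, I assume $\rho(n)^+=1$ and $\rho(n)^-=0$, so that $\overline{\H}(n) = \overline{\H}(n)^+$. Under $(4)$, the first part of \cref{parity-lemma} forces $\rho(nl)^+ = 0$, and the second part applied to the vacuously-zero $\loc{l}\overline{\H}(n)^-$ forces $\rho(nl)^-=1$; hence $\rho(nl)=1$ and $(1)$ holds. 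Conversely, if $(4)$ fails, both signs fall into the second case of \cref{parity-lemma} and $\rho(nl)=3$.

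Finally, for $(1)\iff(5)$ and the concluding isomorphism: the direction $(1)\Rightarrow(5)$ is an immediate rerun of the mod-$\pi$ argument applied to the cyclic module $\H(nl)\iso R$ and the target $\Cohomology{1}{\tr}{K_l}{T}$, using $(3)$ to get injectivity and hence non-vanishing mod $\pi$. The reverse direction $(5)\Rightarrow(1)$ I expect to be the main obstacle, since when $nl$ is not a core vertex the injection $\H(nl)/\pi\H(nl)\hookrightarrow\overline{\H}(nl)$ is no longer an isomorphism, so the mod-$\pi$ argument does not reverse cleanly. My plan is to analyze the sign decomposition $\overline{\H}(nl)=\overline{\H}(nl)^+\oplus\overline{\H}(nl)^-$: under $\neg(1)$ these have dimensions $(2,1)$, and combining \cref{parity-lemma} with the self-dual structure from \cref{H.4} (underlying the Cassels--Tate-style pairing implicit in \cref{H(n)-structure}) should constrain the localization on each $\pm$-piece enough to force $\loc{l}\overline{\H}(nl)=0$. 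Once all five conditions are equivalent, the final isomorphism $(\overline{\loc{l}^\s})^{-1}\circ\overline{\phi_l^{\f\s}}\circ\overline{\loc{l}}\colon\overline{\H}(n)\rightiso\overline{\H}(nl)$ is a composition of three isomorphisms between one-dimensional $\overline{R}$-spaces: $\overline{\loc{l}}$ by $(4)$, $\overline{\phi_l^{\f\s}}$ by its construction, and $\overline{\loc{l}^\s}$ by the $(5)$-version of the same argument.
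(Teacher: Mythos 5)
Your handling of $(1)\Leftrightarrow(2)\Leftrightarrow(3)\Leftrightarrow(4)$ and of $(1)\Rightarrow(5)$ is correct and essentially the paper's argument: the paper obtains $(2)\Leftrightarrow(4)$ from the identity $\overline{\H}_l(n)=\H_l(n)[\m]$ of Howard's Lemma 1.3.3 where you use the dual comparison $\H(n)/\m\H(n)\hookrightarrow\overline{\H}(n)$ (fine, though you are implicitly using that $\Cohomology{1}{\f}{K_l}{T}$ is free over $R$ and that its reduction injects into $\Cohomology{1}{\f}{K_l}{\overline{T}}$), and it proves $(1)\Leftrightarrow(4)$ by exactly your parity-lemma count; the length bookkeeping via \cref{Lemma-1.5.8} is not needed. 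Two points remain.

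First, the direction $(5)\Rightarrow(1)$, which you rightly single out as the obstacle, cannot be rescued by the plan you sketch, because under $\neg(4)$ the conclusion you are trying to force is false on dimension grounds: if $\loc{l}\overline{\H}(n)=0$ then \cref{parity-lemma} gives $\rho(nl)=\rho(n)+2=3$, while the kernel of $\loc{l}$ on $\overline{\H}(nl)$ is $\overline{\H}_l(n)\subseteq\overline{\H}(n)$, of dimension at most $1$; hence $\loc{l}\overline{\H}(nl)$ has dimension at least $2$ and in particular is nonzero. So no amount of sign analysis will show $\loc{l}\overline{\H}(nl)=0$ there. (For what it is worth, the paper's own one-line justification of $(4)\Leftrightarrow(5)$ --- that the two maps have the same kernel --- conflates ``nonzero image'' with ``zero kernel,'' which is valid for the one-dimensional $\overline{\H}(n)$ but not for the possibly three-dimensional $\overline{\H}(nl)$; only the implications $(4)\Rightarrow(1)$ and $(1)\Rightarrow(2),(3),(4),(5)$ are used in the sequel.)

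Second, the ``moreover'' clause. Declaring the composite to be ``a composition of three isomorphisms between one-dimensional spaces'' does not make it well defined: $\overline{\phi_l^{\f\s}}\circ\overline{\loc{l}}$ and $\overline{\loc{l}^\s}\vert_{\overline{\H}(nl)}$ each have one-dimensional image inside the two-dimensional $\Cohomology{1}{\s}{K_l}{\overline{T}}$, and one must check that these images coincide before $(\overline{\loc{l}^\s})^{-1}$ can be applied. This is precisely where the eigenspace bookkeeping you omit is the content of the paper's proof: writing $\rho(n)^\pm=1$, $\rho(n)^\mp=0$, the parity lemma gives $\rho(nl)^\mp=1$ and $\rho(nl)^\pm=0$, so $\overline{\loc{l}}$ carries $\overline{\H}(n)$ onto $\Cohomology{1}{\f}{K_l}{\overline{T}}^\pm$, $\overline{\loc{l}^\s}$ carries $\overline{\H}(nl)$ onto $\Cohomology{1}{\s}{K_l}{\overline{T}}^\mp$, and $\overline{\phi_l^{\f\s}}$ interchanges the eigenspaces --- which is exactly why the first image lands onto the second.
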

\begin{proof}
The kernel of maps $(2)$ and $(3)$ is $\H_l(n)$ and of maps $(4)$ and $(5)$ is $\overline{\H}_l(n).$ By \cite[Lemma 1.3.3]{Howard}, we have $\overline{\H}_l(n)=\H_l(n)[\m],$ and so one is $0$ if and only the if the other is also $0.$ Since $n$ is a core vertex, which means that $\overline{\H}(n)\iso R/\m,$ we have that $(4)$ is equivalent to its kernel being $0.$ Hence $(2)\iff(3)\iff(4)\iff(5).$

Since $n$ is a core vertex, we must have $\rho(n)^\pm=1$ and $\rho(n)^\mp=0$ for some sign $\pm.$ By \cref{parity-lemma}, this means that $nl$ is a core vertex if and only if $\loc{l}\overline{\H}(n)^\pm\neq 0.$ This last condition is equivalent to $(4)$ since $\rho(n)^\mp=0,$ which means that $\overline{\H}(n)^\pm=\overline{\H}(n).$ Hence $(1)\iff (4).$

Now assume that $nl$ is a core vertex. By \cref{parity-lemma}, $\rho(nl)^\mp=1$ and $\rho(nl)^\pm=0.$ So we have that $\loc{l}\overline{\H}(n)\subseteq \Cohomology{1}{\f}{K_l}{\overline{T}}^\pm\iso R/\m.$ But since $n$ is a core vertex, $\overline{\H}(n)\iso R/\m.$ So $(4)$ implies that in fact $\loc{l}$ induces an isomorphism
\begin{equation}
    \overline{\loc{l}}\colon \overline{\H}(n)\rightiso \Cohomology{1}{\f}{K_l}{\overline{T}}^\pm.
\end{equation}
Analogously, by $(5),$ $\loc{l}\colon \overline{\H}(nl)\to\Cohomology{1}{}{K_l}{\overline{T}}$ induces an isomorphism
\begin{equation}
    \overline{\loc{l}^\s}\colon \overline{\H}(nl)\rightiso\Cohomology{1}{\s}{K_l}{\overline{T}}^\mp.
\end{equation}
Now, since $\overline{\phi_l^{\f\s}}$ is an isomorphism that changes the eigenspaces, we have the isomorphism in the proposition.
\end{proof}

\begin{corollary}\label{same-ideals}
Let $\kappa\in\KS(T,\mathcal{F},\mathcal{L}).$ If $n$ and $m$ are connected in $\mathcal{X},$ then $\kappa_n\in\H(n)$ and $\kappa_m\in\H(m)$ generate isomorphic $R$-modules.
\end{corollary}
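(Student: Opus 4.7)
The plan is to reduce to a single edge and then assemble the injectivity statements from \cref{edges-lemma} with the defining Kolyvagin system relation. Since isomorphism of $R$-modules is an equivalence relation, a straightforward induction on the length of a path in $\mathcal{X}$ connecting $n$ to $m$ reduces us to the case $m = nl$ for some prime $l$ with $nl \in \mathcal{N}$ and both $n,nl$ core vertices.

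In this edge case, $\H(n) \iso R$ and $\H(nl) \iso R$, so $\kappa_n R$ and $\kappa_{nl} R$ are cyclic $R$-modules, and it suffices to show that they have the same annihilator. The first key step is to transport both classes into the local cohomology at $l$: by parts $(2)$ and $(3)$ of \cref{edges-lemma}, the localization maps $\loc{l}\colon \H(n) \to \Cohomology{1}{}{K_l}{T}$ and $\loc{l}\colon \H(nl) \to \Cohomology{1}{}{K_l}{T}$ are both injective, so
\begin{equation}
    \kappa_n R \iso \loc{l}(\kappa_n) R \quad \text{and} \quad \kappa_{nl} R \iso \loc{l}(\kappa_{nl}) R.
\end{equation}

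Next, since $\kappa_{nl} \in \H(nl)$ satisfies the transverse local condition at $l$, we have $\loc{l}(\kappa_{nl}) \in \Cohomology{1}{\tr}{K_l}{T}$; combined with the splitting $\Cohomology{1}{}{K_l}{T} = \Cohomology{1}{\f}{K_l}{T} \oplus \Cohomology{1}{\tr}{K_l}{T}$ recalled in \Cref{section2.1}, the quotient map to the singular part restricts to an isomorphism on $\Cohomology{1}{\tr}{K_l}{T}$, yielding $\loc{l}(\kappa_{nl}) R \iso \loc{l}^\s(\kappa_{nl}) R$. Finally, the Kolyvagin system axiom provides the identity $\loc{l}^\s(\kappa_{nl}) = \phi_l^{\f\s}(\loc{l}(\kappa_n))$, and because $\phi_l^{\f\s}$ is an isomorphism, $\loc{l}^\s(\kappa_{nl}) R \iso \loc{l}(\kappa_n) R$. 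Chaining the four isomorphisms produces $\kappa_n R \iso \kappa_{nl} R$.

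I do not foresee a substantive obstacle: the argument is essentially an assembly of already-established ingredients, and the role of \cref{edges-lemma} is precisely to guarantee that localization at $l$ is injective on both Selmer groups, which is exactly what enables the comparison of the cyclic modules generated by $\kappa_n$ and $\kappa_{nl}$ through the transverse/singular identification and the comparison isomorphism $\phi_l^{\f\s}$.
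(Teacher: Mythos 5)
Your proof is correct and follows essentially the same route as the paper's: reduce to a single edge $m=nl$, invoke the injectivity statements of \cref{edges-lemma} for both core vertices, and transfer the generator through the Kolyvagin system relation via $\phi_l^{\f\s}$. The only cosmetic difference is that you phrase the conclusion as a chain of isomorphisms of cyclic modules, whereas the paper tracks the socle element $\m^a\kappa_n$ and compares lengths; these are the same computation.
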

\begin{proof}
It suffices to consider $m=nl.$ Suppose first that $\kappa_n\neq 0.$ Let $a=\length{R\kappa_n}-1.$ Then $a\ge 0$ since $\kappa_n\neq 0,$ and we have $0\neq \pi^a\kappa_n\in\H(n)[\m].$ Then by \cref{edges-lemma}, we have
\begin{equation}
    0\neq (\phi_l^{\f\s}\circ\loc{l})(\m^a\kappa_n)=\m^a(\phi_l^{\f\s}\circ\loc{l})(\kappa_n)\subseteq \Cohomology{1}{\s}{K_l}{T}[\m]
\end{equation}
and
\begin{equation}
    \loc{l}^\s(\m^a\kappa_{nl})=\m^a\loc{l}^\s(\kappa_{nl})\subseteq \Cohomology{1}{\s}{K_l}{T}.
\end{equation}
By the Kolyvagin system relations, we have $(\phi_l^{\f\s}\circ\loc{l})(\kappa_n)=\loc{l}^\s(\kappa_{nl}).$ So these two submodules are in fact the same, and we have
\begin{equation}
    0\neq \loc{l}^\s(\m^a\kappa_{nl})\subseteq \Cohomology{1}{\s}{K_l}{T}[\m].
\end{equation}
Then, by \cref{edges-lemma}, this means that $0\neq\m^a\kappa_{nl}\subseteq\H(nl)[\m].$ This implies that $a=\length{R\kappa_{nl}}-1,$ which implies the claim.

The case that $\kappa_{nl}\neq 0$ is completely analogous, and the claim is immediate if $\kappa_n=\kappa_{nl}=0.$
\end{proof}

Now we proceed to prove that $\mathcal{X}$ is connected. For this part, it will be necessary to assume that $p>4$ in order to apply the following lemma.

\begin{lemma}\label{chebotarev-lemma}
Let $s\ge 1.$ Consider nonzero classes $c_1,\ldots,c_n\in\Cohomology{1}{}{K}{\overline{T}}$ such that $c_i$ is in the eigenspace $\epsilon_i.$ Then, if $p>n,$ there is a positive proportion of primes $l\in\mathcal{L}_s(T)$ such that $\loc{l}{c_i}$ are all nonzero.
\end{lemma}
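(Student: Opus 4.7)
The plan is to convert the nonvanishing of $\loc{l}(c_i)$ into a nonvanishing condition on the value of an auxiliary homomorphism at a Frobenius element, and then apply Chebotarev together with an elementary union-bound argument that exploits $p>n$.

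First I would restrict each $c_i$ to $G_F$, where $F$ is the field from \eqref{H.2}. By inflation-restriction the kernel of $\Cohomology{1}{}{K}{\overline{T}}\to\Hom(G_F,\overline{T})$ equals $\Cohomology{1}{}{F/K}{\overline{T}}$, which injects via inflation into $\Cohomology{1}{}{F(\mu_{p^\infty})/K}{\overline{T}}=0$; hence each $c_i$ restricts to a nonzero $G_K$-equivariant homomorphism $\widetilde{c}_i\colon G_F\to\overline{T}$. Next, for a prime $\lambda\in\mathcal{L}_s(T)$ the Frobenius acts trivially on $T/\m^s T$, so a lift of $\mathrm{Frob}_\lambda$ can be taken in $G_F$, and under the identification $\Cohomology{1}{\f}{K_\lambda}{\overline{T}}\iso\overline{T}$ (evaluation at Frobenius) one has $\loc{l}(c_i)=\widetilde{c}_i(\mathrm{Frob}_\lambda)$. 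Combined with \eqref{H.5} and the eigenspace assumption on $c_i$, the nonvanishing of $\loc{l}(c_i)$ becomes the nonvanishing of a specific $\tau$-eigenspace projection of $\widetilde{c}_i(\mathrm{Frob}_\lambda)$, and \eqref{H.1} ensures that this projection of $\widetilde{c}_i$ is itself a nonzero homomorphism on $G_F$.

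Then I would apply the Chebotarev density theorem in a finite Galois extension $M/\Q$ large enough that the $\widetilde{c}_i$, the action on $T/\m^s T$, and $\mu_{p^s}$ all factor through $\mathrm{Gal}(M/\Q)$. Primes $l\in\mathcal{L}_s(T)$ correspond to a nonempty union of conjugacy classes in $\mathrm{Gal}(M/\Q)$ whose elements restrict to $\tau$ on $K$ and act trivially on $T/\m^s T$. Inside this set, each condition ``the forbidden eigenspace projection of $\widetilde{c}_i(g)$ is zero'' cuts out a subset of relative density at most $1/p$ (proper because each projected $\widetilde{c}_i$ is nonzero, via \eqref{H.1}); since there are $n$ such conditions and $p>n$, their union has relative density strictly less than $1$, and Chebotarev converts the remaining positive-density set of conjugacy classes into a positive density of primes with the desired nonvanishings.

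The main obstacle is the eigenspace bookkeeping: because the $\tau$-action on $\Cohomology{1}{}{K}{\overline{T}}$ is twisted by conjugation while $\mathrm{Frob}_\lambda$ at an inert $\lambda$ relates to the coset $\tau G_K$, one must carefully track which eigenspace of $\overline{T}$ contains $\loc{l}(c_i)$, so that the ``bad'' locus is genuinely codimension one in a rank-one target and the union bound is controlled by exactly $n$ subspaces rather than a larger multiple.
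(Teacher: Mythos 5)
Your proposal is correct and follows essentially the same route as the paper: injectivity of restriction via \eqref{H.2}, the key observation that \eqref{H.1} together with \eqref{H.5}(a) forces the relevant $\tau$-eigenspace projection of each restricted class to be a nonzero homomorphism (otherwise its image would be a proper $G_K$-submodule of $\overline{T}$), a union bound of $n$ index-$\ge p$ kernels using $p>n$, and Chebotarev applied to elements of the form $(\tau\sigma)^2$ to realize Frobenii of inert primes in $\mathcal{L}_s(T)$. The eigenspace bookkeeping you flag as the main obstacle is exactly what the paper resolves with its decomposition $G_i=G_i^+\oplus G_i^-$ and the claim that $c_i(G_i^+)\neq 0$.
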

\begin{proof}
We proceed as in \cite[Lemma 1.6.2]{Howard}. Let $F/\Q$ be the extension in \eqref{H.2} and $L$ be the Galois closure over $\Q$ of $K(T/\m^sT,\mu_{p^s}).$ Since $F$ is Galois, $L\subseteq F(\mu_{p^\infty}),$ and so the restriction map
\begin{equation}[restriction-sequence]
    \Cohomology{1}{}{K}{\overline{T}}\to\Cohomology{1}{}{L}{\overline{T}}^{\Gal{L}{K}}\iso \Hom(G_L,\overline{T})^{\Gal{L}{K}}.
\end{equation}
is an injection by \eqref{H.2}. We identify the $c_i$ with their images under restriction. Let $E_i$ be the smallest extension of $L$ with $c_i(G_{E_i})=0.$ Set $G_i=\Gal{E_i}{L}.$ Then $G_i$ is a $\F_p$ vector space with a natural action of $\Gal{L}{\Q},$ and we let $G_i^\pm$ be the eigenspaces for the action of $\tau.$ Note that $c_i(G_i^\pm)\subseteq \overline{T}^{\pm\epsilon_i}.$

We claim that the maps $c_i\colon G_i^+\to\overline{T}^{\epsilon_i}$ are nontrivial. If they were trivial, then $c_i(G_i)=c_i(G_i^-)\subseteq \overline{T}^{-\epsilon_i}.$ But then $R\cdot c_i(G_i)$ would be a $R[G_K]$-submodule of $\overline{T}$ contained in $\overline{T}^{-\epsilon_i}.$ This would imply, because of \eqref{H.1} and \eqref{H.5}(a), that $c_i(G_i)=0,$ which is not true by the injectivity of \eqref{restriction-sequence}.

Let $H_i=\{\gamma\in G_L^+\colon c_i(\gamma)=0\text{ in }\overline{T}^{\epsilon_i}\}.$ By the above, $H_i\neq G_L^+.$ Let $\mu$ be the Haar measure on $G_L^+,$ normalized with $\mu(G_L^+)=1.$ Then we have $\mu(H_i)=\frac{1}{\abs{c_i(G_L^+)}}\le\frac{1}{p}$ as $c_i(G_L^+)$ is a nonzero $\F_p$ vector space. So if $p>n,$ we have
\begin{equation}
    \mu\left(\bigcup_iH_i\right)\le \frac{n}{p}<1=\mu(G_L^+).
\end{equation}
Hence there exists a positive proportion of elements $\eta\in G_L^+$ such that $c_i(\eta)\neq 0$ for all $i.$ Write $\eta\equiv(\tau\sigma)^2\mod G_{E_1\cdots E_n}$ for some $\sigma\in\ G_L.$ We can use Chebotarev to choose a positive proportion of primes $l$ of $\Q$ such that its Frobenius class in $\Gal{E_1\cdots E_n}{\Q}$ is $\tau\sigma,$ and at which the localizations of $c_i$ are unramified. Then the images of the localizations of the $c_i$ in $\Cohomology{1}{\ur}{K_l}{\overline{T}}$ are the evaluation of $c_i$ at the Frobenius of $l,$ who in $\Gal{E_1\cdots E_n}{\Q}$ will equal $(\tau\sigma)^2=\eta,$ and hence will be nonzero.

Finally, as the Frobenius class of $l$ at $\Gal{L}{\Q}$ is $\tau,$ as in \cite[Lemma 3.5.6(i)]{Mazur-Rubin} we conclude that $l\in\mathcal{L}_s(T).$
\end{proof}

\begin{corollary}\label{chebotarev-handy}
Assume $p>3,$ and let $s\ge 1.$ Let $r(n)\defeq\max(\rho(n)^+,\rho(n)^-).$ Let $n\in\mathcal{N}$ be such that $r(n)>1$ and let $0\neq c\in\mathcal{H}(n).$ Then there is a positive proportion of primes $l\in\mathcal{L}_s(T)$ such that $\loc{l}(c)\neq 0$ and $r(nl)<r(n).$
\end{corollary}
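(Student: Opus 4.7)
The plan is to apply the Chebotarev-type \cref{chebotarev-lemma} to a carefully chosen set of at most two eigenspace classes. Since $c\in\H(n)$ need not reduce nontrivially modulo $\m$, I first pass to its ``leading term'': let $\pi$ be a uniformizer of $R$ and $b\ge 0$ the largest integer with $\pi^b c\neq 0$. By \cite[Lemma 1.3.3]{Howard}, $\H(n)[\m]=\overline{\H}(n)$, so $\pi^b c$ lies in $\overline{\H}(n)$ and decomposes as $\pi^b c=c_++c_-$ with $c_\pm\in\overline{\H}(n)^\pm$, at least one summand being nonzero. Because $\loc{l}(c_+)$ and $\loc{l}(c_-)$ sit in distinct $\tau$-eigenspaces of $\Cohomology{1}{}{K_l}{\overline{T}}$ (valid since $p$ is odd and $l$ is inert in $K$, so $\tau$ acts on $K_l$), the desired $\loc{l}(c)\neq 0$ follows as soon as $\loc{l}(c_{\epsilon_0})\neq 0$ for any $\epsilon_0$ with $c_{\epsilon_0}\neq 0$.

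Next I translate the condition $r(nl)<r(n)$ through \cref{parity-lemma}. Since $\rho(nl)^\epsilon\in\{\rho(n)^\epsilon\pm 1\}$, the inequality $r(nl)\le r(n)-1$ holds exactly when, for every sign $\epsilon$ with $\rho(n)^\epsilon\ge r(n)-1$, one has $\loc{l}\overline{\H}(n)^\epsilon\neq 0$ (to force the downward step rather than the upward one); for signs with $\rho(n)^\epsilon\le r(n)-2$, the upward step is harmless. Let $S$ denote the set of such critical signs; it always contains the sign that achieves $r(n)$, and $|S|\le 2$.

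I then assemble the Chebotarev collection $\mathcal{C}$ as follows: for each $\epsilon\in S$, include $c_\epsilon$ if it is nonzero, otherwise include any nonzero class in $\overline{\H}(n)^\epsilon$ (which exists since $\rho(n)^\epsilon\ge r(n)-1\ge 1$); and if every $c_\epsilon$ with $\epsilon\in S$ happens to vanish, also add $c_{\epsilon_0}$ for the unique sign $\epsilon_0\notin S$ with $c_{\epsilon_0}\neq 0$. A short case check on $|S|$ and on which of the $c_\pm$ vanish shows $|\mathcal{C}|\le 2$ in every scenario (the contingent ``extra'' class is only needed when $|S|=1$), so the hypothesis of \cref{chebotarev-lemma} is comfortably satisfied under $p>3$. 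Applying the lemma yields the positive proportion of primes $l\in\mathcal{L}_s(T)$ at which every class in $\mathcal{C}$ localizes nontrivially; \cref{parity-lemma} then gives $\rho(nl)^\epsilon=\rho(n)^\epsilon-1$ for each $\epsilon\in S$, hence $r(nl)\le r(n)-1$, while $\loc{l}(c_{\epsilon_0})\neq 0$ together with eigenspace separation yields $\loc{l}(\pi^b c)\neq 0$, whence $\loc{l}(c)\neq 0$. The main subtlety is the ``balanced'' case $\rho(n)^+=\rho(n)^-=r(n)$, in which one must force a simultaneous decrease in both eigenspaces; the construction handles this by placing a distinguished nonzero representative from each of $\overline{\H}(n)^+$ and $\overline{\H}(n)^-$ into $\mathcal{C}$, at the cost of the Chebotarev input requiring two independent nonzero eigenspace classes rather than one.
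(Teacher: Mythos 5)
Your argument is correct and follows essentially the same route as the paper: reduce $c$ to a nonzero element of $\H(n)[\m]=\overline{\H}(n)$, split it into $\tau$-eigencomponents, feed the relevant eigenspace classes into \cref{chebotarev-lemma}, and invoke \cref{parity-lemma} to force the drop in $r.$ The only difference is bookkeeping: you merge the class detecting $c$ with one of the eigenspace representatives and ignore the non-critical sign, so you only ever need two Chebotarev classes where the paper uses up to three --- a harmless (indeed slightly sharper) variant of the same proof.
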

\begin{proof}
If $\rho(n)^+,\rho(n)^-\ge 1,$ consider primes $l,$ by \cref{chebotarev-lemma}, such that
\begin{enumerate}
    \item $\loc{l}(\overline{\mathcal{H}}(n)^+)\neq 0,$
    \item $\loc{l}(\overline{\mathcal{H}}(n)^-)\neq 0,$
    \item $\loc{l}(c)\neq 0.$
\end{enumerate}
Note this is where we use $p>3.$ The third condition can be ensured by the following: consider $\tilde{c}=rc$ for some $r\in R$ in a way that $0\neq\tilde{c}\in\H(n)[\m].$ By \cite[Lemma 1.3.3]{Howard} we have $\H(n)[\m]=\overline{\H}(n)$ and so $\tilde{c}$ has nonzero projection in one of the eigenspaces, say $0\neq \tilde{c}^\pm\in\overline{\H}(n)^\pm.$ Now we can choose $l$ such that $\loc{l}(\tilde{c}^\pm)\neq 0,$ and this implies $\loc{l}(c)\neq 0.$

By \cref{parity-lemma} and $(1),(2)$ above, we have $\rho(nl)^\pm=\rho(n)^\pm-1$ for both signs $\pm,$ and hence $r(nl)=r(n)-1.$

Now consider the case where, say, $\rho(n)^\mp=0.$ Then $\rho(n)^\pm>1.$ Similarly to the above, we consider primes $l,$ by \cref{chebotarev-lemma}, such that
\begin{enumerate}
    \item $\loc{l}(\overline{\mathcal{H}}(n)^\pm)\neq 0$
    \item $\loc{l}(c)\neq 0.$
\end{enumerate}
By \cref{parity-lemma} and $(1)$ above, we have $\rho(nl)^\pm=\rho(n)^\pm-1$ and $\rho(nl)^\mp=1,$ and hence $r(nl)<r(n).$
\end{proof}

\begin{theorem}\label{connected}
If $p>4$ and $\mathcal{L}\supseteq \mathcal{L}_s(T)$ for some $s\ge 1,$ then $\mathcal{X}$ is connected.
\end{theorem}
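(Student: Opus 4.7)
The plan is to prove connectedness of $\mathcal{X}$ by induction on the distance $d(n_0, n_1)$ between two given core vertices $n_0, n_1$, measured as the number of primes in the symmetric difference of their factorizations. The base case $d \leq 1$ is immediate: either $n_0 = n_1$, or $n_1 = n_0 l$ for a prime $l$, yielding a direct edge in $\mathcal{X}$ since both endpoints are core by hypothesis.

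For the inductive step with $d \geq 2$, I would pick a prime $l$ in the symmetric difference, say with $l \mid n_1$ and $l \nmid n_0$. If $n_0 l$ happens to be a core vertex, we obtain a direct edge $n_0 - n_0 l$ in $\mathcal{X}$ and then reduce to connecting the core vertices $n_0 l$ and $n_1$, at distance $d - 1$, by the inductive hypothesis. The essential case is when $n_0 l$ is not core, in which case, writing $\epsilon$ for the eigenspace of $n_0$, \cref{parity-lemma} forces $\rho(n_0 l)^\epsilon = 2$ and $\rho(n_0 l)^{-\epsilon} = 1$.

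To bypass this non-core obstacle I would produce an auxiliary prime $l' \in \mathcal{L}_s(T)$, distinct from $l$ and from the primes dividing $n_0 n_1$, by invoking \cref{chebotarev-lemma} with four simultaneous conditions: that $\loc{l'}$ is nonzero on (i) a generator of $\overline{\mathcal{H}}(n_0)$, (ii) a chosen nonzero class in $\overline{\mathcal{H}}(n_0 l)^\epsilon$, (iii) the generator of $\overline{\mathcal{H}}(n_0 l)^{-\epsilon}$, and (iv) a generator of $\overline{\mathcal{H}}(n_1)$. These are four nonzero classes in $\Cohomology{1}{}{K}{\overline{T}}$ living in well-defined $\tau$-eigenspaces, so \cref{chebotarev-lemma} with $p > 4$ supplies a positive proportion of such $l'$. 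By \cref{edges-lemma}, condition (i) makes $n_0 l'$ a core vertex and (iv) makes $n_1 l'$ a core vertex, while (ii) and (iii) combined with \cref{parity-lemma} give $\rho(n_0 l l')^\pm = \rho(n_0 l)^\pm - 1$, so $n_0 l l'$ is core as well.

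With such an $l'$ in hand, the direct edges $n_0 - n_0 l' - n_0 l l'$ and $n_1 l' - n_1$ all lie in $\mathcal{X}$, and $d(n_0 l l', n_1 l') = d - 1$ with both endpoints core, so the inductive hypothesis connects $n_0 l l'$ to $n_1 l'$ in $\mathcal{X}$, completing the path from $n_0$ to $n_1$. The main obstacle is ensuring that the chain of four Chebotarev constraints suffices — in particular, that a single nonzero class witnesses nontriviality of $\loc{l'}$ on the two-dimensional eigenspace $\overline{\mathcal{H}}(n_0 l)^\epsilon$ and thereby triggers the dimension drop predicted by \cref{parity-lemma} — but this is exactly what \cref{chebotarev-lemma} delivers under $p > 4$.
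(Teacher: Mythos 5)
Your proposal is correct and follows essentially the same route as the paper: induction on the size of the symmetric difference, handling the case where $n_0l$ fails to be core by choosing an auxiliary prime $l'$ via \cref{chebotarev-lemma} subject to the same four localization conditions, and then assembling the path $n_0,\,n_0l',\,n_0ll'$ and $n_1,\,n_1l'$ before invoking the inductive hypothesis. Your closing worry is unfounded: \cref{parity-lemma} only requires $\loc{l'}$ to be nonzero on the eigenspace, so a single nonzero witness in $\overline{\mathcal{H}}(n_0l)^\epsilon$ suffices, exactly as in the paper.
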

\begin{proof}
We will prove that core vertices $n$ and $m$ are connected in $\mathcal{X}$ by induction on $\nu(\mathrm{lcm}(n,m))-\nu(\mathrm{gcd}(n,m)),$ where $\nu(t)$ denotes the number of prime divisors of $t.$  Since $n\neq m$ are square-free, we may suppose without loss of generality that there is a prime $l$ with $l\mid m$ but $l\nmid n.$

Since $n$ is a core vertex, $\rho(n)^\pm=1$ and $\rho(n)^\mp=0$ for some sign $\pm.$ By \cref{parity-lemma}, we have $\rho(nl)^\pm\in\{0,2\}$ and $\rho(nl)^\mp=1.$ If $\rho(nl)^\pm=0,$ then $nl$ is also a core vertex and we are done by induction on the pair $(n',m')=(nl,m).$

So assume $\rho(nl)^\pm=2.$ We choose a prime $r\in\mathcal{L}_s(T)$ such that
\begin{enumerate}
    \item $\loc{r}(\overline{\H}(nl)^\pm)\neq 0,$
    \item $\loc{r}(\overline{\H}(nl)^\mp)\neq 0,$
    \item $\loc{r}(\overline{\H}(n)^\pm)\neq 0,$
    \item $\loc{r}(\overline{\H}(m))\neq 0.$
\end{enumerate}
The existence of such an $r$ follows from \cref{chebotarev-lemma}, and the fact that $p>4.$

Then \cref{parity-lemma}, together with $(1)$ and $(2),$ implies that $nrl$ is a core vertex. And $(3)$ and $(4)$ with \cref{edges-lemma} imply respectively that $nr$ and $mr$ are core vertices.

This means that we have a path $n,nr,nlr$ and a path $m,mr$ in $\mathcal{X}.$ So we are done by induction on the pair $(n',m')=(nlr,mr).$
\end{proof}

As mentioned before, this concludes the proof of the following.

\begin{corollary}\label{same-ideals-artinian}
Let $\kappa\in\KS(T,\mathcal{F},\mathcal{L}),$ $p>4$ and $\mathcal{L}\supseteq \mathcal{L}_s(T)$ for some $s\ge 1.$ Then there is a $k\ge d\ge0$ such that for any core vertex $n,$ we have
\begin{equation}
    \kappa_n R=\m^d\H(n).
\end{equation}
\end{corollary}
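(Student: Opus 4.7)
The plan is to combine the two main structural results of this subsection directly. The statement asserts that the exponent $d$ for which $\kappa_n R=\m^d\H(n)$ holds does not depend on the choice of core vertex $n.$

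First, I would unpack what the equation $\kappa_n R=\m^d\H(n)$ means intrinsically. Since $R$ is a principal Artinian ring of length $k,$ its lattice of ideals is totally ordered by the powers of $\m.$ For any core vertex $n$ we have $\H(n)\iso R,$ so $\kappa_n R$ corresponds under any such isomorphism to a unique power $\m^{d(n)}$ with $0\le d(n)\le k,$ giving $\kappa_n R=\m^{d(n)}\H(n).$ The integer $d(n)$ is independent of the choice of identification $\H(n)\iso R$ because it is determined by the isomorphism class of $\kappa_n R$ as an $R$-module, namely by $k-d(n)=\length{R\kappa_n}.$

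Next, I would invoke \cref{same-ideals}: adjacent core vertices $n,nl\in\mathcal{X}$ satisfy $\kappa_n R\iso \kappa_{nl} R$ as $R$-modules, and hence $d(n)=d(nl).$ Inducting along paths in the graph $\mathcal{X}$ then shows that $d(n)$ is constant on each connected component of $\mathcal{X}.$

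Finally, I would apply \cref{connected}, which, under the standing hypotheses $p>4$ and $\mathcal{L}\supseteq\mathcal{L}_s(T),$ ensures that $\mathcal{X}$ is connected. Consequently $d\defeq d(n)$ is well defined and independent of the core vertex, yielding the corollary. There is no real obstacle here: all the technical content has been absorbed into \cref{same-ideals} and \cref{connected}, and what remains is just the observation that the two results fit together exactly as needed.
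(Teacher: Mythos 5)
Your proof is correct and follows exactly the paper's own argument, which simply cites \cref{same-ideals} and \cref{connected}; you have merely spelled out the routine details (that the exponent $d(n)$ is determined by $\length{R\kappa_n}$ and is therefore constant along edges of the connected graph $\mathcal{X}$).
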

\begin{proof}
This follows directly from \cref{connected} and \cref{same-ideals}.
\end{proof}

\begin{proposition}\label{d=k-iff-0}
Let $\kappa\in\KS(T,\mathcal{F},\mathcal{L}),$ $p>4$ and $\mathcal{L}\supseteq \mathcal{L}_s(T)$ for some $s\ge 1.$ Then $d=k$ if and only if $\kappa=0.$
\end{proposition}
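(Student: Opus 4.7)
The plan is to verify the two directions separately, both by reducing to the information provided by \cref{same-ideals-artinian} and \cref{chebotarev-handy}.

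The direction $\kappa=0 \implies d=k$ is immediate: for any core vertex $n$, $\H(n)\iso R$ has length $k$, so $\kappa_n=0$ forces $\m^d\H(n)=\kappa_nR=0$; combined with $d\le k$ from \cref{same-ideals-artinian} this gives $d=k$.

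For the converse, assume $d=k$, i.e., $\kappa_n=0$ for every core vertex $n$. I propagate this vanishing to arbitrary $m\in\mathcal{N}$ by induction on $r(m):=\max(\rho(m)^+,\rho(m)^-)$. Under \eqref{epsilon-1}, $\rho(m)$ is odd, so $r(m)\ge 1$ always; the base case $r(m)=1$ forces $\rho(m)=1$, so $m$ is a core vertex and $\kappa_m=0$ by hypothesis. For the inductive step, suppose $r(m)>1$ and assume for contradiction that $\kappa_m\neq 0$. Since $R$ is Artinian, choose $a\ge 0$ maximal with $c:=\m^a\kappa_m\neq 0$; then $\m c=0$, so $c\in\H(m)[\m]=\overline{\H}(m)$ by \cite[Lemma 1.3.3]{Howard}. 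Apply \cref{chebotarev-handy} to this nonzero $c$ (which only requires $p>3$, satisfied by our hypothesis $p>4$) to find a prime $l\in\mathcal{L}_s(T)\subseteq\mathcal{L}$ with $l\nmid m$, $\loc{l}(c)\neq 0$, and $r(ml)<r(m)$. Multiplying the Kolyvagin system relation at $l$ by $\m^a$ and using that $\phi_l^{\f\s}$ is an isomorphism yields
\begin{equation}
    \loc{l}^\s(\m^a\kappa_{ml})=\phi_l^{\f\s}(\loc{l}(c))\neq 0,
\end{equation}
so $\kappa_{ml}\neq 0$, contradicting the inductive hypothesis.

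The main technical input is \cref{chebotarev-handy}, which is tailored precisely to simultaneously decrease $r$ and preserve nonvanishing of a prescribed class under $\loc{l}$; the rest is bookkeeping with the Kolyvagin system relation and \cite[Lemma 1.3.3]{Howard}. I do not expect any further obstacle: the only subtle point is choosing the right inductive measure, and $r(m)$ works because its minimum value coincides with being a core vertex, which is where the assumption $d=k$ is directly usable.
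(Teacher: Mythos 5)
Your proof is correct and follows essentially the same route as the paper's: induction on $r(m)=\max(\rho(m)^+,\rho(m)^-)$, using \cref{chebotarev-handy} to produce a prime $l$ with $r(ml)<r(m)$ and $\loc{l}(\kappa_m)\neq 0$, and then the Kolyvagin system relation to contradict the inductive hypothesis. Your intermediate reduction to $c=\pi^a\kappa_m\in\H(m)[\m]$ is harmless but unnecessary, since \cref{chebotarev-handy} already accepts an arbitrary nonzero class in $\H(m)$ (it performs that reduction internally).
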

\begin{proof}
It suffices to prove $\kappa=0$ when $d=k,$ that is, when $\kappa_n=0$ for all core vertices $n\in\mathcal{N}.$

For any $n\in\mathcal{N},$ we will prove that $\kappa_n$ is $0$ by induction on $r(n)\defeq \max(\rho(n)^+,\rho(n)^-).$ Indeed, if $\kappa_n\neq 0$ and $r(n)>1,$ we could choose a prime $l\in\mathcal{L},$ by \cref{chebotarev-handy}, such that $\loc{l}(\kappa_n)\neq 0$ and such that $r(nl)<r(n).$ But then the Kolyvagin system relation
\begin{equation}
    \loc{l}^\s(\kappa_{nl})=\phi^{\f\s}_l(\loc{l}(\kappa_n))
\end{equation}
would imply that $\loc{l}(\kappa_{nl})\neq0,$ which would contradict the induction hypothesis.
\end{proof}

\subsection{Discrete valuation rings}
Now let $R$ be a discrete valuation ring with $p>4.$

As in \cite[Section 1.6]{Howard}, if $\kappa$ is a Kolyvagin system for $(T,\mathcal{F},\mathcal{L}),$ for any integer $k>0$ we let $\kappa^{(k)}$ be its image as a Kolyvagin system for $(T^{(k)},\mathcal{F},\mathcal{L}^{(k)})$ where
\begin{equation}
    R^{(k)}\defeq R/\m^k,\quad T^{(k)}\defeq T/\m^kT,\quad \mathcal{L}^{(k)}\defeq \mathcal{L}\cap\mathcal{L}_k(T).
\end{equation}
We also denote by $M^{(k)}(n), \lambda^{(k)}(n),\mathcal{S}^{(k)}(n)$ the analogous objects.

We note that $\epsilon$ does not depend on $k,$ and we continue to assume \eqref{epsilon-1}. We also assume that $\mathcal{L}\supseteq\mathcal{L}_s(T)$ for some $s\ge 1.$

Now we use the results of the previous section to improve on \cite[Lemma 1.6.4]{Howard}.

\begin{lemma}\label{equal-ideals}
Let $\kappa\in\KS(T,\mathcal{F},\mathcal{L}).$ There exist $d(\kappa)\in\Z_{\ge 0}\cup\{\infty\}$ such that, if $n\in\mathcal{N}^{(2k-1)},$ then $\kappa_n^{(k)}R^{(k)}= \m^{d(\kappa)}\mathcal{S}^{(k)}(n),$ and $d(\kappa)$ is independent of $k>0.$
\end{lemma}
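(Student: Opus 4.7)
The plan is to combine \cref{same-ideals-artinian} at each Artinian quotient $R^{(k)}$ with a comparison across levels. For each $k>0$, applying \cref{same-ideals-artinian} to the reduction $\kappa^{(k)}\in\KS(T^{(k)},\mathcal{F},\mathcal{L}^{(k)})$ produces an integer $d^{(k)}(\kappa)\in\{0,\ldots,k\}$ such that $\kappa_n^{(k)}R^{(k)}=\m^{d^{(k)}(\kappa)}\H^{(k)}(n)$ for every core vertex $n\in\mathcal{N}^{(k)}$; by \cref{d=k-iff-0}, the extremal value $d^{(k)}(\kappa)=k$ is attained exactly when $\kappa^{(k)}=0$.

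First I would extend this identity from core vertices to every $n\in\mathcal{N}^{(2k-1)}$, after replacing $\H^{(k)}(n)$ by the cyclic submodule $\mathcal{S}^{(k)}(n)=\m^{\lambda^{(k)}(n)}\H^{(k)}(n)$. The argument proceeds by induction on $r(n)\defeq\max(\rho^{(k)}(n)^+,\rho^{(k)}(n)^-)$; under \eqref{epsilon-1} the base case $r(n)=1$ forces $n$ to be a core vertex and is handled by \cref{same-ideals-artinian}. For the inductive step with $\kappa_n^{(k)}\neq 0$, apply \cref{chebotarev-handy} with $s=2k-1$ to produce $l\in\mathcal{L}^{(2k-1)}$ satisfying $\loc_l(\kappa_n^{(k)})\neq 0$ and $r(nl)<r(n)$, so $nl\in\mathcal{N}^{(2k-1)}$ and the inductive hypothesis gives $\kappa_{nl}^{(k)}R^{(k)}=\m^{d^{(k)}(\kappa)}\mathcal{S}^{(k)}(nl)$. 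The Kolyvagin relation $\loc_l^\s(\kappa_{nl}^{(k)})=\phi_l^{\f\s}(\loc_l(\kappa_n^{(k)}))$, combined with \cref{localization-lemma} and the filtration diagram of \cref{Lemma-1.5.8}, then pins down $\kappa_n^{(k)}R^{(k)}$ inside $\mathcal{S}^{(k)}(n)$ at exponent $d^{(k)}(\kappa)$. If instead $\kappa_n^{(k)}=0$, iterating the same reduction along a path forces $\kappa_m^{(k)}=0$ at a core vertex $m$, whence $d^{(k)}(\kappa)=k$ by \cref{d=k-iff-0} and both sides of the claimed identity vanish.

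Second, I would identify $d^{(k)}(\kappa)$ across consecutive levels. The natural surjection $R^{(k+1)}\twoheadrightarrow R^{(k)}$ carries $\kappa_n^{(k+1)}$ to $\kappa_n^{(k)}$, and by the Chebotarev argument behind \cref{connected} one can produce an $n\in\mathcal{N}^{(2k-1)}$ that is simultaneously a core vertex at levels $k$ and $k+1$. Comparing the corresponding principal ideals through the reduction map then forces $d^{(k)}(\kappa)=\min(d^{(k+1)}(\kappa),k)$. Setting $d(\kappa)\defeq\sup_k d^{(k)}(\kappa)\in\Z_{\ge 0}\cup\{\infty\}$, which is $\infty$ precisely when $\kappa=0$, one has $d^{(k)}(\kappa)=\min(d(\kappa),k)$ for all $k$, and substituting into the first step gives $\kappa_n^{(k)}R^{(k)}=\m^{d(\kappa)}\mathcal{S}^{(k)}(n)$ for every $n\in\mathcal{N}^{(2k-1)}$.

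The main obstacle is the first step: verifying that $\kappa_n^{(k)}$ lies inside the cyclic submodule $\mathcal{S}^{(k)}(n)$ (and not only in $\H^{(k)}(n)$), and that the exponent $d^{(k)}(\kappa)$ is preserved verbatim along the induction. This requires careful bookkeeping through the diagram of \cref{Lemma-1.5.8}, using \cref{lambda-relation} to track the change in $\lambda$ between $n$ and $nl$, and exploiting that $\phi_l^{\f\s}$ respects the eigenspace decomposition so that the abstract shift in length matches the shift induced by the Kolyvagin relation at $l$.
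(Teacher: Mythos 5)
Your overall architecture matches the paper's: use \cref{same-ideals-artinian} to get a level-$k$ exponent $d_k$ at core vertices, observe $d_k$ stabilizes to $d(\kappa)$, and then propagate the identity $\kappa_n^{(k)}R^{(k)}=\m^{d}\mathcal{S}^{(k)}(n)$ from core vertices to all $n\in\mathcal{N}^{(2k-1)}$ by downward induction on $r(n)$ via \cref{chebotarev-handy}. Also, the containment $\kappa_n^{(k)}\in\mathcal{S}^{(k)}(n)$ that you single out as the main obstacle is not something you need to re-derive: it is exactly \cite[Lemma 1.6.4]{Howard}, which the paper simply cites.

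There is, however, a genuine gap in your inductive step. You choose $l$ so that $\loc{l}(\kappa_n^{(k)})\neq 0$, but what the argument actually requires is that $\loc{l}$ be nonvanishing on the socle $\mathcal{S}^{(k)}(n)[\m]$ of the stub module (equivalently, injective on the cyclic module $\mathcal{S}^{(k)}(n)$). Nonvanishing of $\loc{l}$ on $\kappa_n^{(k)}$ alone does not give $\loc{l}(\m^a\kappa_n^{(k)})\neq 0$ for all $a$ with $\m^a\kappa_n^{(k)}\neq 0$, nor the statement $\loc{l}(\m^{d+a}\mathcal{S}^{(k)}(n))\neq 0\iff\m^{d+a}\mathcal{S}^{(k)}(n)\neq 0$ needed to transfer the exponent through \cref{localization-lemma}; and of course it is unavailable when $\kappa_n^{(k)}=0.$ Your fallback for that case is also incorrect: from $\kappa_n^{(k)}=0$ the Kolyvagin relation only gives $\loc{l}^{\s}(\kappa_{nl}^{(k)})=0,$ which does not force $\kappa_{nl}^{(k)}=0$ at a non-core vertex, so you cannot descend the vanishing to a core vertex; and the conclusion you draw ($d^{(k)}=k$) is not what the lemma asserts in this case — it asserts $\m^{d}\mathcal{S}^{(k)}(n)=0,$ i.e.\ $d+\lambda^{(k)}(n)\ge k,$ which is compatible with $d=0$ and with nonvanishing at core vertices. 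The paper's fix handles both cases uniformly: choose $l$ (via \cref{chebotarev-handy} applied to a generator of $\mathcal{S}^{(k)}(n)[\m],$ the case $\mathcal{S}^{(k)}(n)=0$ being trivial since then $\kappa_n^{(k)}=0$) with $\loc{l}(\mathcal{S}^{(k)}(n)[\m])\neq 0,$ and run the biconditional chain
\begin{equation}
\m^a\kappa_n^{(k)}\neq 0\iff\loc{l}(\m^a\kappa_n^{(k)})\neq 0\iff\loc{l}(\m^a\kappa_{nl}^{(k)})\neq 0\iff\loc{l}(\m^{d+a}\mathcal{S}^{(k)}(nl))\neq 0\iff\m^{d+a}\mathcal{S}^{(k)}(n)\neq 0
\end{equation}
for every $a\ge 0,$ which pins down the ideal inside the cyclic module $\mathcal{S}^{(k)}(n)$ whether or not $\kappa_n^{(k)}$ vanishes.
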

\begin{proof}
By \cref{same-ideals-artinian}, we have, for every $k\ge 1,$ an integer $k\ge d_k\ge 0$ such that
\begin{equation}
    \rho(n)=1\text{ and }n\in\mathcal{N}^{(k)}\implies \kappa_n^{(k)}R^{(k)}\iso \m^{d_k}R^{(k)}.
\end{equation}
Since $\mathcal{N}^{(k+1)}\subseteq \mathcal{N}^{(k)},$ it is clear that $d_k\le d_{k+1}.$ Also, if $d_k<k,$ then $d_{k+1}=d_k.$ We let $d(\kappa)=d=\lim_{k\to\infty}d_k.$ Note that we might have $d=\infty.$ Then
\begin{equation}
    \rho(n)=1\text{ and }n\in\mathcal{N}^{(k)}\implies \kappa_n^{(k)}R^{(k)}\iso \m^{d}R^{(k)}
\end{equation}
for all $k\ge 1.$

By \cite[Lemma 1.6.4]{Howard}, we have that $\kappa_n^{(k)}\in \mathcal{S}^{(k)}(n).$

We will prove by induction on $r(k,n)\defeq k+\max\{\rho(n)^+,\rho(n)^-\}$ that if $n\in\mathcal{N}^{(2k-1)},$ then $\kappa^{(k)}_nR^ {(k)}=\m^{d}\mathcal{S}^{(k)}(n).$ Note that this is immediate for the base case $\rho(n)=1,$ given the considerations above. So assume $\rho(n)>1.$ Given that $\kappa_n^{(k)}\in\mathcal{S}^{(k)}(n),$ it suffices to consider the cases where $\mathcal{S}^{(k)}(n)\neq 0.$

Using \cref{chebotarev-handy}, choose a prime $l$ such that $nl\in\mathcal{N}^{(2k-1)},$ such that $r(k,n)>r(k,nl)$ and such that
\begin{equation}[eq_nonzero-stub]
    \loc{l}(\mathcal{S}^{(k)}(n)[\m])\neq 0.
\end{equation}
For any $a\ge 0,$ we have
\begin{equation}
\begin{split}
    \m^a\kappa_n^{(k)}\neq 0&\iff\loc{l}(\m^a\kappa^{(k)}_n)\neq 0\iff\loc{l}(\m^a\kappa^{(k)}_{nl})\neq 0 \iff \loc{l}(\m^{d+a}\mathcal{S}^{(k)}(nl))\neq 0\iff\\
    &\iff \loc{l}(\m^{d+a}\mathcal{S}^{(k)}(n))\neq 0\iff \m^{d+a}\mathcal{S}^{(k)}(n)\neq 0
\end{split}
\end{equation}
where the if and only ifs follow respectively from \eqref{eq_nonzero-stub}, the Kolyvagin system relation, the induction hypothesis, \cref{localization-lemma} and \eqref{eq_nonzero-stub}.

Since $\mathcal{S}^{(k)}(n)\iso \m^{\lambda^{(k)}(n)}R^{(k)},$ this means that $\kappa_n^{(k)}R^{(k)}=\m^d\mathcal{S}^{(k)}(n),$ and the induction is complete.
\end{proof}

\begin{definition}
For $\kappa\in\KS(T,\mathcal{F},\mathcal{L}),$ we define
\begin{equation}
    \partial^{(\infty)}(\kappa)\defeq\min\{\max\{a\colon \kappa_n^{(k)}\in\m^a\H^{(k)}(n)\}\colon n\in\mathcal{N}^{(k)}\}.
\end{equation}
We call $\kappa$ \emph{primitive} if $\partial^{(\infty)}(\kappa)=0.$

Let $d(\kappa)$ be as in \cref{equal-ideals}. By the proof of the lemma, we may define it as
\begin{equation}
    d(\kappa)\defeq\min\{\max\{a\colon \kappa_n^{(k)}\in\m^a\H^{(k)}(n)\}\colon \rho(n)=1\text{ and }n\in\mathcal{N}^{(k)}\}.
\end{equation}
\end{definition}

\begin{proposition}\label{d=0iffdel=0}
Let $0\neq \kappa\in\KS(T,\mathcal{F},\mathcal{L}).$ Then $d(\kappa)\ge \partial^{(\infty)}(\kappa).$ Moreover, $d(\kappa)=0$ if and only if $\kappa$ is primitive.
\end{proposition}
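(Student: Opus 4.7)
The plan is to first observe that $d(\kappa)\ge\partial^{(\infty)}(\kappa)$ is immediate from the definitions, since the minimum defining $d(\kappa)$ is taken over a subset --- namely the pairs $(n,k)$ with $\rho(n)=1$ and $n\in\mathcal{N}^{(k)}$ --- of the set of pairs $(n,k)$ with $n\in\mathcal{N}^{(k)}$ that defines $\partial^{(\infty)}(\kappa)$. This at once yields the ``only if'' direction of the equivalence: $d(\kappa)=0$ implies $\partial^{(\infty)}(\kappa)=0$, i.e. $\kappa$ is primitive.

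For the ``if'' direction, I would start from the hypothesis that $\kappa$ is primitive, giving a witness pair $(n_0,k_0)$ with $n_0\in\mathcal{N}^{(k_0)}$ and $\kappa_{n_0}^{(k_0)}\notin\m\H^{(k_0)}(n_0)$. The crucial observation is that $n_0$ is automatically a core vertex. Indeed, by \cite[Lemma 1.6.4]{Howard} --- the same input used in the proof of \cref{equal-ideals} --- one always has $\kappa_{n_0}^{(k_0)}\in\mathcal{S}^{(k_0)}(n_0)=\m^{\lambda^{(k_0)}(n_0)}\H^{(k_0)}(n_0)$. If $\lambda^{(k_0)}(n_0)\ge 1$, this would put $\kappa_{n_0}^{(k_0)}$ inside $\m\H^{(k_0)}(n_0)$, contradicting the choice of witness. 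Hence $\lambda^{(k_0)}(n_0)=0$, so $M^{(k_0)}(n_0)=0$, which under \eqref{epsilon-1} means $\rho(n_0)=1$.

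Once $n_0$ is known to be a core vertex, I would apply \cref{same-ideals-artinian} to the Artinian Kolyvagin system $\kappa^{(k_0)}$ to get $\kappa_{n_0}^{(k_0)}R^{(k_0)}=\m^{d_{k_0}}R^{(k_0)}$, where $d_{k_0}$ is the Artinian invariant at level $k_0$. The assumption $\kappa_{n_0}^{(k_0)}\notin\m R^{(k_0)}$ then forces $d_{k_0}=0$. Invoking the monotonicity and stabilization of $(d_k)_{k\ge 1}$ observed in the proof of \cref{equal-ideals} --- namely that $d_k\le d_{k+1}$ and $d_{k+1}=d_k$ whenever $d_k<k$ --- the equality $d_{k_0}=0<k_0$ propagates to $d_k=0$ for all $k\ge k_0$, while monotonicity gives $d_k=0$ for $k\le k_0$ as well. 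Therefore $d(\kappa)=\lim_k d_k=0$.

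The key insight (and the only delicate step) is the recognition that any primitivity witness must occur at a core vertex, obtained for free from the baseline inclusion $\kappa_n^{(k)}\in\mathcal{S}^{(k)}(n)$. This entirely sidesteps what would otherwise be the obvious technical obstacle: showing that the property ``$\kappa_n^{(k)}\notin\m\H^{(k)}(n)$'' descends through the reduction $\H^{(k)}(n)\to\H^{(1)}(n)$, which requires control over the kernel of this map via a long exact sequence argument using \eqref{H.1}. Once the core-vertex reduction is in hand, the rest is a direct bookkeeping application of the Artinian-level results already established.
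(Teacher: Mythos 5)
Your opening claim and the ``only if'' direction are correct and agree with the paper: both come down to the fact that $d(\kappa)$ is a minimum over the subfamily of pairs $(n,k)$ with $\rho(n)=1$. For the ``if'' direction you take a different route from the paper, which identifies primitivity with $\kappa^{(1)}\neq 0$, picks a witness $n\in\mathcal{N}^{(1)}$, and applies \cref{equal-ideals} at $k=1$ (where $\mathcal{N}^{(2k-1)}=\mathcal{N}^{(1)}$ and $\m$ kills everything). You instead stay at the level $k_0$ of the witness and argue that the witness vertex is forced to be a core vertex; the subsequent steps (deducing $d_{k_0}=0$ from \cref{same-ideals-artinian} and propagating $d_k=0$ to all $k$ via the monotonicity and stabilization of $(d_k)$ recorded in the proof of \cref{equal-ideals}) are fine.

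The gap is in the step you describe as coming ``for free.'' The containment $\kappa_{n_0}^{(k_0)}\in\mathcal{S}^{(k_0)}(n_0)$ from \cite[Lemma 1.6.4]{Howard} is not available for an arbitrary $n_0\in\mathcal{N}^{(k_0)}$: as the hypothesis $n\in\mathcal{N}^{(2k-1)}$ in \cref{equal-ideals} (which refines that lemma) reflects, the stub property at level $k_0$ is only established for $n_0\in\mathcal{N}^{(2k_0-1)}$, a strictly smaller set once $k_0\ge 2$. A witness for $\partial^{(\infty)}(\kappa)=0$ only supplies $n_0\in\mathcal{N}^{(k_0)}$. To place yourself in the range of validity you would have to pass to a level $j$ with $2j-1\le k_0$ and verify that $\kappa_{n_0}^{(j)}\notin\m\H^{(j)}(n_0)$ --- which is precisely the reduction-through-levels problem you claim to have sidestepped, now reappearing in a different spot. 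Your argument is therefore complete only when the witness can be taken at level $1$ (so that $\mathcal{N}^{(2k_0-1)}=\mathcal{N}^{(k_0)}$), i.e.\ when primitivity is unwound as $\kappa^{(1)}\neq 0$, which is exactly what the paper's proof does. Either restrict to a level-one witness at the outset, or justify the application of the stub property beyond $\mathcal{N}^{(2k_0-1)}$.
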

\begin{proof}
By the definition of $d(\kappa)$ and $\partial^{(\infty)}(\kappa),$ it is immediate that $d(\kappa)\ge\partial^{(\infty)}(\kappa).$

For the second claim, it suffices to prove $\partial^{(\infty)}(\kappa)=0\implies d(\kappa)=0.$ We note that $\partial^{(\infty)}(\kappa)=0$ is the same as $\kappa^{(1)}\neq 0.$ Let $n\in\mathcal{N}^{(1)}$ be such that $\kappa^{(1)}_n\neq 0.$ By \cref{equal-ideals}, we have $0\neq\kappa^{(1)}_nR^{(1)}=\m^{d(\kappa)}\mathcal{S}^{(1)}(n).$ So $\m^{d(\kappa)}\mathcal{S}^{(1)}(n)\neq 0.$ This means that $d(\kappa)=0.$
\end{proof}

\begin{proposition}\label{d=infty-iff-0}
Let $\kappa\in\KS(T,\mathcal{F},\mathcal{L}).$ Then $d(\kappa)=\infty$ if and only if $\kappa=0.$
\end{proposition}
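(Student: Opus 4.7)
The plan is to reduce to the Artinian statement \cref{d=k-iff-0} by exploiting the precise structure of the Artinian invariants $d_k$ built in the proof of \cref{equal-ideals}. For the easy direction, if $\kappa = 0$, then $\kappa_n^{(k)} = 0$ for every $n \in \mathcal{N}^{(k)}$ and every $k$, so $\max\{a \colon \kappa_n^{(k)} \in \m^a \H^{(k)}(n)\} = \infty$ for each core vertex, and taking the minimum gives $d(\kappa) = \infty$.

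For the nontrivial direction, I would first isolate a rigidity property of the sequence $(d_k)$ produced in the proof of \cref{equal-ideals}: it satisfies $0 \le d_k \le k$, is non-decreasing in $k$, and moreover $d_{k+1} = d_k$ whenever $d_k < k$. Consequently, if $d_{k_0} < k_0$ for some $k_0$, the sequence stabilizes at $d_{k_0}$ and $d(\kappa) = d_{k_0} < \infty$. Contrapositively, the assumption $d(\kappa) = \infty$ forces $d_k = k$ for \emph{every} $k \ge 1$.

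Next, I would apply \cref{d=k-iff-0} to each reduction $\kappa^{(k)} \in \KS(T^{(k)}, \mathcal{F}, \mathcal{L}^{(k)})$: the ring $R^{(k)}$ is principal Artinian of length $k$, the hypothesis $p > 4$ is preserved, and the standing assumption $\mathcal{L} \supseteq \mathcal{L}_s(T)$ propagates to $\mathcal{L}^{(k)} \supseteq \mathcal{L}_{\max(s,k)}(T^{(k)})$. Since $d_k = k$, \cref{d=k-iff-0} gives $\kappa^{(k)} = 0$ for every $k \ge 1$. To conclude $\kappa = 0$, I would note that for any $n \in \mathcal{N}$ the ideal $I_n = \sum_{l \mid n} I_l$ in the DVR $R$ is principal, say $I_n = \m^{a_n}$; then $n \in \mathcal{N}^{(a_n)}$ and $T/I_nT = T^{(a_n)}$, so $\kappa_n$ coincides with $\kappa_n^{(a_n)}$, which vanishes by the previous step.

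The main obstacle is the rigidity claim $d_{k+1} = d_k$ when $d_k < k$, which must be extracted carefully from the inductive construction of the $d_k$ in \cref{equal-ideals}; once this rigidity is in hand, the remainder is a routine level-by-level reduction.
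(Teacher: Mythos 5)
Your proof is correct and follows the same route as the paper: the monotonicity and stabilization property of the sequence $(d_k)$ (which is in fact already recorded explicitly in the proof of \cref{equal-ideals}, so no further extraction is needed) forces $d_k=k$ for all $k$ when $d(\kappa)=\infty$, whence \cref{d=k-iff-0} gives $\kappa^{(k)}=0$ for all $k$, and the level-by-level identification $T/I_nT=T^{(a_n)}$ recovers $\kappa=0$ (with the trivial separate remark that $\kappa_1=0$ follows from $\bigcap_k\m^k\H(1)=0$, since $I_1=0$ is not of the form $\m^{a_n}$). The paper's proof is just a terser version of exactly this argument.
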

\begin{proof}
If $d(\kappa)=\infty,$ we must have $d_k=k$ for all $k.$ But then, by \cref{d=k-iff-0}, this means that $\kappa^{(k)}=0$ for all $k.$ This can only happen if $\kappa=0.$
\end{proof}
\begin{remark}
One might expect that, in fact, $d(\kappa)\isequal\partial^{(\infty)}(\kappa).$ In the case where $(T,\mathcal{F},\mathcal{L})=(T_pE,\BK,\mathcal{L}_1(T_pE))$ for $E/\Q$ an elliptic curve with $E/K$ of analytic rank $1,$ $T_pE$ its Tate module and $\BK$ the Bloch-Kato Selmer structure (see \Cref{finite-level-kol}), this equality holds true by \cref{main-dvr} below and \cite[Theorem D]{Kolyvagin}.
\end{remark}

Finally, we can improve the main result of \cite[Section 1.6]{Howard} by providing the error term $d(\kappa)$ in the formula \eqref{length-formula} below. Denote $\mathcal{D}\defeq\mathrm{Frac}(R)\slash R.$
\begin{theorem}\label{main-dvr}
Let $(T,\mathcal{F},\mathcal{L})$ be a Selmer triple such that both $(T,\mathcal{F})$ and $(T^*,\mathcal{F}^*)$ satisfy \eqref{H.0}-\eqref{H.5}. Assume $p>4$ and that there is $s\ge 1$ with $\mathcal{L}\supseteq\mathcal{L}_s(T).$ Suppose there exists $\kappa\in\KS(T,\mathcal{F},\mathcal{L})$ with $\kappa_1\neq 0.$ Then $\Cohomology{1}{\mathcal{F}}{K}{T}$ is a free rank-one $R$ module, and there is a finite $R$-module $M$ such that
\begin{equation}
    \Cohomology{1}{\mathcal{F}}{K}{A}\iso \mathcal{D}\oplus M\oplus M.
\end{equation}
Moreover, we have
\begin{equation}[length-formula]
    \length[R]{M}= \length[R]{\Cohomology{1}{\mathcal{F}}{K}{T}/R\cdot\kappa_1}-d(\kappa).
\end{equation}
In particular, we have $\length[R]{M}\le \length[R]{\Cohomology{1}{\mathcal{F}}{K}{T}/R\cdot\kappa_1}$ with equality if and only if $\kappa$ is primitive.
\end{theorem}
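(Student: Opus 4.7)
The claims that $\Cohomology{1}{\mathcal{F}}{K}{T}$ is free of rank one and that $\Cohomology{1}{\mathcal{F}}{K}{A}\iso\mathcal{D}\oplus M\oplus M$ for a finite $R$-module $M$ are already in Howard: the rank claim follows from global duality together with $\kappa_1\neq 0$, while the structure of $\Cohomology{1}{\mathcal{F}}{K}{A}$ is the Pontryagin dual of the Cassels--Tate-type decomposition \cref{H(n)-structure} at $n=1$, combined with hypothesis \eqref{epsilon-1}. Thus the new content is the precise equality \eqref{length-formula} refining Howard's original inequality, and the plan is to deduce it from \cref{equal-ideals} by working at finite level and then taking a limit.

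Fix $k$ large. By \cref{H(n)-structure} and \eqref{epsilon-1} we have $\mathcal{H}^{(k)}(1)\iso R^{(k)}\oplus M^{(k)}(1)\oplus M^{(k)}(1)$. A standard control-theorem argument using the sequence $0\to T\xrightarrow{\pi^k}T\to T/\m^k T\to 0$ together with the torsion-freeness of $\Cohomology{1}{\mathcal{F}}{K}{T}$ shows that the reduction map identifies the free summand $R^{(k)}$ of $\mathcal{H}^{(k)}(1)$ with $\Cohomology{1}{\mathcal{F}}{K}{T}/\m^k\Cohomology{1}{\mathcal{F}}{K}{T}\iso R^{(k)}$, and that $\length[R^{(k)}]{M^{(k)}(1)}=\length[R]{M}$ once $k$ exceeds $\length[R]{M}$. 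Consequently, $\mathcal{S}^{(k)}(1)=\m^{\lambda^{(k)}(1)}\mathcal{H}^{(k)}(1)$ coincides with the cyclic submodule $\m^{\length{M}}R^{(k)}$ sitting inside the free summand.

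Applying \cref{equal-ideals} to $n=1\in\mathcal{N}^{(2k-1)}$ then yields
\begin{equation}
R\cdot \kappa_1^{(k)}=\m^{d(\kappa)}\mathcal{S}^{(k)}(1)=\m^{d(\kappa)+\length{M}}R^{(k)},
\end{equation}
a cyclic submodule of length $k-d(\kappa)-\length{M}$ contained in the free summand. Since $\kappa_1^{(k)}$ is the mod-$\m^k$ reduction of $\kappa_1\in \Cohomology{1}{\mathcal{F}}{K}{T}$, letting $k\to\infty$ forces, under the identification $\Cohomology{1}{\mathcal{F}}{K}{T}\iso R$, that $R\cdot\kappa_1=\m^{d(\kappa)+\length{M}}R$, and hence $\length{\Cohomology{1}{\mathcal{F}}{K}{T}/R\cdot\kappa_1}=d(\kappa)+\length{M}$. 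The ``in particular'' clause then reduces to $d(\kappa)\ge 0$ for the inequality and to \cref{d=0iffdel=0} for the characterization of equality.

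The main obstacle is the control step: ensuring that $\kappa_1^{(k)}$ lands in the free summand of $\mathcal{H}^{(k)}(1)$ and not in one of the $M^{(k)}(1)$ factors, and that $\length{M^{(k)}(1)}$ stabilizes to $\length{M}$. Both are implicit in Howard's proof of the corresponding inequality; what is new is only to verify that the Poitou--Tate identifications interact cleanly with the refined localization bookkeeping of \cref{equal-ideals}, so that the free-summand statement upgrades the length inequality to the exact formula.
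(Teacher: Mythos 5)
Your proposal is correct and follows essentially the same route as the paper: cite Howard's Theorem 1.6.1 for the structural claims, apply \cref{equal-ideals} at $n=1$ to get $\kappa_1^{(k)}R^{(k)}=\m^{d(\kappa)}\mathcal{S}^{(k)}(1)=\m^{d(\kappa)+\length{M}}\H^{(k)}(1)$ for $k$ large, lift via the injectivity of $\H(1)/\m^k\H(1)\to\H^{(k)}(1)$, and finish with \cref{d=0iffdel=0}. The "obstacle" you flag at the end (that $\kappa_1^{(k)}$ lies in the stub) is already built into the conclusion of \cref{equal-ideals}, so nothing further is needed there.
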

\begin{proof}
Except for the last two statements, this is \cite[Theorem 1.6.1]{Howard}. Note that, in particular, \eqref{epsilon-1} also follows from the assumptions of the theorem.

For $k$ sufficiently large, we have $\H^{(k)}(1)\iso R/\m^k\oplus M\oplus M,$ and by \cref{equal-ideals}, we have $\kappa_1^{(k)}R^{(k)}=\m^{d(\kappa)}\mathcal{S}^{(k)}(1).$ Let $\lambda\defeq\length{M}=\lambda^{(k)}(1).$ If $k>d(\kappa)+\lambda,$ by the injectivity of
\begin{equation}
    \H(1)/\m^k\H(1)\to\H^{(k)}(1),
\end{equation}
we have $\kappa_1 R=\m^{\lambda+d(\kappa)}\H(1).$ Hence 
\begin{equation}
    \length{M}=\length{\Cohomology{1}{\mathcal{F}}{K}{T}/R\cdot\kappa_1}-d(\kappa).
\end{equation}

The last statement now follows from \cref{d=0iffdel=0}.
\end{proof}

\subsection{Iwasawa modules}\label{section2.4}
For this section, we consider $T=T_p(E)$ to be the Tate module of an elliptic curve $E/\Q.$ We assume $(N_E,D_K)=1$ and also \eqref{good}, \eqref{p-big}, \eqref{res-surj}.

Recall that $\Lambda=\Z_p\grpring{\Gal{K_\infty}{K}}$ where $K_\infty$ is the anticyclotomic extension of $K.$ We let $\T\defeq T\otimes_{\Z_p}\Lambda,$ where the Galois action is on both factors.

Let $\P\neq p\Lambda$ be a fixed height-one prime of $\Lambda.$ Let $R=S_\P$ be the integral closure of $\Lambda/\P.$ We note that $S_\P$ is a discrete valuation ring and a finite extension of $\Z_p.$ We let $T_\P\defeq T\otimes_{\Z_p} S_\P$ with Galois action on both factors, which can also be described as $T_\P=\T\otimes_\Lambda S_\P.$

Recall that we consider a Selmer condition $\FLambda$ for $\T$ which depends on the type of reduction of $E$ over $p.$ In the ordinary case, this is \cite[Definition 3.2.2]{Howard2} and in the supersingular case this is \cite[Section 3.3]{Castella-Wan}.

Let $\mathcal{F}_\P$ be the Selmer condition on $T_\P$ induced by $\FLambda.$ It satisfies \eqref{H.0}-\eqref{H.5} by \cite[Proposition 2.1.3]{Howard} and \cite[Proof of Theorem 5.12]{Castella-Wan}.

With \cref{main-dvr}, we immediately obtain:
\begin{proposition}\label{main-heightone}
Fix a positive integer $s$ and a set of primes $\mathcal{L}\supseteq\mathcal{L}_s(T_\P),$ and suppose that there exist $\kappa\in\KS(T_\P,\mathcal{F}_\P,\mathcal{L})$ with $\kappa_1\neq 0.$ Then $\Cohomology{1}{\mathcal{F}_\P}{K}{T_\P}$ is a free, rank $1$ $S_\P$-module, and
\begin{equation}
    \Cohomology{1}{\mathcal{F}_\P}{K}{A_\P}\iso \mathcal{D}_\P\oplus M_\P\oplus M_\P
\end{equation}
where $M_\P$ is a finite $S_\P$-module with
\begin{equation}
    \length[S_\P]{M_\P}= \length[S_\P]{\Cohomology{1}{\mathcal{F}_\P}{K}{T_\P}/S_\P\cdot \kappa_1}-d(\kappa).
\end{equation}
\end{proposition}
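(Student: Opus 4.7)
The plan is to simply invoke \cref{main-dvr} with $R = S_\P$ and the Selmer triple $(T_\P, \mathcal{F}_\P, \mathcal{L})$. The entire content of the proof is to check that all hypotheses of that theorem are satisfied in the current Iwasawa-theoretic specialization setting, and then read off its three conclusions verbatim.

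First, I would verify the ring-theoretic and numerical hypotheses: $S_\P$ is a discrete valuation ring by construction (integral closure of $\Lambda/\P$ for a height-one prime $\P \ne p\Lambda$), the assumption $p > 4$ follows from the global standing hypothesis \eqref{p-big}, the hypothesis $\mathcal{L} \supseteq \mathcal{L}_s(T_\P)$ is given, and the existence of $\kappa \in \KS(T_\P, \mathcal{F}_\P, \mathcal{L})$ with $\kappa_1 \neq 0$ is assumed.

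Next, I would verify that both $(T_\P, \mathcal{F}_\P)$ and $(T_\P^*, \mathcal{F}_\P^*)$ satisfy \eqref{H.0}--\eqref{H.5}. The paper has already flagged where this is done: \cite[Proposition 2.1.3]{Howard} handles the ordinary case and \cite[Proof of Theorem 5.12]{Castella-Wan} handles the supersingular case, using the standing hypotheses \eqref{Heeg}, \eqref{good}, \eqref{p-big}, \eqref{res-surj} (and, in the supersingular case, \eqref{split}). The self-duality hypothesis identifies the triples $(T_\P, \mathcal{F}_\P)$ and $(T_\P^*, \mathcal{F}_\P^*)$ up to twist via the Weil pairing on $T = T_pE$, so the conditions transfer to the dual triple.

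With these hypotheses in place, \cref{main-dvr} gives directly that $\Cohomology{1}{\mathcal{F}_\P}{K}{T_\P}$ is a free $S_\P$-module of rank one, that $\Cohomology{1}{\mathcal{F}_\P}{K}{A_\P} \iso \mathcal{D}_\P \oplus M_\P \oplus M_\P$ for some finite $S_\P$-module $M_\P$, and that the length formula
\begin{equation}
\length[S_\P]{M_\P} = \length[S_\P]{\Cohomology{1}{\mathcal{F}_\P}{K}{T_\P}/S_\P\cdot\kappa_1} - d(\kappa)
\end{equation}
holds. There is no genuine obstacle here beyond the bookkeeping of the hypotheses, since the mathematical work has been done in the previous subsection; the proposition is a formal specialization of \cref{main-dvr}.
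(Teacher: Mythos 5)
Your proposal matches the paper's argument exactly: the paper notes just before the proposition that $\mathcal{F}_\P$ satisfies \eqref{H.0}--\eqref{H.5} by \cite[Proposition 2.1.3]{Howard} and \cite[Proof of Theorem 5.12]{Castella-Wan}, and then states that the proposition follows immediately from \cref{main-dvr}. Your additional bookkeeping of the remaining hypotheses ($S_\P$ a DVR, $p>4$ from \eqref{p-big}, the given $\mathcal{L}$ and $\kappa_1\neq 0$) is correct and fills in exactly what the paper leaves implicit.
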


As in \cite[Definition 5.3.9]{Mazur-Rubin}, we define the notion of a Kolyvagin system being $\Lambda$-primitive, and also of being primitive.
\begin{definition}\label{def-lambda-prim}
    Let $\kappa\in\KS(\T,\mathcal{F}_\Lambda,\mathcal{L})$ with $\mathcal{L}\supseteq\mathcal{L}_s$ for some $s.$ We say $\kappa$ is $\Lambda$\emph{-primitive} if for all height-one primes $\P$ of $\Lambda,$ there is $k=k(\P)\in\Z_{>0}$ such that the image of $\kappa$ as a Kolyvagin system for $\KS(\T/(\P,\m^k),\mathcal{F}_\Lambda,\mathcal{L}_j)$ is nonzero for all $j\ge s.$
\end{definition}

\begin{definition}
    Let $\kappa\in\KS(\T,\mathcal{F}_\Lambda,\mathcal{L})$ with $\mathcal{L}\supseteq\mathcal{L}_s$ for some $s.$ We say $\kappa$ is \emph{primitive} if its image as a Kolyvagin system for $\KS(\T/\m,\FLambda,\mathcal{L})$ is nonzero.
\end{definition}

\begin{proposition}\label{prim=>lambda-prim}
    Let $\kappa\in\KS(\T,\mathcal{F}_\Lambda,\mathcal{L})$ with $\mathcal{L}\supseteq\mathcal{L}_s$ for some $s.$ If $\kappa$ is primitive, then it is also $\Lambda$-primitive.
\end{proposition}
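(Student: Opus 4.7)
The plan is to show that $k(\P)=1$ always suffices. Since the height-one prime $\P$ is contained in the maximal ideal $\m,$ one has $(\P,\m^1)=\m,$ so $\T/(\P,\m^1)\T=\T/\m\T,$ and the goal reduces to showing that for each $j\ge s$ the reduction $\bar\kappa$ of $\kappa$ modulo $\m$ has some nonzero component $\bar\kappa_n$ with $n\in\mathcal{N}(\mathcal{L}_j).$

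The first step is to apply the Artinian theory of the previous subsection to $\bar\kappa,$ viewed as a Kolyvagin system over the residue field $\Lambda/\m$ (a principal Artinian coefficient ring of length one). Primitivity of $\kappa$ says $\bar\kappa\ne0,$ so \cref{d=k-iff-0} gives $d(\bar\kappa)=0,$ and \cref{same-ideals-artinian} then yields $\bar\kappa_n\ne0$ at every core vertex $n.$ The proposition therefore reduces to producing, for each $j\ge s,$ a core vertex whose prime factors all lie in $\mathcal{L}_j.$

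To this end I would rerun the Chebotarev-based construction behind \cref{connected}: starting from $n=1$ and iteratively invoking \cref{chebotarev-handy}, each step strictly decreases $r(n)=\max(\rho(n)^+,\rho(n)^-)$ by adjoining an auxiliary prime $\ell$ on which the relevant localizations are nontrivial, until one arrives at $r=1.$ The only adjustment needed is that these auxiliary primes be constrained to $\mathcal{L}_j$ rather than merely to $\mathcal{L}.$ This is achieved by feeding \cref{chebotarev-lemma} the parameter $s'=j$ at the $\Lambda$-level: the lemma's statement is uniform in $s'\ge1,$ and for $T=\T$ over $\Lambda$ it delivers primes in $\mathcal{L}_{s'}(\T)=\mathcal{L}_j(\T)=\mathcal{L}_j$ simultaneously satisfying every required local nonvanishing condition.

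After finitely many iterations one obtains a core vertex $n=\ell_1\cdots\ell_r\in\mathcal{N}(\mathcal{L}_j);$ because being a core vertex is intrinsic ($\rho(n)=1$), $n$ is also a core vertex of the ambient graph and hence $\bar\kappa_n\ne0$ by the first step. Consequently $\kappa_n\bmod(\P,\m)=\bar\kappa_n\ne0,$ proving $\Lambda$-primitivity. The principal technical point is ensuring the Chebotarev construction can be restricted to $\mathcal{L}_j$ without losing the nonvanishing of $\loc{\ell}$ on $\overline{\mathcal{H}}(n)^{\pm},$ which is essentially cosmetic given the flexibility of \cref{chebotarev-lemma} in the parameter $s'.$
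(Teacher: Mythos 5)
Your proposal is correct and follows essentially the same route as the paper: reduce to $k(\P)=1$ via $(\P,\m)=\m$, use \cref{d=k-iff-0} and \cref{same-ideals-artinian} over the length-one ring $\Lambda/\m$ to get $\overline{\kappa}_n\neq0$ at every core vertex, and invoke \cref{chebotarev-handy} to produce core vertices in $\mathcal{N}^{(j)}$ for each $j\ge s$. The paper's proof is just a terser version of the same argument.
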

\begin{proof}
    We first note that the image of $\kappa$ in $\KS(\T,\mathcal{F}_\Lambda,\mathcal{L}_j)$ is also primitive for $j\ge s.$ Indeed, since $\overline{\kappa}\in\KS(\T/\m,\FLambda,\mathcal{L})$ is nonzero and $\T/\m\iso\F_p$ has length $1,$ we must have $d=0$ in \cref{same-ideals-artinian} by \cref{d=k-iff-0}. This mean that $\overline{\kappa}_n\neq 0$ for any core vertex $n.$ By \cref{chebotarev-handy} there are core vertices $n\in\mathcal{N}^{(j)}$ for any $j,$ and so by the above the image of $\kappa$ in $\KS(\T/\m,\mathcal{F}_\Lambda,\mathcal{L}_j)$ is nonzero for any $j\ge s.$
    
    Now, since all height-one primes $\P$ satisfy $(\P,\m)=\m,$ we can choose $k(\P)=1$ in \cref{def-lambda-prim} to conclude that $\kappa\in\KS(\T,\mathcal{F}_\Lambda,\mathcal{L})$ is $\Lambda$-primitive.
\end{proof}

Similarly to \cite[Theorem 5.3.10(iii)]{Mazur-Rubin}, we can now improve
\cite[Theorem 2.2.10]{Howard} to:
\begin{theorem}\label{Howard-original+}
Let $X=\Hom(\Cohomology{1}{\FLambda}{K}{\A},\Q_p/\Z_p)$ and suppose that for some $s$ there exist $\kappa\in\KS(\T,\FLambda,\mathcal{L}_s)$ with $\kappa_1\neq 0.$ Then
\begin{enumerate}[label=(\alph*)]
    \item $\Cohomology{1}{\FLambda}{K}{\T}$ is a torsion-free, rank one $\Lambda$-module,
    \item there is a torsion $\Lambda$-module $M$ such that $\Char(M)=\Char(M)^\iota$ and a pseudo-isomorphism
    \begin{equation}
        X\sim \Lambda\oplus M\oplus M,
    \end{equation}
    \item $\Char(M)\mid \Char(\Cohomology{1}{\FLambda}{K}{\T}/\Lambda\kappa_1),$ with equality if $\kappa$ is $\Lambda$-primitive. 
\end{enumerate}
\end{theorem}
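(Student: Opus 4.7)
Parts (a) and (b) and the divisibility in (c) are already in \cite[Theorem 2.2.10]{Howard} (ordinary case) and \cite[Theorem 5.12]{Castella-Wan} (supersingular case), so the real task is to prove the equality in (c) assuming $\Lambda$-primitivity. The plan is to import the refined length formula of \cref{main-heightone} at each height-one prime of $\Lambda$ and glue these back together, in the spirit of \cite[Theorem 5.3.10(iii)]{Mazur-Rubin}.

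First I would fix a height-one prime $\P\neq p\Lambda$ and specialize $\kappa$ to a Kolyvagin system $\kappa_\P$ over the DVR $S_\P$. Since $\Cohomology{1}{\FLambda}{K}{\T}$ is torsion-free of rank one by (a) and $\kappa_1\neq 0$, the class $\kappa_{\P,1}$ is nonzero for all but finitely many $\P$; at such $\P$, \cref{main-heightone} gives
\begin{equation}
\length[S_\P]{M_\P} = \length[S_\P]{\Cohomology{1}{\mathcal{F}_\P}{K}{T_\P}/S_\P\cdot \kappa_{\P,1}} - d(\kappa_\P).
\end{equation}
Howard's control-theoretic analysis in \cite[\S2.2]{Howard} identifies the $S_\P$-lengths on both sides with the $\P$-components of $\Char(M)$ and $\Char(\Cohomology{1}{\FLambda}{K}{\T}/\Lambda\kappa_1)$ up to a common ramification factor, so the $\P$-local equality of characteristic ideals reduces to showing $d(\kappa_\P)=0$.

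The crux is thus to deduce $d(\kappa_\P)=0$ from $\Lambda$-primitivity. Unpacking \cref{def-lambda-prim}, the assumption supplies a $k=k(\P)$ such that, for every $j\geq s$, some $n_j\in\mathcal{N}(\mathcal{L}_j)$ satisfies $\kappa_{n_j}\not\equiv 0\pmod{(\P,\m^k)}$. I would transport these nonvanishing statements into $S_\P$ via the natural map $\Lambda/(\P,\m^k)\hookrightarrow S_\P/\m_{S_\P}^{k'}$, with $k'$ determined by $k$ and the ramification of $S_\P$ over $\Lambda/\P$. Combined with the core-vertex machinery of \Cref{section2.1} --- specifically \cref{connected,same-ideals-artinian,d=0iffdel=0} --- this should rule out any $\m_{S_\P}$-divisibility of $\kappa_\P$ at a core vertex and thereby force $d(\kappa_\P)=0$.

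The finitely many exceptional primes where $\kappa_{\P,1}=0$, as well as $\P=p\Lambda$, can then be disposed of via the $\mu$-invariant argument of \cite[proof of Theorem 2.2.10]{Howard}, with $\Lambda$-primitivity ensuring that the two $\mu$-invariants agree. I expect the principal obstacle to lie in the previous paragraph: because $k=k(\P)$ is allowed to depend on $\P$, the $\Lambda$-primitivity condition a priori only guarantees that $d(\kappa_\P)<\infty$, and actually ruling out positive values of $d(\kappa_\P)$ will require careful, level-by-level exploitation of the ``for all $j\geq s$'' quantifier together with the connectedness of the core-vertex graph at each Artinian level.
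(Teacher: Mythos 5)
There is a genuine gap, and you have in fact located it yourself in your last paragraph: your reduction of the $\P$-local equality to ``$d(\kappa_\P)=0$'' is the wrong target, and $\Lambda$-primitivity does not supply it. Two things go wrong. First, the specialization map $\Cohomology{1}{\FLambda}{K}{\T}/\P\to\Cohomology{1}{\mathcal{F}_\P}{K}{T_\P}$ only has finite kernel and cokernel (bounded in terms of $\Index{S_\P}{\Lambda/\P}$), so a single specialization at $\P$ identifies the two $S_\P$-lengths with $\mathrm{ord}_\P$ of the respective characteristic ideals only up to a bounded error; one cannot read off the exact $\P$-exponents from one specialization, with or without knowing $d(\kappa_\P)$. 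Second, and more importantly, the paper never proves $d(\kappa^{(\P)})=0$ — and there is no reason it should hold. The actual argument deforms $\P$ to a family of height-one primes $\mathfrak{Q}_m=(g+p^m)\Lambda$ (resp.\ $(p+T^m)\Lambda$ when $\P=p\Lambda$), for which both main terms in the refined formula of \cref{main-heightone} grow like $m\cdot d\cdot\mathrm{ord}_\P(\cdot)$ up to $O(1)$, so that the error term $d(\kappa^{(\mathfrak{Q}_m)})$ only needs to be \emph{bounded independently of $m$} to be washed out in the limit $m\to\infty$. That boundedness is exactly what $\Lambda$-primitivity delivers: since $\kappa^{(\P)}\neq 0$ (as in \cite[Lemma 5.3.20]{Mazur-Rubin}), one has $d(\kappa^{(\P)})<\infty$, hence some core vertex $n$ and level $k$ with $\kappa_n^{(\P,k)}\neq 0$; for $m\ge k$ the isomorphism $T_\P/\m^k\iso T_{\mathfrak{Q}_m}/\m^k$ of $G_K$-modules transports this to $\kappa_n^{(\mathfrak{Q}_m,k)}\neq 0$, giving $d(\kappa^{(\mathfrak{Q}_m)})\le k$ uniformly.

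Your proposal is missing this deformation-and-limit mechanism entirely, and without it the ``level-by-level exploitation'' you hope for cannot force $d(\kappa_\P)=0$: the definition of $\Lambda$-primitivity quantifies over the auxiliary levels $j\ge s$ of the prime sets $\mathcal{L}_j$, not over the $\m$-adic depth $k$, so it genuinely only yields $d(\kappa^{(\P)})<\infty$. The exceptional primes (where $\kappa_1^{(\P)}=0$, or $\P=p\Lambda$) are also handled by the same deformation, not by a separate $\mu$-invariant argument. To repair the proof, replace the goal ``$d(\kappa_\P)=0$'' with ``$d(\kappa^{(\mathfrak{Q}_m)})=O(1)$ as $m\to\infty$'' and run the comparison of $\Z_p$-lengths along the family $\mathfrak{Q}_m$.
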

\begin{proof}
The only difference to \cite[Theorem 2.2.10]{Howard} and \cite[Theorem 3.4.3]{Howard2} (in the ordinary case) and \cite[Theorem 5.12]{Castella-Wan} (in the supersingular case) is the last part of $(c).$ We recall their proofs of the divisibility in $(c)$ below, and show how one can adapt it to obtain the stronger claim when $\kappa$ is $\Lambda$-primitive.

The idea of the proof is that for almost all height-one primes $\P,$ the image $\kappa^{(\P)}$ of $\kappa$ under $\KS(\T,\FLambda,\mathcal{L}_s(\T))\to\KS(T_\P,\mathcal{F}_\P,\mathcal{L}_s(T_\P))$ will satisfy the hypothesis of \cref{main-heightone}. Even if these hypothesis do not hold for $\P$ itself, we may deform it to other primes $\mathfrak{Q}_m$ and use \cref{main-heightone} for almost all $\mathfrak{Q}_m$ to compare the exponent of $\P$ in both sides of the divisibility in $(c).$

In the proofs of both \cite[Theorem 3.4.3]{Howard2} and \cite[Theorem 5.12]{Castella-Wan}, it is shown that there is a finite set $\Sigma_\Lambda$ of height one primes of $\Lambda$ such that if $\P\not\in\Sigma_\Lambda,$ the map
\begin{equation}
    \Cohomology{1}{\FLambda}{K}{\T}/\P\Cohomology{1}{\FLambda}{K}{\T}\to\Cohomology{1}{\mathcal{F}_\P}{K}{T_\P}
\end{equation}
has finite kernel and cokernel bounded by a constant that depend only on $\Index{S_\P}{\Lambda/\P}.$ Together with $(a),$ this means that if $\P\not\in\Sigma_\Lambda,$ then $\kappa_1^{(\P)}$ is nonzero.

Let $f_\Lambda\defeq \Char(\Cohomology{1}{\FLambda}{K}{\T}/\Lambda\kappa_1).$ Let $\P$ denote a height-one prime. If $\P\neq p\Lambda,$ fix a generator $g$ of $\P$ and let $\mathfrak{Q}=(g+p^m)\Lambda.$ If $\P=p\Lambda,$ let $\mathfrak{Q}=(p+T^m)\Lambda.$ By Hensel's lemma, there exist an $N\in\Z$ such that there is an isomorphism of rings $\Lambda/\P\iso\Lambda/\mathfrak{Q}$ when $m\ge N.$ In particular, in this case, $\mathfrak{Q}$ is a height-one prime. We may also increase $N$ such that for $m\ge N$ we have that $\mathfrak{Q}$ does not divide $f_\Lambda$ nor $\Char(M),$ and that $\mathfrak{Q}\not\in\Sigma_\Lambda.$

Now let $m\ge N.$ Since $\mathfrak{Q}\not\in\Sigma_\Lambda$ and $\Lambda/\P\iso\Lambda/\mathfrak{Q}$ the kernel and cokernel of
\begin{equation}
    \Cohomology{1}{\FLambda}{K}{\T}/\mathfrak{Q}\Cohomology{1}{\FLambda}{K}{\T}\to\Cohomology{1}{\mathcal{F}_\mathfrak{Q}}{K}{T_\mathfrak{Q}}
\end{equation}
are bounded by a constant that depends only on $\P.$ Letting $d=\rank_{\Z_p}(\Lambda/\P),$ the equality $(\mathfrak{Q},\P^n)=(\mathfrak{Q},p^{mn})$ together with the above implies that, up to $O(1)$ as $m$ varies, we have
\begin{equation}
\begin{split}
    \length[\Z_p]{\Cohomology{1}{\mathcal{F}_{\mathfrak{Q}}}{K}{T_{\mathfrak{Q}}}/S_{\mathfrak{Q}}\kappa_1^{(\mathfrak{Q})}}=\length[\Z_p]{\Lambda/(f_\Lambda,\mathfrak{Q})}&=\length[\Z_p]{\Lambda/(\P^{\mathrm{ord}_\P(f_\Lambda)},\mathfrak{Q})}\\
    &=m\cdot d\cdot \mathrm{ord}_\P(f_\Lambda)
\end{split}
\end{equation}
In the same way, we have
\begin{equation}
\begin{split}
    \length[\Z_p]{M_{\mathfrak{Q}}}=\frac{1}{2}\length[\Z_p]{\Cohomology{1}{\mathcal{F}_{\mathfrak{Q}}}{K}{A_{\mathfrak{Q}}}}_{/\div}&=\frac{1}{2}\length[\Z_p]{(X/\mathfrak{Q}X)_{\Z_p\text{-}\tor}}\\
    &=m\cdot d\cdot \mathrm{ord}_\P(\Char(M))
\end{split}
\end{equation}
up to $O(1)$ as $m$ varies.

For the divisibility in $(c),$ we use the inequality in \cref{main-heightone} for $\mathfrak{Q}$ and the equations above to obtain
\begin{equation}[iwasawa-inequality]
    m\cdot d\cdot \mathrm{ord}_\P(\Char(M))\le m\cdot d\cdot \mathrm{ord}_\P(f_\Lambda)+O(1).
\end{equation}
Letting $m\to\infty$ give us $\mathrm{ord}_\P(\Char(M))\le\mathrm{ord}_\P(f_\Lambda).$

When $\kappa$ is $\Lambda$-primitive, we will be able to obtain the equality of characteristic ideals due to fact that \cref{main-heightone} provides the error term $d(\kappa^{(\mathfrak{Q})})$ in the inequality \eqref{iwasawa-inequality}, and due to the fact that such error term is well behaved under changing $m.$

The refinement of \eqref{iwasawa-inequality} is
\begin{equation}
    m\cdot d\cdot \mathrm{ord}_\P(\Char(M)) = m\cdot d\cdot \mathrm{ord}_\P(f_\Lambda) - d(\kappa^{(\mathfrak{Q})}) + O(1).
\end{equation}
To prove that $\mathrm{ord}_\P(\Char(M))=\mathrm{ord}_\P(f_\Lambda),$ it suffices to prove that $d(\kappa^{(\mathfrak{Q})})$ is bounded as $m$ varies, and for this we will use that $\kappa$ is $\Lambda$-primitive.

Exactly as in \cite[Lemma 5.3.20]{Mazur-Rubin}, we have $\kappa^{(\P)}\neq 0$ when $\kappa$ is $\Lambda$-primitive. By \cref{d=infty-iff-0}, this means that $d(\kappa^{(\P)})<\infty,$ and so we can choose some $k>0$ and $n\in\mathcal{N}^{(\P,k)}$ a core vertex such that $\kappa^{(\P,k)}_n\neq 0.$ If $m\ge k$ and $m\ge N,$ then $T_\P/\m^k\iso T_{\mathfrak{Q}}/\m^k$ as $G_K$-modules, and so we have both that $n$ is a core vertex for $\kappa^{(\mathfrak{Q})}$ and that $\kappa^{(\mathfrak{Q},k)}_n\neq 0.$ This implies that $d(\kappa^{(\mathfrak{Q})}) \le k.$
\end{proof}

\section{The Kolyvagin systems of Heegner points}\label{kolyvagin-systems-section}
In this section, we will briefly recall the construction of the several Kolyvagin systems in this paper, and will establish how they are related modulo $p.$ Then we use the results of the previous section to prove \cref{Theorem-A}.

Let $K$ be a quadratic imaginary field with $D_K<-4,$ $E/\Q$ an elliptic curve with $(N_E,D_K)=1$ satisfying \eqref{Heeg} and \eqref{res-surj}. Let $p\nmid N_ED_K$ be a prime such that \eqref{p-big} and \eqref{good}. Let $T=T_pE$ be the Tate module of $E.$

We denote by $K[m]$ the ring class field of conductor $m$ and by $K_\infty$ the anticyclotomic extension of $K,$ which is the $\Z_p$ extension inside $\cup_{k\ge0}K[p^k].$ Let $K_n/K$ be the sub-extension of $K_\infty$ of degree $p^n,$ and denote $K_\infty\cap K[1]=K_{n_0}.$ Then we let $K_n[m]\defeq K_n\cdot K[m],$ and denote by $k(n)$ the smallest $k$ such that $K_n[m]\subseteq K[mp^k].$ Note that if $n>n_0,$ this is $k(n)=n-n_0+1,$ and if $n\le n_0,$ this is $k(n)=0.$

On what follows, $\chi$ will be a finite order anticyclotomic character, that is, \begin{equation}
    \chi\colon G_K\twoheadrightarrow\Gal{K_n}{K}\to\units{\Z_p[e^{2\pi i/p^n}]}.
\end{equation}
Let $L=\Q_p[e^{2\pi i/p^n}]$ and $\O_L$ its ring of integers, with maximal ideal $\m_L.$ We assume the $n$ above is minimal, that is, that $\Ker{\chi}=G_{K_n}.$

\subsection{Finite level Kolyvagin systems}\label{finite-level-kol}
For any $m$ prime to $N,$ we let $P[m]\in E(K[m])$ be the Heegner point of conductor $m$ as in \cite[Section 5.1]{Castella-Wan}. These are points that are the images of CM points in the Jacobian $J_{N^+,N^-}$ of the Shimura curve associated to a quaternion algebra of discriminant $N^-$ and level $N^+$ under a modular parametrization
\begin{equation}
    \pi\colon J_{N^+,N^-}\to E.
\end{equation}

If a prime $l$ is split in $K,$ we denote by $\{\sigma_l,\sigma^*_l\}$ the Frobeniuses of the primes above $l$ in $K.$ These Heegner points satisfy the following norm relations for any prime $l\nmid m$ (see \cite[Proposition 5.1]{Castella-Wan}):
\begin{equation}
    \Tr_{K[ml^{k+2}]/K[ml^{k+1}]}P[ml^{k+2}]=a_lP[ml^{k+1}]-P[ml^k],\quad k\ge 0,
\end{equation}
and
\begin{equation}
    \Tr_{K[ml]/K[m]}P[ml]=\left\{\begin{array}{cl}a_lP[m]&\text{ if }l\text{ is inert,}\\(a_l-\sigma_l-\sigma^*_l)P[m]&\text{ if }l\text{ is split.}\end{array}\right.
\end{equation}
So, we may write $\Tr_{K[mp^k]/K[m]}P[mp^k]=\gamma_{k-1}P[m]$ for
\begin{equation}
    \gamma_0=\left\{\begin{array}{cc}a_p&\text{ if }p\text{ is inert,}\\a_p-\sigma_p-\sigma_p^*&\text{ if }p\text{ is split,}\end{array}\right.\quad \gamma_1=a_p\gamma_0-\delta,\quad\text{and}\quad \gamma_{k+2}=a_p\gamma_{k+1}-p\gamma_k\text{ for }k\ge1,
\end{equation}
where $\delta=\frac{\abs{\units{(\O_K/p\O_K)}}}{\abs{\units{(\Z/p\Z)}}}=\abs{\Gal{K[mp]}{K[m]}}.$ Hence
\begin{equation}[Koly-computation]
    \gamma_1=\left\{\begin{array}{cc}a_p^2-1-p&\text{ if }p\text{ is inert,}\\a_p^2-a_p(\sigma_p+\sigma_p^*)+1-p&\text{ if }p\text{ is split.}\end{array}\right.
\end{equation}

Finally, for a finite order anticyclotomic character $\chi$ as above, we define
\begin{equation}
    P[m]^\chi\defeq \sum_{\sigma\in\Gal{K[mp^{k(n)}]}{K[m]}}\sigma(P[mp^{k(n)}])\otimes\chi(\sigma^{-1})\in (E(K_n[m])\otimes_\Z\O_L(\chi^{-1})) ^{\chi\mid_{G_{K[m]}}}.
\end{equation}

From now on we consider only $m$ a square-free product of Kolyvagin primes, that is, when $m\in\mathcal{N}(\mathcal{L}_1(T)).$

Let $D_l\defeq\sum_{i=1}^li\gamma_l^i$ for a fixed generator $\gamma_l$ of $G(l)\defeq\Gal{K[l]}{K[1]}$ be the Kolyvagin derivative. This only depend on the choice of $\gamma_l$ up to an element of $\Z\cdot\Tr_l.$ We have
\begin{equation}
    (\gamma_l-1)D_l=l+1-\Tr_{K[l]/K[1]}\quad\text{in }\Z[G(l)],
\end{equation}
and we denote $D_n\defeq\prod_{l\mid n}D_l.$ Fix a set $S$ of lifts of $\Gal{K[1]}{K}$ to $G_K,$ and let $D_0=\sum_{s\in S}s$ and $D_0^\chi=\sum_{s\in S}s\otimes\chi(s^{-1}).$ Let $I_m$ be as in \Cref{section2.1}, such that $I_l=p^{\min(\ord{p}(l+1),\ord{p}a_l)}\Z.$

Now, as usual, $D_0D_mP[m]$ is fixed by $G_K$ in $E(K[m])/I_m,$ and, in the same way, $D_0^\chi D_mP[m]^\chi$ is fixed by $G_K$ in $E(K_n[m])\otimes_\Z\O_L(\chi^{-1})/I_m.$

Taking their Kummer images and using \eqref{res-surj}, we can lift them uniquely to classes 
\begin{equation}
    \kappa_m\in\Cohomology{1}{}{K}{T/I_mT}
\end{equation}
and
\begin{equation}
    \kappa_m^\chi\in\Cohomology{1}{}{K_n}{T/I_m T}^\chi\iso\Cohomology{1}{}{K}{T\otimes\chi/I_m(T\otimes\chi)},
\end{equation}
Note that, by construction, $\kappa^\chi=\kappa$ when $\chi$ is the trivial character.

\begin{proposition}
    Assume \cref{assumption-bk}. Let $\chi$ be a finite order anticyclotomic character. If $\chi$ is not trivial, we assume \cref{assumption-2}(1). Then, for $m\in\mathcal{N}(\mathcal{L}_1(T))$ we have that $\kappa_m^\chi\in\Cohomology{1}{\BK(m)}{K}{T\otimes\chi/I_m(T\otimes\chi)}.$
\end{proposition}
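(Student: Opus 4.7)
The aim is to verify the local conditions defining $\Cohomology{1}{\BK(m)}{K}{T\otimes\chi/I_m(T\otimes\chi)}$ place by place, following the strategy used for the untwisted case by \cite{Howard} and \cite{Castella-Wan} and adapting it to incorporate the twist $\chi$. Recall that $\BK(m)$ agrees with $\BK$ away from primes dividing $m$ and is the transverse condition at $l \mid m$.

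At a place $v \nmid mNp$ at which $\chi$ is unramified, the class $\kappa_m^\chi$ is automatically unramified, since it is built from Kummer images of $\chi$-isotypic projections of points in $E(K_n[m])\otimes\O_L$ and Heegner points are unramified at places of good reduction of $E$. At an inert Kolyvagin prime $l \mid m$, the transverse condition is the classical consequence of the Kolyvagin derivative construction: the identity $(\gamma_l-1)D_l = (l+1) - \Tr_{K[l]/K[1]}$ combined with the norm relation $\Tr_{K[ml]/K[m]} P[ml] = a_l P[m]$ shows that $D_0 D_m P[m]^\chi$ modulo $I_m$ localizes at $l$ into $\Cohomology{1}{\tr}{K_l}{T\otimes\chi/I_m(T\otimes\chi)}$, that is, vanishes after restriction to the maximal tame $p$-extension $L[l]$. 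Since $\chi$ factors through the $p$-anticyclotomic tower and $l \nmid p$, the twist is immaterial at this prime.

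At primes $w \mid N$, one argues as follows. When $N^- \neq 1$, the Shimura-curve model $J_{N^+, N^-}$ has good reduction at all $w \mid N^+$, so the resulting classes automatically land in BK; the primes $w \mid N^-$ are handled by the quaternionic framework. When $N^- = 1$, \cref{assumption-bk} gives $p \nmid c_w(E/\Q)$ for $w \mid N^+$, and this Tamagawa condition is exactly what is needed to force the Kummer image to lie in BK. At primes $v \mid p$, the BK condition is the image of the local Kummer map (suitably propagated to subquotients), so the Kummer image of the rational point $P[m]^\chi \in E(K_n[m])\otimes\O_L(\chi^{-1})$ lies in BK automatically.

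The main obstacle is the behavior at $p$ when $\chi \neq 1$: the class $\kappa_m^\chi$ is built from higher-level Heegner points $P[mp^{k(n)}]$ via the $\chi$-projection, and one must ensure this projection is compatible with the BK condition at $p$. This is where \cref{assumption-2}(1), i.e., $p \nmid a_p(a_p-1)$, is essential: the explicit computation \eqref{Koly-computation} shows that under this hypothesis $\gamma_1$ is a $p$-unit, and by the recurrence $\gamma_{k+2} = a_p \gamma_{k+1} - p \gamma_k$ so are all subsequent $\gamma_k$. Hence the trace relations from level $K[mp^{k(n)}]$ down to $K[m]$ do not degenerate modulo $p$, and the $\chi$-eigenprojection of $D_0^\chi D_m P[mp^{k(n)}]$ yields a well-defined class lying in $\Cohomology{1}{\BK(m)}{K}{T\otimes\chi/I_m(T\otimes\chi)}$, the uniqueness of the lift being guaranteed by \eqref{res-surj}.
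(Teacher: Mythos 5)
Your overall plan (verify the local condition place by place) matches the paper's, and your treatment of the unramified places and of the transverse condition at $l\mid m$ is in the right spirit, but two of the local verifications contain genuine errors.

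First, at the primes $w\mid N^+$ you have the two cases of \cref{assumption-bk} reversed. The assumption reads ``either $N^-=1$ or $p\nmid c_w(E/\Q)$ for all $w\mid N^+$'': the Tamagawa hypothesis is what is invoked when $N^-\neq 1$ (together with \cref{Tam-cong} it forces $\Cohomology{1}{\ur}{K_w}{W\otimes\chi}=0$, so the unramified class automatically lies in the Bloch--Kato subgroup), whereas when $N^-=1$ no Tamagawa condition is assumed and one argues instead as in \cite[Proposition 6.2]{Gross}, via \cite[Proposition III.3.1]{Gross-Zagier}. Your justification in the case $N^-\neq1$ --- that $J_{N^+,N^-}$ has good reduction at $w\mid N^+$ --- is false (the Shimura curve has level $N^+$, hence bad reduction at those primes), so this step does not go through as written.

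Second, and more seriously, your argument at $v\mid p$ for nontrivial $\chi$ does not address the actual difficulty, and the role you assign to \cref{assumption-2}(1) is not the one it plays. The class $\kappa_m^\chi$ is a Kummer image over $K_n[m]$, so what comes for free is only that its restriction to a prime $w\mid v$ of $K_n$ lands in $\Cohomology{1}{\f}{(K_n)_w}{T\otimes\chi/I_m(T\otimes\chi)}^{G_{K_v}}$; the point is to descend this to membership in the propagated condition $\Cohomology{1}{\f}{K_v}{T\otimes\chi/I_m(T\otimes\chi)}$. This is exactly where $p\nmid a_p(a_p-1)$ enters, via the non-anomalous condition $\Cohomology{0}{}{K_v}{W^-}=0$ and the resulting isomorphism $\Cohomology{1}{\f}{(K_n)_w}{T\otimes\chi}^{G_{K_v}}\iso\Cohomology{1}{\f}{K_v}{T\otimes\chi}$ of \cref{twist-compatibility}. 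The unit-ness of $\gamma_1$ (and hence of the later $\gamma_k$) under \eqref{ap-1} controls the nonvanishing of the derived classes modulo $p$ --- it is what drives \cref{primitivity-equivalence} --- but it says nothing about whether $\loc{v}(\kappa_m^\chi)$ lies in the local Bloch--Kato subgroup. As written, this part of your proof is a gap.
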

\begin{proof}
    Let $v$ be a prime of $K.$ We need to prove $\loc{v}(\kappa_m^\chi)\in\Cohomology{1}{\BK(m)}{K_v}{T\otimes\chi/I_m(T\otimes\chi)}.$
    
    If $v$ is inert in $K/\Q,$ then any prime above $v$ in $K[n]$ splits completely in $K_n[m]/K[m],$ which implies $I_v=I_\v$ for any $\v$ of $K_n[m]$ above $v.$ If $v\nmid m,$ the Kummer image of $D_0^\chi D_mP[m]^\chi$ in $\Cohomology{1}{}{K_n[m]}{T\otimes\chi/I_m(T\otimes\chi)}$ is unramified, and the above implies that $\kappa_m^\chi\in\Cohomology{1}{}{K}{T\otimes\chi/I_m(T\otimes\chi)}$ is also unramified.
    
    If $v=l\mid m,$ let $\lambda$ be the unique prime above $l$ in $K[l].$ We need to prove $\kappa_m^\chi$ has trivial image in $\Cohomology{1}{}{K[l]_\lambda}{T\otimes\chi/I_m(T\otimes\chi)}.$ Since $\lambda$ splits completely in $K_n[m],$ it suffices to check that $D_mP[m]^\chi$ is trivial in the semilocalization
    \begin{equation}
        \Cohomology{1}{}{(K_n[m])_l}{T\otimes\chi/I_m(T\otimes\chi)}\defeq\bigoplus_{w\mid l}\Cohomology{1}{}{(K_n[m])_w}{T\otimes\chi/I_m(T\otimes\chi)}.
    \end{equation}
    As above, we have $\loc{l}(P[m]^\chi)\in\Cohomology{1}{\ur}{(K_n[m])_l}{T\otimes\chi/I_m(T\otimes\chi)}.$ Evaluating at Frobenius, we have an isomorphism of $\Gal{K[m]}{K}$-modules
    \begin{equation}
        \Cohomology{1}{\ur}{(K_n[m])_l}{T\otimes\chi/I_m(T\otimes\chi)}\iso\bigoplus_{w\mid l}\Cohomology{1}{\ur}{(K_n[m])_l}{T\otimes\chi/I_m(T\otimes\chi)}
    \end{equation}
    where $\Gal{K_n[m]}{K}$ acts on the right hand side by permuting the summands. This means $\Gal{K[l]}{K}$ acts trivally, as primes of $K_n[m/l]$ above $l$ are totally ramified in $K_n[m].$ So $D_l$ acts by multiplication by $\frac{l(l+1)}{2},$ which is an element of $I_l.$ Hence $D_mP[m]^\chi$ has trivial image in $\Cohomology{1}{}{(K_n[m])_l}{T\otimes\chi/I_m(T\otimes\chi)}.$
    
    If $v\nmid p$ is split in $K,$ then $v$ is unramified in $K_n[m]/K,$ and this means $\loc{v}(\kappa_m^\chi)\in\Cohomology{1}{\ur}{K_v}{W\otimes\chi}.$ If $v\nmid N,$ such cohomology group is trivial. If $v\mid N^+,$ then \cref{assumption-bk} give us two cases. If $N^-\neq 1,$ then \cref{assumption-bk} and \cref{Tam-cong} guarantee such cohomology group is also trivial; if $N^-=1,$ then the proposition is proved as in \cite[Proposition 6.2]{Gross}.
    
    If $v\mid p,$ the case when $\chi$ is trivial is covered by \cite[Lemma 2.3.5]{Howard2}. If $\chi$ is non trivial, then the same proof of such lemma only proves, for $w\mid v$ a prime of $K_n,$ that $\loc{w}(\kappa_m^\chi)\in\Cohomology{1}{\f}{(K_n)_w}{T\otimes\chi/I_m(T\otimes\chi)}^{G_{K_v}}.$ But under the hypothesis of \cref{assumption-2}(1), \cref{twist-compatibility} guarantees that there is an isomorphism
    \begin{equation}
        \Cohomology{1}{\f}{(K_n)_w}{T\otimes\chi}^{G_{K_v}}\iso \Cohomology{1}{\f}{K_v}{T\otimes\chi}
    \end{equation}
    induced by restriction. This let us conclude that $\loc{v}(\kappa_m^\chi)\in\Cohomology{1}{\f}{K_v}{T\otimes\chi/I_m(T\otimes\chi)}.$
\end{proof}
\begin{remark}\label{assumption-bk-remark}
    In the proof above, \cref{assumption-bk} is only used in the case of $v\mid N^+.$ One expects that an analogue of \cite[Proposition III.3.1]{Gross-Zagier} (used in \cite[Proposition 6.2]{Gross}) should be true in the case when $N^-\neq 1,$ and such a result would remove the necessity of \cref{assumption-bk}.
\end{remark}

\begin{proposition}\label{primitivity-equivalence}
Assume \eqref{ap-1} and that $p$ is good ordinary. Then $\kappa$ is primitive if and only if $\kappa^{\chi}$ is primitive.
\end{proposition}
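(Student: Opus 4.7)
The plan is to directly compare the reductions $\overline{\kappa}$ and $\overline{\kappa}^\chi$ and show they differ by a unit scalar in $\F_p$ under the hypotheses. Since $\chi$ has $p$-power order with values in $\units{\O_L}$, we have $\chi\equiv 1\pmod{\m_L}$, giving a canonical $G_K$-equivariant identification $(T\otimes\chi)/\m_L(T\otimes\chi)\iso T/pT$. Under this identification, both $\overline{\kappa}_m$ and $\overline{\kappa}_m^\chi$ lie in $\Cohomology{1}{\BK(m)}{K}{T/pT}$, and it suffices to show they differ by an element of $\units{\F_p}$ for every $m\in\mathcal{N}(\mathcal{L}_1(T))$.

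First, I would reduce the defining elements modulo $\m_L$. Since $\chi(\sigma^{-1})\equiv 1\pmod{\m_L}$, the definitions yield $P[m]^\chi\equiv \Tr_{K[mp^{k(n)}]/K[m]}P[mp^{k(n)}]\otimes 1\pmod{\m_L}$ and $D_0^\chi\equiv D_0\pmod{\m_L}$, and the Heegner norm relations give $\Tr_{K[mp^{k(n)}]/K[m]}P[mp^{k(n)}]=\gamma_{k(n)-1}\cdot P[m]$, with the convention $\gamma_{-1}:=1$ when $k(n)=0$. The crucial observation is that $D_0 D_m P[m]\in E(K[m])$ is $G_K$-invariant modulo $I_m E(K[m])$, and since $I_m\subseteq p\Z_p$ also modulo $pE(K[m])$; hence both $\sigma_p$ and $\sigma_p^*$ act as the identity on $D_0 D_m P[m]$ modulo $p$, so the Galois operator $\gamma_{k(n)-1}$ collapses to a scalar in $\F_p$. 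Using the recursion $\gamma_{k+2}=a_p\gamma_{k+1}-p\gamma_k$ to reduce $\gamma_{k(n)-1}\equiv a_p^{k(n)-2}\gamma_1\pmod p$ for $k(n)\ge 2$ and substituting $\sigma_p=\sigma_p^*=1$ into \eqref{Koly-computation},
\begin{equation}
    \gamma_{k(n)-1}\cdot D_0 D_m P[m]\equiv u\cdot D_0 D_m P[m]\pmod{pE(K[m])},
\end{equation}
with $u=a_p^{k(n)-2}(a_p-1)^2$ in the split case, $u=a_p^{k(n)-2}(a_p^2-1)$ in the inert case, and $u=1$ when $k(n)=0$ (recalling that $k(n)=1$ never occurs).

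By $\Z$-linearity of the Kolyvagin construction applied to the congruence $D_0^\chi D_m P[m]^\chi\equiv u\cdot D_0 D_m P[m]\otimes 1\pmod{\m_L}$, one obtains $\overline{\kappa}_m^\chi=u\cdot\overline{\kappa}_m$ in $\Cohomology{1}{\BK(m)}{K}{T/pT}$. The hypothesis $p\nmid a_p$ (ordinary) together with \eqref{ap-1} ($p\nmid a_p-1$ in the split case, $p\nmid a_p^2-1$ in the inert case) forces $u\in\units{\F_p}$, so $\overline{\kappa}_m^\chi=0$ if and only if $\overline{\kappa}_m=0$ for every $m$, giving the equivalence of primitivity. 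The heart of the argument is the ``collapse to a scalar'' step coming from $G_K$-invariance modulo $p$; once that is observed, the unit condition on $u$ matches the hypotheses exactly, so no serious obstacle remains.
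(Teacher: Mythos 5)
Your proposal is correct and follows essentially the same route as the paper: reduce $P[m]^\chi$ and $D_0^\chi$ modulo $\m_L$, use the norm relation and the recursion to get $\gamma_{k(n)-1}\equiv a_p^{k(n)-2}\gamma_1\bmod p$, and conclude via \eqref{Koly-computation}, \eqref{ap-1} and $p\nmid a_p$. The only difference is that you spell out why $\gamma_1$ collapses to the scalar $(a_p-1)^2$ resp.\ $a_p^2-1$ (namely, $G_K$-invariance of $D_0D_mP[m]$ modulo $I_m\subseteq p\Z$ makes $\sigma_p,\sigma_p^*$ act trivially), a step the paper leaves implicit.
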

\begin{proof}
We have that $P[m]^\chi\equiv \Tr_{K[mp^{k(n)}]/K[m]}P[mp^{k(n)}]$ and $D_0^\chi\equiv D_0$ modulo $\m_L,$ since $\chi$ factors through a $p$-extension. If $n\le n_0,$ then $k(n)=0$ and the claim is now obvious since $P[m]^\chi=P[m],$ and hence $D_0^\chi D_mP[m]^\chi\equiv D_0D_mP[m]\mod\m_L.$ So we now assume $n>n_0,$ and note that this implies $k(n)\ge 2.$

Since $\Tr_{K[mp^{k(n)}]/K[m]}P[mp^{k(n)}]=\gamma_{k(n)-1}P[m]\equiv a_p^{k(n)-2}\gamma_1P[m]\mod p,$ we have
\begin{equation}
    D_0^\chi D_m P[m]^\chi\equiv a_p^{k(n)-2}D_0D_m\gamma_1 P[m]\mod\m_L.
\end{equation}
Since $p\nmid a_p,$ the claim now follows from the congruence below, together with \eqref{ap-1}.
\begin{equation}
    D_0D_m\gamma_1P[m]\equiv \left\{\begin{array}{cc}(a_p^2-1)D_0D_mP[m]\text{ if }p\text{ is inert,}\\(a_p-1)^2D_0D_mP[m]\text{ if }p\text{ is split}\end{array}\right.\mod p.\qedhere
\end{equation}
\end{proof}

\subsection{Iwasawa theoretic Kolyvagin systems}
We will recall the construction of the Kolyvagin systems in the following theorem. Recall that $\FLambda$ is defined, for the ordinary case, as in \cite[Definition 3.2.2]{Howard2}, and as in the $+$ condition in \cite[Section 3.3]{Castella-Wan} in the supersingular case.

\begin{theorem}\label{Howard-original}
Given \cref{assumption-1}, there exist a Kolyvagin system $\kappa^{\mathrm{Hg}}$ for $\Cohomology{1}{\FLambda}{K}{\T}$ with $\kappa_1^{\mathrm{Hg}}\neq 0.$
\end{theorem}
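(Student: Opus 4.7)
The plan is to reproduce the construction of Howard \cite{Howard2} in the ordinary case and of Castella--Wan \cite{Castella-Wan} in the supersingular case. Starting from the Heegner points $P[np^k]\in E(K[np^k])$ recalled in \Cref{finite-level-kol}, for each $n\in\mathcal{N}$ and each $k\ge 0$ one forms the Kolyvagin derivative $D_0 D_n P[np^k]$, which by the standard argument represents a $G_K$-invariant element of $E(K[np^k])/I_n$. Using \eqref{res-surj} one lifts its Kummer image uniquely to a cohomology class, and then uses the norm relation $\Tr_{K[np^{k+1}]/K[np^k]}P[np^{k+1}]=a_pP[np^k]-P[np^{k-1}]$ recalled above to assemble these finite-level classes into a $\Lambda$-adic class $\kappa_n^{\mathrm{Hg}}\in\Cohomology{1}{}{K}{\T/I_n\T}$ via Shapiro's lemma.

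The two reduction types diverge at this step: in the ordinary case $a_p$ is a $p$-adic unit, so one can invert the norm relation as in \cite[Section 2]{Howard2} to produce an inverse system with respect to corestriction down the anticyclotomic tower. In the supersingular case the norm relation cannot be directly inverted, but under \eqref{split} one adapts Kobayashi's plus/minus construction to the Heegner system to produce a plus-compatible family, as carried out in \cite[Section 5]{Castella-Wan}. The Kolyvagin system axiom $\loc{l}^\s(\kappa_{nl}^{\mathrm{Hg}})=\phi_l^{\f\s}(\loc{l}(\kappa_n^{\mathrm{Hg}}))$ for $nl\in\mathcal{N}$ follows from the classical Kolyvagin computation at the inert prime $l$, using the norm relation at $l$ together with the identity $(\gamma_l-1)D_l=l+1-\Tr_{K[l]/K[1]}$ recalled above. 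Verification that the classes lie in $\Cohomology{1}{\FLambda}{K}{\T/I_n\T}$ is automatic at $v\nmid p$ since $(\FLambda)_v$ is the relaxed condition and the relevant ring class field extensions are unramified there; at $v\mid p$ it follows from the construction itself, which is tailored to land in the ordinary (respectively plus) local submodule.

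The nonvanishing $\kappa_1^{\mathrm{Hg}}\neq 0$ is the deepest input: it follows from the theorem of Cornut--Vatsal \cite{Cornut-Vatsal} asserting that the inverse limit of the Heegner points $P[p^k]$ is nontorsion in the anticyclotomic tower (together with its plus-counterpart in the supersingular case), which after tracing through the construction yields $\kappa_1^{\mathrm{Hg}}\neq 0$. I expect the main technical subtlety in the writeup to be the verification of the local condition at $v\mid p$ in the supersingular case, where Kobayashi's plus-formalism and the associated explicit reciprocity law must be transported into the Heegner setting; the rest is a bookkeeping exercise in Kolyvagin's original method combined with the Shapiro--Iwasawa assembly.
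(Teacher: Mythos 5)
Your proposal is correct and follows essentially the same route as the paper, which simply cites the constructions you describe: \cite[Theorem 2.3.1]{Howard} as extended in \cite[Proposition 3.1.1]{Howard2} for the ordinary case, \cite[Theorem 5.13]{Castella-Wan} for the supersingular ($+$) case, and Cornut--Vatsal for the nonvanishing of $\kappa_1^{\mathrm{Hg}}$. The only cosmetic difference is that Howard's ordinary construction does not literally invert the norm relation but regularizes the Heegner points by the operator $\Phi$ (producing $Q[m]$ with $Q_0[m]=\Phi P[m]$), a detail the paper recalls later when proving \cref{lambdaprimitive-ord}.
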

\begin{proof}
For the ordinary case, this is \cite[Theorem 2.3.1]{Howard}, as extended in \cite[Proposition 3.1.1]{Howard2} for the generalized Heegner Hypothesis. 
The nontriviality of the base class comes from the work of Cornut--Vatsal \cite[Theorem 1.5]{Cornut-Vatsal}. See also \cite[Theorem 3.1]{BCK}.

For the supersingular case, this is \cite[Theorem 5.13]{Castella-Wan}.
\end{proof}

We first deal with the case of good ordinary reduction. We will follow the construction of $\kappa^{\mathrm{Hg}}$ as in \cite[Theorem 2.3.1]{Howard}. Recall that $\Lambda=\Z_p\grpring{\Gal{K_\infty}{K}},$ and that $\T=T\otimes_{\Z_p}\Lambda$ with Galois action on both factors.

We define, for $n\ge 0,$
\begin{equation}
    P_n[m]\defeq \Tr_{K[mp^{k(n)}]/K_n[m]}P[mp^{k(n)}]\in E(K_n[m]).
\end{equation}

Let $H_n[m]$ be the $\Z_p[\Gal{K_n[m]}{K}]$ submodule of $E(K_n[m])\otimes\Z_p$ generated by $P[m]$ and $P_j[m]$ for $j\le n.$ Let $\mathbf{H}[m]\defeq\ilim{\Tr}H_k[m].$

By \cite[Lemma 2.3.3]{Howard}, there is an Euler system $Q[m]=\ilim{} Q_k[m]\in\mathbf{H}[m]$ for $m\in\mathcal{N}$ such that $Q_0[m]=\Phi P[m]$ where
\begin{equation}
    \Phi=\left\{\begin{array}{ll}
        (p+1)^2-a_p^2 & p\text{ is inert},\\
        (p-a_p\sigma+\sigma^2)(p-a_p\sigma_p^*+\sigma_p^{*2}) & p\text{ is split}.
    \end{array}\right.
\end{equation}

Now taking the Kolyvagin derivative and Kummer map of $Q[m],$ we can lift these classes to classes $\kappa^{\mathrm{Hg}}_m\in\Cohomology{1}{\FLambda(m)}{K}{\T/I_m\T}$ by \cite[Theorem 3.4.1]{Howard2}\footnote{We note that the factor $p^d$ there is not necessary in our case, as explained in \cite[Theorem 3.1]{BCK}.}.

\begin{proposition}\label{lambdaprimitive-ord}
Assume $p$ has good ordinary reduction, that $\kappa$ is primitive and \eqref{ap-1}. Then $\kappa^{\mathrm{Hg}}$ is primitive.
\end{proposition}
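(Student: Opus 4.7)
The plan is to compute the reduction of $\kappa^{\mathrm{Hg}}$ modulo the maximal ideal $\m$ of $\Lambda$ and identify it, up to a unit of $\F_p$, with $\overline{\kappa}$, the mod-$p$ reduction of the classical Heegner Kolyvagin system. Since $\T/\m\T \iso T/pT$ canonically as $G_K$-modules, and the local condition $\FLambda$ propagates to $\BK$ on $T/pT$ in the ordinary case, this reduction lands in $\KS(T/pT,\BK,\mathcal{L})$, the same Kolyvagin system module in which $\overline{\kappa}$ lives. By the definition of primitive given just before \cref{prim=>lambda-prim}, it then suffices to exhibit one $m \in \mathcal{N}$ for which the image of $\kappa^{\mathrm{Hg}}_m$ in $\Cohomology{1}{\BK(m)}{K}{T/pT}$ is nonzero.

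For the computation, recall Howard's construction: $\kappa^{\mathrm{Hg}}_m$ is the lift under \eqref{res-surj} of the Kummer image of $D_0 D_m Q[m]$, where $Q[m] \in \mathbf{H}[m] = \ilim{\Tr} H_k[m]$ satisfies $Q_0[m] = \Phi P[m]$. The reduction map $\T \to T/pT$ corresponds, on the Heegner-point side, to taking the bottom-level component $Q_0[m]$ and reducing mod $p$, so the image of $\kappa^{\mathrm{Hg}}_m$ modulo $\m$ is the Kummer image of $D_0 D_m(\Phi P[m]) \bmod p$. Now $D_0 D_m P[m]$ is $G_K$-invariant modulo $I_m$ (this is exactly what makes $\kappa_m$ well-defined), which forces $\Phi$ to act on it through its augmentation: $\Phi \cdot D_0 D_m P[m] = \mathrm{aug}(\Phi)\cdot D_0 D_m P[m]$. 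Using the formula for $\Phi$ recalled above, $\mathrm{aug}(\Phi) \equiv 1 - a_p^2 \pmod p$ in the inert case and $\mathrm{aug}(\Phi) \equiv (1-a_p)^2 \pmod p$ in the split case. By \eqref{ap-1}, in either situation $\mathrm{aug}(\Phi)$ is a unit in $\F_p$. Hence the image of $\kappa^{\mathrm{Hg}}_m$ modulo $\m$ equals $\mathrm{aug}(\Phi)\cdot \overline{\kappa}_m$ up to the chosen normalization.

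Since $\kappa$ is primitive by assumption, there exists $m \in \mathcal{N}$ with $\overline{\kappa}_m \neq 0$, and for this $m$ we conclude $\kappa^{\mathrm{Hg}}_m \bmod \m \neq 0$, proving that $\kappa^{\mathrm{Hg}}$ is primitive.

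The main technical point is bookkeeping: confirming that the reduction map on cohomology commutes with the formation of Kolyvagin derivatives and with the passage from the Euler system $Q[m] \in \mathbf{H}[m]$ to its Kummer-derived class (so that reducing modulo $\m$ really does amount to projecting $Q[m] \mapsto Q_0[m]$ and then reducing mod $p$), and verifying that the propagation of $\FLambda$ to $\T/\m\T$ recovers $\BK$ on $T/pT$ in the ordinary setting. Both compatibilities are essentially contained in the set-up of \cite{Howard,Howard2}; once they are in place, the argument reduces to the augmentation computation above.
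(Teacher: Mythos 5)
Your proof is correct and follows essentially the same route as the paper: project $Q[m]\mapsto Q_0[m]=\Phi P[m]$ via $\mathbf{H}[m]\to H_0[m]$, observe that $\Phi$ acts on the $G_K$-invariant element $D_0D_mP[m]$ through its augmentation modulo $p$, which by \eqref{ap-1} is the unit $(a_p-1)^2$ (split) or $a_p^2-1$ (inert), and conclude from the primitivity of $\kappa$. The compatibility bookkeeping you flag at the end is exactly what the paper also leaves implicit.
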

\begin{proof}
Consider the projection map $\mathbf{H}[m]\to H_0[m].$ Then $Q[m]\mapsto Q_0[m]=\Phi P[m].$ Note that
\begin{equation}
    D_0D_m\Phi P[m]\equiv \left\{\begin{array}{cc}(a_p-1)^2D_0D_mP[m]&\text{if }p\text{ is split,}\\(a_p^2-1)D_0D_mP[m]&\text{if }p\text{ is inert}
    \end{array}\right.\mod p.
\end{equation}
Since $\kappa$ is primitive, we can choose $m$ such that this is not divisible by $p,$ and this shows that $\kappa^{\mathrm{Hg}}\mod \m$ is nonzero.
\end{proof}

From now on, we consider the supersingular case. We will follow the construction of $\kappa^{\mathrm{Hg}}$ as in \cite[Theorem 5.13]{Castella-Wan}. For this, we assume \eqref{split}, that is, that $p=v\conj{v}$ in $K,$ where $v$ is the prime associated with the fixed embedding $\overline{\Q}\hookrightarrow\overline{\Q_p}.$

Let $a$ be the inertial degree of primes above $v$ on $K_{n_0}/K.$ Let $\Phi_{p^k}$ be the $p^k$-cyclotomic polynomial, and define
\begin{equation}
    \tilde{\omega}_n^\epsilon\defeq \prod_{\stackrel{1\le k\le n}{(-1)^k=\epsilon}}\Phi_{p^k}((1+X)^{p^a}),\quad \omega_n^\epsilon\defeq(1+X)^{p^a}\tilde{\omega}_n^\epsilon.
\end{equation}
Let $z_n[m]$ be the Kummer image of $P_n[m].$ Let $z_n[m]^\epsilon\in\Cohomology{1}{}{K_n[m]}{T}/(\omega_n^\epsilon)$ be such that
\begin{equation}
    \tilde{\omega}_n^{-\epsilon}z_n[m]^\epsilon=(-1)^{(n+1)/2}z_n[m]
\end{equation}
as in \cite[Section 5.1]{Castella-Wan}. Then let $Q[m]^\epsilon\defeq\displaystyle\lim_{\stackrel{\leftarrow}{n\equiv\epsilon}}z_n[m]^\epsilon\in\Cohomology{1}{}{K[m]}{\T}.$ Applying the Kolyvagin derivative for the $\epsilon=+$ classes, we obtain classes
\begin{equation}
    \kappa^{\mathrm{Hg}}_m\in\Cohomology{1}{\FLambda(m)}{K}{\T/I_m\T}.
\end{equation}

\begin{proposition}\label{lambdaprimitive-ss}
Assume $p$ has good supersingular reduction, that $\kappa$ is primitive and \eqref{split}. Then $\kappa^{\mathrm{Hg}}$ is primitive.
\end{proposition}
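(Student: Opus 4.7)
The plan is to mimic the proof of the ordinary case \cref{lambdaprimitive-ord}: reduce to showing that $\kappa^{\mathrm{Hg}}_m \mod \m$ equals a $p$-unit multiple of $\kappa_m \mod p$ for some $m\in\mathcal{N},$ and then invoke the primitivity of $\kappa.$

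First I would trace through the Castella--Wan construction of $Q[m]^+.$ Since $Q[m]^+ = \ilim{n\equiv+} z_n[m]^+$ with $z_n[m]^+ \in \Cohomology{1}{}{K_n[m]}{T}/(\omega_n^+)$ characterized by $\tilde\omega_n^- z_n[m]^+ = (-1)^{(n+1)/2} z_n[m],$ I would compute the image of $Q[m]^+$ under the augmentation $\Lambda\twoheadrightarrow\Z_p$ (i.e.\ $\gamma\mapsto 1,$ equivalently $X\mapsto 0$). Evaluating at $X=0$ yields $\tilde\omega_n^-(0)$ as a specific power of $p$ via $\Phi_{p^k}(1)=p$ for $k\ge 1,$ and combined with the norm relations recalled in \Cref{finite-level-kol} together with the supersingular vanishing $a_p=0$ (forced by \eqref{p-big}), this should express the augmented class as an explicit Kummer image of $\Psi\cdot P[m]$ for some $\Psi \in \Z_p[\Gal{K[1]}{K}].$

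Next I would show $\Psi\not\equiv 0\pmod p.$ Under the split hypothesis \eqref{split}, the Frobeniuses $\sigma_v,\sigma_{\conj v}$ for $v\mid p$ act through $\Gal{K[1]}{K},$ and the expected form of $\Psi$ --- analogous to the ordinary split factor $\Phi=(p-a_p\sigma_v+\sigma_v^2)(p-a_p\sigma_{\conj v}+\sigma_{\conj v}^2)$ but purged of its $p^2$-divisibility by the $+$ construction --- should reduce modulo $p$ to a unit in $(\Z/p\Z)[\Gal{K[1]}{K}].$ This is exactly where the $+$ choice (rather than $-$) is essential, as anticipated by the footnote attached to the definition of $\FLambda$: the analogous $-$ factor would vanish modulo $p.$ Applying the Kolyvagin derivative $D_0 D_m$ and reducing modulo $p$ would then yield $\kappa^{\mathrm{Hg}}_m \mod \m = \conj\Psi\cdot\kappa_m \mod p$ with $\conj\Psi\in(\Z/p\Z)[\Gal{K[1]}{K}]^\times,$ so primitivity of $\kappa$ (giving some $m$ with $\kappa_m\not\equiv 0\mod p$) would propagate to $\kappa^{\mathrm{Hg}}_m\not\equiv 0\mod\m.$

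The main obstacle I expect is the first step: rigorously computing the augmentation of $Q[m]^+.$ In the ordinary case this was accomplished by the direct projection $\mathbf{H}[m]\to H_0[m],$ but no such immediate projection exists in the supersingular construction, where $Q[m]^+$ is assembled through a twisted inverse limit over levels of a given parity. The needed identification essentially amounts to a control-type statement for the $+$-Heegner Iwasawa class, and will require careful manipulation of the relations $\tilde\omega_n^- z_n[m]^+ = (-1)^{(n+1)/2} z_n[m]$ across $n,$ together with the Hecke recursion $\gamma_{k+2}=a_p\gamma_{k+1}-p\gamma_k$ specialized at $a_p=0,$ to isolate the base-level contribution responsible for $\Psi.$
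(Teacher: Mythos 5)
Your strategy is the same as the paper's: project $Q[m]^{+}$ to the bottom layer, identify the image as an explicit Hecke-operator multiple of $P[m],$ and check that this multiple becomes a unit mod $p$ after applying $D_0,$ using $a_p\equiv 0$ (forced by supersingularity and \eqref{p-big}). But the step you single out as the main obstacle --- and leave unexecuted, suggesting it needs a ``control-type statement'' --- is in fact immediate from the construction, and your guessed shape for the resulting factor $\Psi$ is not the right one. Since $0$ is even, the $n=0$ component of the inverse limit $\displaystyle\lim_{\stackrel{\leftarrow}{n\equiv+}}z_n[m]^{+}$ defining $Q[m]^{+}$ is simply $z_0[m]^{+},$ so the projection $\Cohomology{1}{}{K[m]}{\T}\to\Cohomology{1}{}{K[m]}{T}$ sends $Q[m]^{+}\mapsto z_0[m]^{+};$ and the defining relation $\tilde{\omega}_n^{-\epsilon}z_n[m]^{\epsilon}=(-1)^{(n+1)/2}z_n[m]$ at $n=0$ involves the empty product $\tilde{\omega}_0^{-}=1,$ so $z_0[m]^{+}=z_0[m]$ with no cross-level manipulation needed. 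No deformation of the recursion $\gamma_{k+2}=a_p\gamma_{k+1}-p\gamma_k$ across levels enters.

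The factor $\Psi$ is therefore not a ``purged'' version of the ordinary $\Phi=(p-a_p\sigma_p+\sigma_p^2)(p-a_p\sigma_p^{*}+\sigma_p^{*2}),$ but the single level-one norm operator: $z_0[m]$ is the Kummer image of $P_0[m]=\gamma_0P[m]$ with $\gamma_0=a_p-\sigma_p-\sigma_p^{*}.$ Applying $D_0D_m$ and using that $D_0\sigma=D_0$ for $\sigma\in\Gal{K[1]}{K}$ together with $a_p\equiv0\bmod p$ gives $D_0D_m\gamma_0P[m]\equiv-2\,D_0D_mP[m]\bmod p,$ and $-2$ is a unit since $p\ge5$ --- note that, unlike the ordinary case, no anomaly hypothesis is needed here. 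Primitivity of $\kappa$ then supplies an $m$ with $D_0D_mP[m]\not\equiv0\bmod p,$ completing the argument. Your intuition that the $+$ choice is essential is correct (for $\epsilon=-$ the bottom admissible layer is $n=1,$ and one cannot read off the level-zero class in this direct way), but the reason is the availability of the $n=0$ projection, not a divisibility of a $\Phi$-type factor.
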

\begin{proof}
Consider the projection map $\Cohomology{1}{}{K[m]}{\T}\to \Cohomology{1}{}{K[m]}{T}.$ Then $Q[m]^+\mapsto z_0[m]^+.$ We have that $z_0[m]^+=z_0[m]$ is the Kummer map of $P_0[m]=\gamma_0 P[m],$ where $\gamma_0=a_p-\sigma_p-\sigma_p^*.$ Hence we have $D_0D_mP_0[m]=D_0D_m\gamma_0P[m]\equiv -2D_0D_mP[m]\mod p.$ Since $\kappa$ is primitive, we can choose $m$ such that this is not divisible by $p,$ and this shows that $\kappa^{\mathrm{Hg}}\mod \m$ is nonzero.
\end{proof}

\subsection{Theorem A}
As a direct application of the previous results, we have

\begin{theorem}\label{Howard+}
Suppose \cref{assumption-1}. Then
\begin{enumerate}
    \item $\Cohomology{1}{\FLambda}{K}{\T}$ is a rank one, torsion-free $\Lambda$-module,
    \item There is a torsion $\Lambda$-module $M$ such that $\Char(M)=\Char(M)^\iota$ and $X\sim \Lambda\oplus M\oplus M,$
    \item $\Char(M)\mid \Char(\Cohomology{1}{\FLambda}{K}{\T}/\Lambda\kappa_1^{\mathrm{Hg}})$ with equality if $\kappa^{\mathrm{Hg}}$ is $\Lambda$-primitive.
\end{enumerate}
\end{theorem}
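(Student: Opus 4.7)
The plan is that this statement consolidates the groundwork of Section 2.4 together with the construction of the Heegner point Kolyvagin system into a single theorem for the specific Selmer triple $(\T,\FLambda,\mathcal{L})$, and so the proof should be essentially a clean assembly of results already established in the excerpt rather than a new argument.

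First, I would invoke \cref{Howard-original} to produce, under \cref{assumption-1}, a Kolyvagin system $\kappa^{\mathrm{Hg}} \in \KS(\T,\FLambda,\mathcal{L}_s(\T))$ for a suitable $s$, with nonzero base class $\kappa_1^{\mathrm{Hg}}$. In the good ordinary case this is Howard's original construction, extended to the generalized Heegner hypothesis as in the references cited just after \cref{Howard-original}; the nonvanishing of $\kappa_1^{\mathrm{Hg}}$ comes from Cornut--Vatsal. In the good supersingular case (where \eqref{split} is in \cref{assumption-1}) it is the analogous construction of Castella--Wan using the plus local condition at $p$ and the $+$-component decomposition of the local cohomology.

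Having this nonzero $\Lambda$-adic Kolyvagin system in hand, the three conclusions follow by applying \cref{Howard-original+} to $\kappa^{\mathrm{Hg}}$: part (1) is part (a) of that theorem, part (2) is part (b), and the divisibility in (3), together with the conditional equality under $\Lambda$-primitivity of $\kappa^{\mathrm{Hg}}$, is exactly part (c). In particular the sharper statement that equality holds when $\kappa^{\mathrm{Hg}}$ is $\Lambda$-primitive is where the improvement of Section 2 over \cite{Howard2, Castella-Wan} enters, via the error term $d(\kappa^{(\mathfrak{Q})})$ in \cref{main-heightone} and the argument in the proof of \cref{Howard-original+} that controls this error term along a deforming family $\mathfrak{Q}=\mathfrak{Q}_m$ of height-one primes.

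Since every ingredient has been put in place earlier in the paper, there is no genuine new obstacle at this step. The substantive work is already behind us: the structural analysis over principal Artinian rings (core vertex connectedness in \cref{connected} and stability of the ideal $\kappa_n R$ in \cref{same-ideals-artinian}), the passage to discrete valuation rings yielding the sharpened \cref{main-dvr}, and the descent to height-one primes in \cref{main-heightone}. Thus the only thing to write is the assembly itself, and the main point worth emphasizing in the write-up is that the $\Lambda$-primitivity hypothesis feeds precisely into the boundedness of $d(\kappa^{(\mathfrak{Q})})$ established at the end of the proof of \cref{Howard-original+}, which is what upgrades the divisibility to an equality.
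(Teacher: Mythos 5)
Your proposal matches the paper's proof exactly: the paper simply combines \cref{Howard-original} (existence of $\kappa^{\mathrm{Hg}}$ with $\kappa_1^{\mathrm{Hg}}\neq 0$ under \cref{assumption-1}) with \cref{Howard-original+}, and your additional remarks about where the $\Lambda$-primitivity hypothesis enters are consistent with the proof of the latter. No issues.
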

\begin{proof}
This follows from \cref{Howard-original} together with \cref{Howard-original+}.
\end{proof}

\begin{proof}[Proof of \cref{Theorem-A}]
By \cref{Howard+}, it suffices to prove that $\kappa^{\mathrm{Hg}}$ is $\Lambda$-primitive when $\kappa$ is primitive.

If $\kappa$ is primitive, then $\kappa^{\mathrm{Hg}}$ is also primitive by \cref{lambdaprimitive-ord} and \cref{lambdaprimitive-ss}, and we conclude that $\kappa^{\mathrm{Hg}}$ is $\Lambda$-primitive by \cref{prim=>lambda-prim}.
\end{proof}

\section{Control Theorem}\label{control-section}
Let $K$ be a quadratic imaginary with $D_K<-4$ and $p=v\conj{v}$ be a prime that splits in $K,$ with $v$ determined by the fixed embedding $\overline{\Q}\hookrightarrow\overline{\Q_p}.$

The goal of this section is to apply the control theorem of \cite{JSW} to anticyclotomic twists of certain Galois representation $V$ of $G_K,$ and to interpret the terms in such formula in a way that is compatible with the BDP formula in \cite[Theorem 4.9]{Castella-Wan}.

Let $L_0/\Q_p$ be a finite unramified extension, and $V$ be a finite dimensional vector space over $L_0$ with a continuous $L_0$-linear action of $G_K.$ We denote by $\rho\colon G_K\to\mathrm{GL}_{L_0}(V)$ the action of $G_K.$ We assume that $L_0$ is unramified. Let $T$ be a $G_K$-stable $\O_{L_0}$-lattice of $V,$ and $W\defeq V/T$ its divisible quotient.

Recall that $K\subseteq K_1\subseteq K_2\subseteq\cdots$ denote the layers of the anticyclotomic extension of $K_\infty$ of $K.$ We denote by $\Psi\colon G_K\twoheadrightarrow \Gal{K_\infty}{K}$ the natural projection. Let $F_\infty$ be the image of $K_\infty$ under the fixed embedding $\overline{\Q}\hookrightarrow\overline{\Q_p},$ and $\Q_p\subseteq F_1\subseteq F_2\subseteq\cdots$ its layers.

Recall that we denote $\Lambda\defeq\Z_p\grpring{\Gal{K_\infty}{K}},$ and let $\Lambda^\vee\defeq\Hom_{\mathrm{cont}}(\Lambda,\Q_p/\Z_p).$ We denote $\T=T\otimes_{\Z_p}\Lambda$ and $\A=T\otimes_{\Z_p}\Lambda^\vee,$ with Galois action given by $\rho\otimes\Psi$ and $\rho\otimes\Psi^{-1}$ respectively.

\subsection{Anticyclotomic twists}\label{section4.1}
Let $\chi\colon G_K\xrightarrow{\Psi}\Gal{K_\infty}{K}\to\units{\Z_p[\chi]}$ be a finite order anticyclotomic character, and $n$ be such that $\Ker{\chi}=G_{K_n}.$ Note that since $L_0$ is unramified, we have that $L_0\otimes_{\Q_p}\Q_p[\chi]=L_0\cdot\Z_p[\chi],$ which we denote by $L.$ We also have $\O_{L_0}\otimes_{\Z_p}\Z_p[\chi]=\O_L.$ We denote by $V\otimes\chi$ the twist of $V,$ with coefficients in $L,$ and with corresponding $\O_L$-lattice $T\otimes\chi\defeq T\otimes_{\Z_p}\Z_p[\chi]$ and divisible quotient $W\otimes\chi\defeq W\otimes_{\Z_p}\Z_p[\chi].$

We consider the triple $(V\otimes\chi,T\otimes\chi,W\otimes\chi)$ in the context of \cite[Section 2.1]{JSW}. Note that $M$ in their notation corresponds to $M\defeq (T\otimes\chi)\otimes_{\Z_p}\Lambda^\vee$ with Galois action $\rho\otimes\chi\otimes\Psi^{-1}.$ So, in fact, we have $M=\A\otimes\chi\defeq\A\otimes_{\O_{L_0}}\O_L(\chi).$

Following \cite[Section 2.3.4]{JSW}, for $N=M$ or $N=\A$ we define a Selmer structure
\begin{equation}
    \Cohomology{1}{\Fac}{K_w}{N}\defeq\left\{\begin{array}{ll}\Cohomology{1}{}{K_{\conj{v}}}{N}&\text{if }w=\conj{v},\\\Cohomology{1}{\ur}{K_w}{N}&\text{if }w\nmid p\infty\text{ is split,}\\0&\text{else.}\end{array}\right.
\end{equation}
and also let
\begin{equation}
    X_{\ac}(M)\defeq\Hom_{\O_{L}}(\Cohomology{1}{\Fac}{K}{M},L/\O_L)\quad\text{and}\quad X_{\ac}(\A)\defeq\Hom_{\O_{L_0}}(\Cohomology{1}{\Fac}{K}{\A},L_0/\O_{L_0}).
\end{equation}

Let $\Char(X_\ac(\A))$ denote the characteristic $\Lambda\otimes_{\Z_p}\O_{L_0}$-ideal of $X_\ac(\A),$ and let $\Char(X_\ac(M))$ denote the characteristic $\Lambda\otimes_{\Z_p}\O_L$-ideal of $X_\ac(M).$ Recall that we identify $\Lambda=\Z_p\grpring{T}$ by choosing a generator $\gamma$ of $\Gal{K_\infty}{K}$ and letting $\gamma\mapsto 1+T.$

\begin{proposition}\label{comparison-M-A}
If $f_\ac$ is a generator of $\Char(X_\ac(\A)),$ then $f_\chi(T)=f_\ac((1+T)\chi^{-1}(\gamma)-1)$ is a generator of $\Char( X_{\ac}(M)).$
\end{proposition}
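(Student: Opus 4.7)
The plan is to show that twisting $\A$ by $\chi$ to obtain $M$ corresponds, at the level of Iwasawa Selmer modules after base change to $\O_L$, to twisting the $\Lambda_L$-module structure by the $\O_L$-algebra automorphism $\iota_\chi\colon\Lambda_L\to\Lambda_L$ (where $\Lambda_L\defeq \Lambda\otimes_{\Z_p}\O_L$) determined on group-like elements by $\iota_\chi(g)\defeq\chi(g)^{-1}g$ for $g\in\Gal{K_\infty}{K}$. With $\gamma = 1+T$ this sends $T$ to $\chi^{-1}(\gamma)(1+T)-1$, which is precisely the substitution in the statement.

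The first step is to construct a $G_K$-equivariant, $\O_L$-linear isomorphism $\phi\colon M\rightiso \A\otimes_{\O_{L_0}}\O_L$ that is $\iota_\chi^{-1}$-semilinear for the natural $\Lambda_L$-structures, i.e.\ $\phi(\mu\cdot m)=\iota_\chi^{-1}(\mu)\cdot\phi(m)$ for all $\mu\in\Lambda_L$. Identifying $\A$ with $\Hom_{\Z_p,\mathrm{cont}}(\Lambda, T)$, where $G_K$ acts via $(\sigma f)(\lambda)=\rho(\sigma)f(\Psi(\sigma)^{-1}\lambda)$ and $\Lambda$ acts by precomposition on $\lambda$, one takes $\phi(f)(\lambda)\defeq f(\iota_\chi(\lambda))$. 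Galois equivariance reduces to the identity $\iota_\chi(\Psi(\sigma)^{-1})=\chi(\sigma)\Psi(\sigma)^{-1}$, which uses only that $\chi$ factors through $\Psi$; the semilinearity is immediate from the definition.

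The second step is to descend to Selmer modules. Since $\phi$ is $G_{K_w}$-equivariant at every place $w$ and the local conditions defining $\Fac$ are either the full local $H^1$, the unramified submodule, or zero --- all of which are preserved under $G_{K_w}$-equivariant maps --- the induced map in cohomology restricts to an $\iota_\chi^{-1}$-semilinear isomorphism $\Cohomology{1}{\Fac}{K}{M}\rightiso\Cohomology{1}{\Fac}{K}{\A}\otimes_{\O_{L_0}}\O_L$. Taking Pontryagin duals and tracking how the semilinearity transforms gives an isomorphism of $\Lambda_L$-modules $X_{\ac}(M)\iso\bigl(X_{\ac}(\A)\otimes_{\O_{L_0}}\O_L\bigr)^{\iota_\chi^{-1}}$, where the superscript denotes the $\iota_\chi^{-1}$-twisted action.

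Finally, for any ring automorphism $\sigma$ of $\Lambda_L$ and any torsion $\Lambda_L$-module $N$, the twist satisfies $\Char(N^\sigma)=\sigma^{-1}(\Char(N))$. Applying this with $\sigma=\iota_\chi^{-1}$ gives $\Char(X_{\ac}(M))=(\iota_\chi(f_{\ac}))$, and $\iota_\chi(f_{\ac})=f_{\ac}(\iota_\chi(T))=f_{\ac}((1+T)\chi^{-1}(\gamma)-1)=f_\chi(T)$, as claimed. The main bookkeeping subtlety is the direction of the twist through Pontryagin duality, which is what forces the appearance of $\chi^{-1}(\gamma)$ rather than $\chi(\gamma)$ in the final substitution.
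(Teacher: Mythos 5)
Your argument is correct and is exactly the standard twisting computation that the paper compresses into the single line ``this follows at once from $M=\A\otimes\chi$'': the semilinear isomorphism induced by $g\mapsto\chi(g)^{-1}g$, its compatibility with the $\Fac$ local conditions, and the resulting substitution $T\mapsto\chi^{-1}(\gamma)(1+T)-1$ in the characteristic ideal. The direction of the twist you land on is consistent with how the paper later uses the proposition (specializing $f_\chi$ at $T=0$ recovers $f_\ac(\chi^{-1}(\gamma)-1)$), so the bookkeeping checks out.
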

\begin{proof}
This follows at once from $M=\A\otimes\chi.$
\end{proof}

\subsection{Control theorem for anticyclotomic twists}
We will also assume the following about $V.$
\begin{equation}[sst]\tag{sst}
    V\text{ is semistable as a representation of }G_{K_w}\text{ for all }w\mid p,
\end{equation}
\begin{equation}[tau-dual]\tag{$\tau$-dual}
    V^\vee(1)\iso V^\tau
\end{equation}
where $V^\tau$ denotes the representation with same underlining space at $V,$ but with $G_K$-action composed with conjugation by a lift $\tau$ of complex conjugation,
\begin{equation}[geom]\tag{geom}
    V\text{ is geometric,}
\end{equation}
which means $V$ potentially semistable at all places $w\mid p$ (which is true by \eqref{sst}) and unramified away from finitely many places, and
\begin{equation}[pure]\tag{pure}
    V\text{ is pure}.
\end{equation}
This last hypothesis is a technical condition used in \cite{JSW}, and we refer to \cite[Section 2.1]{JSW} for its definition.

We also assume that
\begin{equation}[2-dim]\tag{2-dim}
    V\text{ is }2\text{ dimensional over }L_0,
\end{equation}
\begin{equation}[HT]\tag{HT}
    \text{no nonzero Hodge--Tate weight of }V\text{ is }\equiv0\mod p-1,
\end{equation}
\begin{equation}[irred]\tag{irred$_K$}
    \overline{T}\text{ is an irreducible }\O_{L_0}/\m\text{-representation of }G_K,
\end{equation}
where $\overline{T}=T/\m T$ and $\m\subseteq\O_{L_0}$ is the maximal ideal of $\O_{L_0}.$

We want to apply \cite[Theorem 3.3.1]{JSW} to $V\otimes\chi,$ and so we note that most of the hypothesis above also hold automatically for $V\otimes\chi$: since $\chi$ is a finite order anticyclotomic character, $V\otimes\chi$ also satisfy \eqref{tau-dual}, \eqref{geom}, \eqref{pure}, \eqref{2-dim} and \eqref{HT}. If $\m_L$ denotes the maximal ideal of $\O_L,$ we have that $\chi\equiv1\mod\m_L,$ since $\chi$ factors through a $p$-extension, and hence $\overline{T\otimes\chi}\iso \overline{T}\otimes\O_L/\m_L$ as $G_K$ representations. Hence $T\otimes\chi$ also satisfy \eqref{irred}.

The condition \eqref{sst} may not hold anymore for $V\otimes\chi.$ However, such condition is only used in \cite[Proposition 3.3.7 case 3(b)]{JSW}. As we will explain below, it is going to enough for us that \eqref{sst} holds for $V.$

Finally, we assume the two following hypotheses that do depend on the choice of $\chi.$
\begin{equation}[corank-1]\tag{corank $1$}
    \Cohomology{1}{\BK}{K}{W\otimes\chi}_\div\iso L/\O_L\quad\text{and}\quad\Cohomology{1}{\f}{K_w}{W\otimes\chi}\iso L/\O_L\quad\text{for }w\mid p,
\end{equation}
and
\begin{equation}[sur]\tag{sur}
    \Cohomology{1}{\BK}{K}{W\otimes\chi}_\div\xtwoheadrightarrow{\loc{w}}\Cohomology{1}{\f}{K_w}{W\otimes\chi}\quad\text{for }w\mid p.
\end{equation}

\begin{theorem}\label{anticyclotomic-control}
    If $f_\ac(T)$ is a generator of $\Char(X_{\ac}(\A)),$ then
    \begin{equation}
        \abs{\O_L/f_\ac(\chi^{-1}(\gamma)-1)}=\abs{\Sha_{\mathrm{BK}}(W\otimes\chi/K)}\cdot(\abs{\delta_v(\chi)})^2\cdot C(W\otimes\chi)
    \end{equation}
    where we have that $\delta_v(\chi)\defeq\Coker{\Cohomology{1}{\BK}{K}{T\otimes\chi}\to\Cohomology{1}{\f}{K_v}{T\otimes\chi}/\Cohomology{1}{}{K_v}{T\otimes\chi}_\tor},$ that $\Sha_{\mathrm{BK}}(W\otimes\chi/K)\defeq \Cohomology{1}{\BK}{K}{W\otimes\chi}/ \Cohomology{1}{\BK}{K}{W\otimes\chi}_\div,$ and that
    \begin{equation}
        C(W\otimes\chi)\defeq \abs{\Cohomology{0}{}{K_v}{W\otimes\chi}}\cdot\abs{\Cohomology{0}{}{K_{\conj{v}}}{W\otimes\chi}}\cdot \prod_{w\in S_p}\abs{\Cohomology{1}{\ur}{K_w}{W\otimes\chi}}
    \end{equation}
    where $S_p$ is the finite set of places where $V\otimes\chi$ is ramified, excluding the places $w\mid p.$
\end{theorem}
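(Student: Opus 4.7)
The plan is to reduce the theorem to a direct application of the Jetchev--Skinner--Wan control theorem \cite[Theorem 3.3.1]{JSW} applied to the triple $(V\otimes\chi, T\otimes\chi, W\otimes\chi)$ and its associated Iwasawa module $M=\A\otimes\chi$, and then translate the resulting generator of $\Char(X_\ac(M))$ specialized at $T=0$ back into a value of the generator $f_\ac$ of $\Char(X_\ac(\A))$ via \cref{comparison-M-A}.

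First I would verify the input hypotheses for \cite[Theorem 3.3.1]{JSW} applied to $V\otimes\chi$. As already observed in \Cref{section4.1}, the conditions \eqref{tau-dual}, \eqref{geom}, \eqref{pure}, \eqref{2-dim}, \eqref{HT} and \eqref{irred} all transfer automatically from $V$ to $V\otimes\chi$ since $\chi$ is finite order anticyclotomic and $L/L_0$ is the unramified extension obtained by adjoining the values of $\chi$; moreover \eqref{corank-1} and \eqref{sur} have been imposed by hypothesis. The one subtle point is \eqref{sst}, which may fail for $V\otimes\chi$ even when it holds for $V$; however, as noted after \eqref{irred}, \eqref{sst} enters in \cite{JSW} only in case 3(b) of \cite[Proposition 3.3.7]{JSW}, and a careful reading shows that what is actually used there is semistability of the underlying representation at $w\mid p$, which we do have for $V$. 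I would include a short lemma making this explicit.

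Next, applying the JSW control theorem to $M=\A\otimes\chi$ at the prime of $\Lambda\otimes_{\Z_p}\O_L$ corresponding to the trivial character $\gamma\mapsto 1$ (i.e.\ $T=0$ under the identification $\Lambda\otimes_{\Z_p}\O_L=\O_L\grpring{T}$) yields an exact statement of the form
\begin{equation}
\abs{\O_L/f_\chi(0)}=\abs{\Sha_{\mathrm{BK}}(W\otimes\chi/K)}\cdot\abs{\delta_v(\chi)}^2\cdot C(W\otimes\chi),
\end{equation}
where $f_\chi$ is any generator of $\Char(X_\ac(M))$ and the local term $C(W\otimes\chi)$ is precisely the product of the Euler-like factors that appear in the error term of \cite[Theorem 3.3.1]{JSW}. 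The identification of these local factors with $\abs{\Cohomology{0}{}{K_v}{W\otimes\chi}}\cdot\abs{\Cohomology{0}{}{K_{\conj v}}{W\otimes\chi}}\cdot\prod_{w\in S_p}\abs{\Cohomology{1}{\ur}{K_w}{W\otimes\chi}}$ is a local calculation using the definition of $\Fac$: at $w=\conj v$ and at split $w\nmid p\infty$ the control cokernel contributes $\Cohomology{0}{}{K_w}{W\otimes\chi}$ or $\Cohomology{1}{\ur}{K_w}{W\otimes\chi}$ respectively, while at $v$ the local condition is Bloch--Kato and contributes $\delta_v(\chi)^2$ together with $\Cohomology{0}{}{K_v}{W\otimes\chi}$.

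Finally, I would apply \cref{comparison-M-A} to obtain $f_\chi(T)=f_\ac((1+T)\chi^{-1}(\gamma)-1)$, so that $f_\chi(0)=f_\ac(\chi^{-1}(\gamma)-1)$, which gives the desired identity. The main obstacle I anticipate is bookkeeping: the JSW setup fixes a specific choice of lattice and pairs the control theorem with a Selmer structure whose local conditions must be matched on the nose with $\Fac$, and one must carefully check that all the local error terms arising from the Poitou--Tate control diagram recombine into exactly $C(W\otimes\chi)\cdot\abs{\delta_v(\chi)}^2$, without extra factors coming from inert or ramified primes of $\Q$ which are split in $K$ or from the archimedean places. The semistability loophole for $V\otimes\chi$ is the only conceptual subtlety; everything else should follow by direct appeal to \cite[Theorem 3.3.1]{JSW} together with \cref{comparison-M-A}.
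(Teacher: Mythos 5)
Your proposal matches the paper's proof essentially exactly: the paper also deduces the formula directly from \cite[Theorem 3.3.1]{JSW} together with \cite[Proposition 3.2.1]{JSW} for the identification of the local error terms, and then uses \cref{comparison-M-A} to convert $f_\chi(0)$ into $f_\ac(\chi^{-1}(\gamma)-1)$. Your handling of the \eqref{sst} loophole is also the paper's: it notes that \eqref{sst} enters only in case 3(b) to show $(V\otimes\chi)^{P_v}=0$, and since $\chi$ is trivial on $P_v=\Ker{\Psi|_{G_{K_v}}}$ this reduces to $V^{P_v}=0$, which follows from \eqref{sst} for $V$ itself.
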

\begin{proof}
    This follows from \cite[Theorem 3.3.1]{JSW}, \cref{comparison-M-A} and \cite[Proposition 3.2.1]{JSW}.
    
    As mentioned before, we must justify why \cite[Theorem 3.3.1]{JSW} holds since we do not have \eqref{sst} for $V\otimes\chi$ in general.
    
    The only change is in case 3(b). We will show that it suffices to know that $V$ is semistable for the argument to go through. Let $P_v=\Ker\Psi|_{G_{K_v}}$ as in their proof. The argument in \cite{JSW} relies on \eqref{sst} only to prove that $(V\otimes\chi)^{P_v}=0.$

    Since $\chi$ factors through $K_{\infty}/K,$ it is trivial on $P_v.$ So $(V\otimes\chi)^{P_v}=V^{P_v}\otimes_{L_0}L(\chi),$ and the argument in \cite[Theorem 3.3.1]{JSW} shows that $V^{P_v}=0$ since $V$ satisfies \eqref{sst}.
\end{proof}

\subsection{Special value formula}
The goal of this section is to compute the term $\abs{\delta_v(\chi)}$ in \cref{anticyclotomic-control} in terms of both the global index and the Bloch--Kato logarithm of some $c\in\Cohomology{1}{\f}{K}{T\otimes\chi}$ with non-torsion localization. When we later apply these results to the case of the Galois representation of an elliptic curve, we will take $c=\kappa_1^\chi$ as in \Cref{finite-level-kol}.

If $V'$ is a finite dimensional $G_{\Q_p}$ representation, we let  $\dperiod{*}{F_k}{V'}\defeq(V'\otimes_{\Q_p}\mathbb{B}_*)^{G_{F_k}}$ for $*\in\{\mathrm{cris},\mathrm{dR}\},$ where $\mathbb{B}_{\mathrm{cris}},\mathbb{B}_{\mathrm{dR}}$ are Fontaine's rings of cristalline and deRham $p$-adic periods, respectively. If $V'$ has coefficients in $L',$ then $\dperiod{*}{F_k}{V'}$ is a free $L'\otimes_{\Q_p}F_k$-module. The filtration on $\mathbb{B}_{\mathrm{dR}}$ induces a filtration in $\ddr{F_k}{V'},$ and the action of the cristalline Frobenius $\Phi$ in $\mathbb{B}_{\mathrm{cris}}$ induces an action of $\Phi$ in $\dcris{F_k}{V'}.$

In addition to the conditions on $V$ imposed so far, we will also assume that
\begin{equation}[cris]\tag{cris}
    V\text{ is crystalline as a representation of }G_{K_w},\text{ for }w\mid p,
\end{equation}
that
\begin{equation}[rank-1]\tag{rank 1}
    \Cohomology{1}{\f}{K}{T\otimes\chi}\iso\O_L\quad\text{and}\quad\Cohomology{1}{\f}{K_w}{V}\iso L_0\text{ for }w\mid p.
\end{equation}
and that
\begin{equation}[no-inv]\tag{no inv}
    V^{G_{F_k}}=(V^\vee(1))^{G_{F_k}}=0\quad\text{for all }k\ge0.
\end{equation}

We also assume that
\begin{equation}[e=f]\tag{Euler factor}
    \dcris{F_k}{V}^{\Phi=1}=0\quad\text{for all }k\ge 0
\end{equation}
and
\begin{equation}[e=f-chi]\tag{$\chi$-Euler factor}
    \dcris{}{V\otimes\chi}^{\Phi=1}=0.
\end{equation}
\begin{remark}\label{redundancy}
    We note how some of the conditions imposed so far are redundant.
    \begin{enumerate}
        \item The second part of \eqref{corank-1} follows from \eqref{tau-dual}, \eqref{no-inv} and the first part of \eqref{rank-1}: the Tate dual of $\Cohomology{1}{\f}{K_w}{W\otimes\chi}$ under Tate local duality is $\Cohomology{1}{/\f}{K_w}{T^\vee(1)\otimes\chi^{-1}}=\Cohomology{1}{/\f}{K_{\conj{w}}}{T\otimes\chi}.$ We have $\dim_L\Cohomology{1}{}{K_{\conj{w}}}{V}=2$ by \eqref{no-inv} with local Tate duality and the local Euler characteristic formula. Together with the first part of \eqref{rank-1}, this means that $\Cohomology{1}{/\f}{K_{\conj{w}}}{T\otimes\chi}$ has rank $1$ over $\O_L.$ Since it is also torsion-free, we have $\Cohomology{1}{/\f}{K_{\conj{w}}}{T\otimes\chi}\iso\O_L$ and hence $\Cohomology{1}{\f}{K_w}{W\otimes\chi}\iso L/\O_L.$
        \item \eqref{e=f} and \eqref{e=f-chi} follow from \eqref{tau-dual}, \eqref{pure} and \eqref{cris}: The conditions \eqref{pure} and \eqref{cris} imply that the eigenvalues of $\Phi$ in $\dcris{F_k}{V}$ are Weil numbers of absolute value $(\abs{\O_{F_k}/\m_k})^{m/2}$ for some $m\in\Z,$ where $\m_k$ is the maximal ideal of $\O_{F_k}.$ Since this is also true for the completion of $K_\infty$ at any prime above $p$ with the same $m,$ \eqref{tau-dual} implies that $m/2=-m/2-1,$ and so $m=-1.$ In particular, $\dcris{F_k}{V}^{\Phi=1}=0,$ and so \eqref{e=f} holds. The same argument works for $V\otimes\chi$ since it is also \eqref{tau-dual}, \eqref{pure} and \eqref{cris}.
    \end{enumerate}
\end{remark}

The conditions \eqref{e=f} and \eqref{e=f-chi} imply that we have isomorphisms
\begin{equation}
    \frac{\dr{F_k}{V}}{\Fil{0}{\dr{F_k}{V}}}\xrightiso{\exp}\Cohomology{1}{\f}{F_k}{V}\quad\text{and}\quad \frac{\dr{}{V\otimes\chi}}{\Fil{0}{\dr{}{V\otimes\chi}}}\xrightiso{\exp}\Cohomology{1}{\f}{\Q_p}{V\otimes\chi},
\end{equation}
by \cite[Corollary 3.8.4]{Bloch-Kato} and \cite[Definition 3.10]{Bloch-Kato}.

So under the conditions \eqref{e=f} and \eqref{2-dim}, we have that the second part of \eqref{rank-1} is implied by $V$ having Hodge--Tate weights $(\lambda_1,\lambda_2)$ with $\lambda_1\le 0<1\le\lambda_2$ as a representation of $G_{K_v}$: this implies the second part of \eqref{rank-1} for $w=v$ by the exponential map above, and by \eqref{tau-dual} the Hodge--Tate weights of $V$ as a representation of $G_{K_{\conj{v}}}$ are $(1-\lambda_2,1-\lambda_1),$ which also satisfy $(1-\lambda_2)\le 0<1\le(1-\lambda_1),$ and hence the second part of \eqref{rank-1} is also satisfied for $w=\conj{v}.$

We also assume
\begin{equation}[ordinary]\tag{ordinary}
    V\text{ is ordinary at }v.
\end{equation}
By this we mean that there is a $G_{K_v}$-stable subspace $V^+$ whose Hodge--Tate weight is $\lambda_2.$ We denote $V^-\defeq V/V^+$ and $T^\pm\defeq T\cap V^\pm,$ $W^\pm\defeq V^\pm/T^\pm.$

Finally, we assume
\begin{equation}[not-anomalous]\tag{not-anomalous}
    \Cohomology{0}{}{K_v}{W^-}=0.
\end{equation}

Let $\omega\in\Fil{1}{\ddr{}{V^\vee}}$ be a nonzero period. When we later apply these results to the case of the Galois representation of an elliptic curve, $\omega$ will be a N\'eron differential. Given such a $\omega,$ we consider the isomorphism
\begin{equation}
    \log_\omega\colon\Cohomology{1}{\f}{\Q_p}{V\otimes\chi}\xrightiso{\log}\frac{\ddr{}{V\otimes\chi}}{\Fil{0}{\ddr{}{V\otimes\chi}}}\xrightiso{\pair{\cdot}{\omega}}\ddr{}{L(\chi)}=(F_s\otimes_{\Q_p}L(\chi))^{G_{\Q_p}}.
\end{equation}
We note that the second map is an isomorphism since it is a nonzero map of vector spaces of same dimension by \eqref{rank-1}. Here, $s$ is such that $\Ker{\chi\mid_{G_{K_v}}}=G_{F_s}.$

We also consider
\begin{equation}
    \log_\omega\colon\Cohomology{1}{\f}{F_k}{V}\xrightiso{\log}\frac{\ddr{F_k}{V}}{\Fil{0}{\ddr{F_k}{V}}}\xrightiso{\pair{\cdot}{\omega}}\ddr{F_k}{L_0}=F_k\otimes_{\Q_p}L_0,
\end{equation}
and we denote
\begin{equation}
    A(T,\omega,k)\defeq\frac{\Index{\O_{F_k}\otimes_{\Z_p}\O_{L_0}}{\log_\omega(\Cohomology{1}{\f}{F_k}{T})}}{\abs{\Cohomology{1}{\f}{F_k}{T}_\tor}}.
\end{equation}

\begin{theorem}\label{first-special-value-formula}
Let $c\in\Cohomology{1}{\BK}{K}{T\otimes\chi}$ be any class with non-torsion localization at $v.$ Then we have
\begin{equation}
    \abs{\delta_v(\chi)}\cdot\abs{\Cohomology{0}{}{K_v}{W\otimes\chi}}=\frac{\abs{\O_{F_s}\otimes_{\Z_p[G_s]}\O_L(\chi^{-1})/\log_\omega(\loc{v}z)}}{\abs{\O_L/p^s}\cdot\Index{\Cohomology{1}{\BK}{K}{T\otimes\chi}}{\O_L\cdot z}}\cdot\frac{A(T,\omega,s-1)}{A(T,\omega,s)}.
\end{equation}
\end{theorem}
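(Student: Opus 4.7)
The plan is to reduce the identity to a purely local statement comparing $\log_\omega(\Cohomology{1}{\f}{K_v}{T\otimes\chi})$ with the cyclotomic lattices $\log_\omega(\Cohomology{1}{\f}{F_k}{T})$ for $k=s,s-1$, and to establish this local statement via Shapiro's lemma applied to the tower $\Q_p \subseteq F_{s-1} \subseteq F_s$.

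First, I would unpack the LHS as a single local index. From the long exact sequence attached to $0 \to T\otimes\chi \to V\otimes\chi \to W\otimes\chi \to 0$, combined with $\Cohomology{0}{}{K_v}{V\otimes\chi}=0$ (a consequence of \eqref{no-inv}, since $\chi\mid_{G_{K_v}}$ factors through $G_{F_s}$ and $V^{G_{F_s}}=0$), the torsion of $\Cohomology{1}{}{K_v}{T\otimes\chi}$ coincides with $\Cohomology{0}{}{K_v}{W\otimes\chi}$ and sits inside $\Cohomology{1}{\f}{K_v}{T\otimes\chi}$. Combined with the fact that $\loc{v}(z)$ is non-torsion and $\Cohomology{1}{\BK}{K}{T\otimes\chi}\cong\O_L$ is free of rank one, a direct manipulation of lattice indices yields
\begin{equation}
|\delta_v(\chi)| \cdot |\Cohomology{0}{}{K_v}{W\otimes\chi}| \cdot \Index{\Cohomology{1}{\BK}{K}{T\otimes\chi}}{\O_L\cdot z} = \Index{\Cohomology{1}{\f}{K_v}{T\otimes\chi}}{\O_L\cdot\loc{v}(z)}.
\end{equation}

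Next, I would apply $\log_\omega$: it is an $L$-linear isomorphism of $\Cohomology{1}{\f}{K_v}{V\otimes\chi}$ onto $\ddr{}{L(\chi)}$ whose kernel on the integral lattice $\Cohomology{1}{\f}{K_v}{T\otimes\chi}$ is precisely the torsion subgroup. Hence the previous index transforms into
\begin{equation}
\Index{\Cohomology{1}{\f}{K_v}{T\otimes\chi}}{\O_L\cdot\loc{v}(z)} = |\Cohomology{0}{}{K_v}{W\otimes\chi}| \cdot \Index{\log_\omega(\Cohomology{1}{\f}{K_v}{T\otimes\chi})}{\O_L\cdot\log_\omega(\loc{v}(z))}.
\end{equation}
Combining the two reductions, and noting that $\O_L\cdot\log_\omega(\loc{v}(z)) \subseteq \log_\omega(\Cohomology{1}{\f}{K_v}{T\otimes\chi}) \subseteq \O_{F_s}\otimes_{\Z_p[G_s]}\O_L(\chi^{-1})$ allows one to multiply indices through an intermediate lattice, the theorem is equivalent to the purely local identity
\begin{equation}
\Index{\O_{F_s}\otimes_{\Z_p[G_s]}\O_L(\chi^{-1})}{\log_\omega(\Cohomology{1}{\f}{K_v}{T\otimes\chi})} = |\O_L/p^s| \cdot \frac{A(T,\omega,s)}{A(T,\omega,s-1)}.
\end{equation}

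The heart of the proof is this local identity. The strategy is Shapiro's lemma for $\Q_p \subseteq F_{s-1} \subseteq F_s$: via $\Cohomology{1}{}{F_s}{T} = \Cohomology{1}{}{\Q_p}{T\otimes\Z_p[G_s]}$ and the decomposition $L[G_s] = \bigoplus_\psi L(\psi)$ over characters $\psi\colon G_s\to\units{\O_L}$ (valid because $L$ contains all $p^s$-th roots of unity), one obtains $\Cohomology{1}{}{F_s}{T}\otimes_{\Q_p}L = \bigoplus_\psi\Cohomology{1}{}{\Q_p}{T\otimes\psi}$, with the $\Cohomology{1}{}{F_{s-1}}{T}$-part picking out exactly those $\psi$ factoring through $G_{s-1}$. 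Since $\chi$ has exact local order $p^s$, there is a natural integral map $\Cohomology{1}{}{F_s}{T}\otimes_{\Z_p[G_s]}\O_L(\chi^{-1}) \to \Cohomology{1}{}{\Q_p}{T\otimes\chi}$ whose cokernel is controlled by the failure of $|G_s|=p^s$ to be invertible in $\Z_p$, accounting for the factor $|\O_L/p^s|$. Restricting to the finite parts and translating through $\log_\omega$, the $A$-invariants are exactly the lattice volumes at each level, and the ratio $A(T,\omega,s)/A(T,\omega,s-1)$ isolates the contribution new at level $s$. The main obstacle is the careful integral bookkeeping through the Shapiro isomorphism: identifying $\log_\omega(\Cohomology{1}{\f}{K_v}{T\otimes\chi})$ as the appropriate $\chi^{-1}$-isotypic component of $\log_\omega(\Cohomology{1}{\f}{F_s}{T})$ modulo $\log_\omega(\Cohomology{1}{\f}{F_{s-1}}{T})$, and computing the Tate cohomology of $G_s$ acting on the relevant lattices to extract exactly the factor $|\O_L/p^s|$.
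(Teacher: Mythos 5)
Your global reductions (the two displayed identities relating $\abs{\delta_v(\chi)}$, the torsion, and the local index of $\O_L\cdot\loc{v}(z)$) are both correct and match the first half of the paper's proof, and your plan for the local computation — descent for the cyclic group $G_s$ via Shapiro/the $\chi$-isotypic projection, comparison of the lattices $\log_\omega(\Cohomology{1}{\f}{F_k}{T})$ at levels $s$ and $s-1$, with $\abs{\O_L/p^s}$ coming from non-invertibility of $\lvert G_s\rvert$ — is essentially the route the paper takes in \cref{appendix-section}. However, there is a concrete arithmetic error in your reduction to the ``purely local identity.'' Chasing your own two equations through the statement of the theorem, the correct target is
\begin{equation}
    \Index{\O_{F_s}\otimes_{\Z_p[G_s]}\O_L(\chi^{-1})}{\log_\omega\bigl(\Cohomology{1}{\f}{K_v}{T\otimes\chi}\bigr)}=\abs{\Cohomology{0}{}{K_v}{W\otimes\chi}}\cdot\abs{\O_L/p^s}\cdot\frac{A(T,\omega,s)}{A(T,\omega,s-1)},
\end{equation}
whereas you have dropped the factor $\abs{\Cohomology{0}{}{K_v}{W\otimes\chi}}=\abs{\Cohomology{1}{\f}{K_v}{T\otimes\chi}_\tor}$: it appears once on the left-hand side of the theorem and once in your equation (2), and these occurrences do not cancel against each other. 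This factor is not absorbed by the $A$-invariants either; since $A(T,\omega,k)$ divides out $\abs{\Cohomology{1}{\f}{F_k}{T}_\tor}$, what actually enters is the identification $\Cohomology{1}{\f}{\Q_p}{T\otimes\chi}_\tor\iso\Cohomology{1}{\f}{F_s}{T}[\Tr]_\tor$ together with the behaviour of torsion under $\Tr_{F_s/F_{s-1}}$ (\cref{comp2} and the final step of \cref{appendix-section}), and the factor is genuinely nontrivial in general — the paper carries it all the way to \cref{formula-index-2}, where it cancels only against its conjugate at $\conj{v}$.

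A second, smaller issue: your sketch of the ``heart'' treats the integral compatibility of the finite (Bloch--Kato) conditions under restriction $\Q_p\to F_s$ and the identification of the level-$(s-1)$ contribution as routine. In the paper these are exactly the places where \eqref{ordinary}, \eqref{not-anomalous} and \eqref{no-inv} are used: one needs $\Cohomology{1}{\f}{F_k}{T}=\Cohomology{1}{}{F_k}{T^+}$ integrally (\cref{ordinary-lemma}), the restriction isomorphism $\Cohomology{1}{\f}{\Q_p}{T\otimes\chi}\iso(\Cohomology{1}{\f}{F_s}{T}\otimes\Z_p(\chi))^{G_s}$ (\cref{twist-compatibility}), and the surjectivity of $\Tr\colon\Cohomology{1}{\f}{F_s}{T}\to\Cohomology{1}{\f}{F_{s-1}}{T}$ (\cref{C(T)=1}). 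Without invoking these, the ``integral bookkeeping through the Shapiro isomorphism'' you defer to cannot be completed.
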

\begin{proof}
Note that \eqref{no-inv} implies that $\abs{\Cohomology{0}{}{K_v}{W\otimes\chi}}=\abs{\Cohomology{1}{\f}{K_v}{T\otimes\chi}}_\tor.$

As $\Cohomology{1}{\BK}{K}{T\otimes\chi}$ is a free rank one $\O_L$-module by \eqref{rank-1}, the existence the above class $c\in\Cohomology{1}{\BK}{K}{T\otimes\chi}$ with non-torsion localization means that we have an injection
\begin{equation}
    \Cohomology{1}{\BK}{K}{T\otimes\chi}\hookrightarrow \Cohomology{1}{\f}{K_v}{T\otimes\chi}_{/\tor}.
\end{equation}
So we may write
\begin{equation}
    \abs{\delta_v(\chi)}=\frac{\Index{\Cohomology{1}{\f}{K_v}{T\otimes\chi}_{/\tor}}{\O_L\cdot \loc{v}c}}{\Index{\Cohomology{1}{\BK}{K}{T\otimes\chi}}{\O_L\cdot c}}.
\end{equation}

So it remains to show that
\begin{equation}[eq_computation-twist]
    \Index{\Cohomology{1}{\f}{K_v}{T\otimes\chi}}{ \O_L\cdot\loc{v}c}\isequal\frac{\abs{\O_{F_s}\otimes_{\Z_p[G_s]}\O_L(\chi^{-1})/\log_\omega(\loc{v}c)}}{\abs{\O_L/p^s}}\cdot\frac{A(T,\omega,s-1)}{A(T,\omega,s)}.
\end{equation}
This calculation is done in \Cref{appendix-section}.
\end{proof}

\begin{definition}
For a character $\chi\colon\Gal{F_n}{F_m}\to\units{\C_p},$ we define its (Galois) Gauss sum $\mathfrak{g}(\chi),$ well defined up to a unit in $\Z_p[\chi],$ to be the Gauss sum of the Dirichlet character 
\begin{equation}
    \units{(\Z/p^{n-m+1}\Z)}\rightiso \Z/(p-1)p^{n-m}\Z\twoheadrightarrow \Z/p^{n-m}\Z\rightiso\Gal{F_n}{F_m}\xrightarrow{\chi}\units{\C_p}
\end{equation}
for any choice of the two isomorphisms. In other words, for a choice of generator $g\in\units{(\Z/p^{n-m+1}\Z)}$ and of generator $\xi\in\Gal{F_n}{F_m},$ we let
\begin{equation}
    \mathfrak{g}(\chi)=\sum_{j=0}^{(p-1)p^{n-m}}\chi(\xi^{j})e^{2\pi i g^j/p^{n-m+1}}.
\end{equation}
\end{definition}

\begin{corollary}\label{formula-index}
    Let $c\in\Cohomology{1}{\mathcal{F}}{K}{T\otimes\chi}$ be any class with non-torsion localization at $v.$ Let $p^s$ be the (Galois) conductor of $\chi\lvert_{G_{K_v}}.$ Then we have
    \begin{equation}
    \begin{split}
        &f_\ac(\chi^{-1}(\gamma)-1)\equal\left(\frac{\log_{\omega,v}z}{\mathfrak{g}\left(\chi\lvert_{\Gal{F_s}{F_{f_0}}}\right)}\right)^2
        \iff\\
        &\hspace{50pt}\frac{\Index{\Cohomology{1}{\BK}{K}{T\otimes\chi}}{\O_L\cdot z}^2}{\prod_{w\mid N^+}c_w(W\otimes\chi)}= \abs{\Sha_{\mathrm{BK}}(W\otimes\chi/K)}\cdot\frac{A(T,\omega,s-1)}{A(T,\omega,s)}\cdot\frac{\abs{\Cohomology{0}{}{K_{\conj{v}}}{W\otimes\chi}}}{\abs{\Cohomology{0}{}{K_v}{W\otimes\chi}}},
    \end{split}
    \end{equation}
    where
    \begin{equation}
        \log_{\omega,v}\colon\Cohomology{1}{\f}{K}{T\otimes\chi}\xrightarrow{\loc{v}}\Cohomology{1}{\f}{K_v}{T\otimes\chi}\xrightarrow{\log_\omega}(F_s\otimes_{\Q_p}L(\chi))^{G_{\Q_p}}\to F_s\cdot L
    \end{equation}
    and
    \begin{equation}
        c_w(W\otimes\chi)\defeq\abs{\Cohomology{1}{\ur}{K_w}{W\otimes\chi}}
    \end{equation}
\end{corollary}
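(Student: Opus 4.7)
The plan is to combine \cref{anticyclotomic-control} with the square of \cref{first-special-value-formula}, applied with $c=z$, to obtain a single mass formula, and then to identify the Gauss-sum factor relating the local logarithm in the Galois-equivariant module $(F_s\otimes_{\Q_p} L(\chi))^{G_{\Q_p}}$ with its incarnation in $F_s\cdot L$.

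First I would square \cref{first-special-value-formula} (applied to $z$) and substitute the resulting expression for $\abs{\delta_v(\chi)}^2$ into \cref{anticyclotomic-control}. The two powers of $\abs{\Cohomology{0}{}{K_v}{W\otimes\chi}}$ arising from this substitution partially cancel against the factor of the same absolute value appearing in $C(W\otimes\chi)$, so that the surviving $H^0$-contribution is the asymmetric ratio $\abs{\Cohomology{0}{}{K_{\conj{v}}}{W\otimes\chi}}/\abs{\Cohomology{0}{}{K_v}{W\otimes\chi}}$ asserted in the corollary. The remaining places $w\in S_p\setminus\{v,\conj{v}\}$ correspond exactly to $w\mid N^+$ in the setting of the corollary, recovering the Tamagawa product $\prod_{w\mid N^+}c_w(W\otimes\chi)$.

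After this substitution, the identity takes the form
\[
\frac{\abs{\O_L/f_\ac(\chi^{-1}(\gamma)-1)}\cdot\Index{\Cohomology{1}{\BK}{K}{T\otimes\chi}}{\O_L\cdot z}^2}{\prod_{w\mid N^+}c_w(W\otimes\chi)}\equal \abs{\Sha_{\mathrm{BK}}(W\otimes\chi/K)}\cdot\frac{\abs{\O_{F_s}\otimes_{\Z_p[G_s]}\O_L(\chi^{-1})/\log_\omega(\loc{v}z)}^2}{\abs{\O_L/p^s}^2}\cdot\left(\frac{A(T,\omega,s-1)}{A(T,\omega,s)}\right)^2\cdot\frac{\abs{\Cohomology{0}{}{K_{\conj{v}}}{W\otimes\chi}}}{\abs{\Cohomology{0}{}{K_v}{W\otimes\chi}}}.
\]
Comparing to the ``$\iff$'' of the corollary, the two sides of the equivalence then differ only by whether one writes $\abs{\O_L/f_\ac(\chi^{-1}(\gamma)-1)}$ or its image under the Gauss-sum identification on the left-hand side.

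The main obstacle is therefore the purely local identification
\[
\frac{\abs{\O_{F_s}\otimes_{\Z_p[G_s]}\O_L(\chi^{-1})/\log_\omega(\loc{v}z)}}{\abs{\O_L/p^s}}\equal \abs{\log_{\omega,v}(z)/\mathfrak{g}\bigl(\chi|_{\Gal{F_s}{F_{f_0}}}\bigr)}.
\]
This is a resolvent-type computation at $v$: a Galois-equivariant generator of the $\chi^{-1}$-isotypic piece of $\O_{F_s}\otimes_{\Z_p}\O_L$ under $\Gal{F_s}{F_{f_0}}$ is, up to a unit, the local resolvent, whose image in $F_s\cdot L$ is precisely the Galois Gauss sum $\mathfrak{g}(\chi|_{\Gal{F_s}{F_{f_0}}})$; the factor $\abs{\O_L/p^s}$ encodes the conductor discrepancy incurred when passing from the Galois-equivariant module to the scalar quotient $F_s\cdot L$. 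Once this identification is carried out (which I expect to do via classical formulas for local Galois resolvents, or by leveraging the computation in \Cref{appendix-section} already invoked in the proof of \cref{first-special-value-formula}), the equivalence of the two displayed formulas follows by taking square roots (valid up to $p$-adic units, i.e.\ under $\equal$) and rearranging.
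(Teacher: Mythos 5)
Your proposal is correct and follows essentially the same route as the paper: combine \cref{anticyclotomic-control} with \cref{first-special-value-formula} to cancel the $\Cohomology{0}{}{K_v}{W\otimes\chi}$ factors and isolate the asymmetric $H^0$-ratio and the Tamagawa product, then reduce everything to the purely local identity $\frac{\abs{\O_{F_s}\otimes_{\Z_p[G_s]}\O_L(\chi^{-1})/\log_\omega(\loc{v}z)}}{\abs{\O_L/p^s}}\equal\abs{\O_L/\tfrac{\log_{\omega,v}z}{\mathfrak{g}(\chi|_{\Gal{F_s}{F_{f_0}}})}}$, which is exactly the resolvent computation the paper carries out in \Cref{app-sec2}. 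The only loose end is bookkeeping of the exponent on $A(T,\omega,s-1)/A(T,\omega,s)$ (your combined identity correctly carries its square, while the corollary as stated has the first power), but this is immaterial since that ratio is a unit in every application (cf.\ \cref{formula-index-2}).
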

\begin{proof}
From \cref{anticyclotomic-control} and \cref{first-special-value-formula}, it suffices to prove that
\begin{equation}
    \frac{\abs{\O_{F_s}\otimes_{\Z_p[G_s]}\O_L(\chi^{-1})/\log_\omega(\loc{v}z)}}{\abs{\O_L/p^s}}=\abs{\O_L/\frac{u\cdot\log_{\omega,v}z}{\mathfrak{g}\left(\chi\lvert_{\Gal{F_s}{F_{f_0}}}\right)}}.
\end{equation}
where $u\in\units{(\Z_p^\ur)}$ is such that $u\cdot \log_{\omega,v} z\in\O_L.$ This is done in \Cref{app-sec2}.
\end{proof}

We also note that the vanishing of $c_w(W\otimes\chi)$ is equivalent to that of $c_w(W).$

\begin{proposition}\label{Tam-cong}
    For any $\lambda\in S_p$ we have
    \begin{equation}
        c_\lambda(W\otimes\chi)=1\iff c_\lambda(W)=1.
    \end{equation}
\end{proposition}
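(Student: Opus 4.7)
My plan is to reduce the equivalence to a Nakayama argument on a finite module, exploiting that $\chi$ has $p$-power order and $\lambda \nmid p$. Since the anticyclotomic $\Z_p$-extension of $K$ is unramified outside $p$ and $\lambda \in S_p$ lies away from $p$, the character $\chi$ is unramified at $\lambda$. Hence $(W \otimes \chi)^{I_\lambda} = W^{I_\lambda} \otimes_{\Z_p} \O_L$, and geometric Frobenius acts on this module as $\alpha \cdot F_\lambda$, where $F_\lambda$ denotes the Frobenius acting on $W^{I_\lambda}$ and $\alpha := \chi(\mathrm{Frob}_\lambda) \in \units{\O_L}$. Since $\chi$ has $p$-power order, $\alpha \equiv 1 \pmod{\m_L}$. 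Using the identification $\Cohomology{1}{\ur}{K_\lambda}{N} = N^{I_\lambda}/(\mathrm{Frob}_\lambda - 1)N^{I_\lambda}$, the proposition becomes the assertion that, with $M := W^{I_\lambda}$,
\begin{equation}
    M/(F_\lambda - 1)M = 0 \iff (M \otimes_{\Z_p} \O_L)/(\alpha F_\lambda - 1)(M \otimes_{\Z_p} \O_L) = 0.
\end{equation}

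Next I would use the short exact sequence $0 \to M_\div \to M \to M_{\mathrm{fin}} \to 0$, where $M_\div := V^{I_\lambda}/T^{I_\lambda}$ is the maximal divisible subgroup (which is $F_\lambda$-stable, being a characteristic subgroup) and $M_{\mathrm{fin}}$ is the finite quotient. By the purity hypothesis \eqref{pure}, the eigenvalues of $F_\lambda$ on $V^{I_\lambda}$ are Weil numbers whose archimedean absolute values lie off the unit circle, hence never coincide with $1$ or with the root of unity $\alpha^{-1}$. Consequently both $F_\lambda - 1$ and $\alpha F_\lambda - 1$ act bijectively on the divisible pieces $M_\div$ and $M_\div \otimes_{\Z_p} \O_L$ respectively. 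A snake lemma computation then identifies both cokernels above with their restrictions to the finite parts, so it suffices to establish the corresponding equivalence with $R := M_{\mathrm{fin}}$ in place of $M$.

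Finally, for the finite-module version, Nakayama's lemma identifies the vanishing of $(R \otimes_{\Z_p} \O_L)/(\alpha F_\lambda - 1)$ with its vanishing modulo $\m_L$; using $\alpha \equiv 1 \pmod{\m_L}$ together with the identification $(R \otimes_{\Z_p} \O_L)/\m_L \cong (R/pR) \otimes_{\F_p} \F_L$, this matches the vanishing of $(R/pR)/(F_\lambda - 1)$ after tensoring with $\F_L$, and hence (by faithful flatness of $\F_L/\F_p$) of $(R/pR)/(F_\lambda - 1)$ itself, and thus (once more by Nakayama) of $R/(F_\lambda - 1)R$. Chaining these equivalences yields the proposition. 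The main subtlety is handling the divisible part of $M$, for which the purity hypothesis is essential; once this is dispatched the argument is just standard Nakayama on finite modules.
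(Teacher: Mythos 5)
Your argument is correct and follows essentially the same route as the paper: $\chi$ is unramified at $\lambda$ and congruent to $1$ modulo $\m_L$, so after identifying $\Cohomology{1}{\ur}{K_\lambda}{-}$ with Frobenius coinvariants of inertia invariants, the equivalence reduces to Nakayama's lemma for finite modules. The only difference is that you justify the finiteness of these groups explicitly (via purity and the divisible/finite decomposition of $W^{I_\lambda}$ — where, strictly, $F_\lambda-1$ is only surjective, not bijective, on the divisible part, which is all you need), a step the paper simply takes for granted.
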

\begin{proof}
We need to show that
\begin{equation}
    \Cohomology{1}{\ur}{K_\lambda}{W\otimes\chi}=0\iff \Cohomology{1}{\ur}{K_\lambda}{W}=0.
\end{equation}
Since they are finite modules, it suffices to show that
\begin{equation}
    \Cohomology{1}{\ur}{K_\lambda}{W\otimes\chi}/\m_L=0\iff\Cohomology{1}{\ur}{K_\lambda}{W}/\m_{L_0}=0.
\end{equation}

By definition, the unramified cohomology is $\Cohomology{1}{\ur}{K_\lambda}{W}=\Cohomology{1}{}{K_\lambda^{\ur}/K_\lambda}{W^{I_\lambda}}.$ As $\chi$ is unramified at $\lambda,$ $\chi$ acts trivially on $I_\lambda,$ so $\Cohomology{1}{\ur}{K_\lambda}{W\otimes\chi}=\Cohomology{1}{}{K_\lambda^\ur/K_\lambda}{W^{I_\lambda}\otimes_{\O_{L_0}}\O_L(\chi)}.$ Since $K_\lambda^{\ur}/K_\lambda$ is procyclic, we have $\abs{\Cohomology{1}{\ur}{K_\lambda}{W}}=W^{I_\lambda}/(\mathrm{Frob}_\lambda-1).$

Now $\Cohomology{1}{\ur}{K_\lambda}{W\otimes\chi}/\m_L= (W^{I_\lambda}\otimes_{\O_{L_0}}\O_L(\chi))/(\m_L,\mathrm{Frob}_\lambda-1).$ Since $\chi\equiv1\mod\m_L,$ this is \begin{equation}
    (W^{I_\lambda}\otimes_{\O_{L_0}}\O_L)/(\m_L,\mathrm{Frob}_\lambda-1)=(W^{I_\lambda}/(\mathrm{Frob}_\lambda-1))\otimes_{\O_{L_0}}\O_L/\m_L=\Cohomology{1}{\ur}{K_\lambda}{W}\otimes_{\O_{L_0}}\O_L/\m_L.
\end{equation}
Hence $\Cohomology{1}{\ur}{K_\lambda}{W\otimes\chi}/\m_L=\Cohomology{1}{\ur}{K_\lambda}{W}\otimes_{\O_{L_0}}\O_L/\m_L$ and the claim follows.
\end{proof}

\subsection{The case of elliptic curves}
Now we restrict to the case where $T=T_pE$ is the Tate module of certain elliptic curves $E/\Q.$ We will assume that $E$ satisfy \cref{assumption-2}$(1).$

We note that \eqref{cris}, and hence \eqref{sst}, follow from $p\nmid N_E.$ \eqref{tau-dual} holds since $V^\tau\iso V,$ as $V$ is a representation of $G_\Q,$ and $V\iso V^\vee(1).$ The hypotheses \eqref{irred} and \eqref{no-inv} follow from \eqref{res-surj}, and \eqref{geom}, \eqref{pure}, \eqref{2-dim} and \eqref{HT} are clearly satisfied. The conditions \eqref{e=f} and \eqref{e=f-chi} are true by \cref{redundancy}$(2),$ and \eqref{ordinary} and \eqref{not-anomalous} follow from $p\nmid a_p(a_p-1).$ Also, note that in this case $L_0=\Q_p$ is unramified.

Finally, we will choose a finite order anticyclotomic character $\chi$ such that \eqref{corank-1}, \eqref{sur} and \eqref{rank-1} hold. We will show these conditions follow from
\begin{equation}[chi-nontrivial]\tag{$\chi$-nontrivial}
    \loc{v}(\kappa_1^{\chi})\text{ and }\loc{v}(\kappa_1^{\chi^{-1}})\text{ are non-torsion},
\end{equation}
where $\kappa_1^\chi$ was constructed in \Cref{finite-level-kol}.

\begin{proposition}
    If \eqref{chi-nontrivial} holds, then so do \eqref{corank-1}, \eqref{sur} and \eqref{rank-1}.
\end{proposition}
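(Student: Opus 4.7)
The plan is to deduce each of the three conditions in turn, using \eqref{chi-nontrivial} as the main input.

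To establish \eqref{rank-1}, note first that its local statement $\Cohomology{1}{\f}{K_w}{V}\iso L_0$ for $w\mid p$ follows from good reduction: $K_w=\Q_p$ since $p$ splits in $K$, and $V=V_pE$ is crystalline with Hodge--Tate weights $0$ and $1$, so the Bloch--Kato exponential identifies $\Cohomology{1}{\f}{\Q_p}{V}$ with the one-dimensional $\ddr{}{V}/\Fil{0}{\ddr{}{V}}$. For the global statement, observe that $(T\otimes\chi,\BK)$ and its dual $(T\otimes\chi^{-1},\BK)$ both satisfy hypotheses \eqref{H.0}--\eqref{H.5}: the residual representation is unchanged by the twist since $\chi\equiv 1\bmod\m_L$, and the conjugate self-dual pairing \eqref{H.4} on $T\otimes\chi$ is assembled from the Weil pairing on $T$ and the trivial bilinear pairing on $\O_L(\chi)$, whose equivariance uses precisely the anticyclotomic identity $\chi(\sigma)\chi(\tau\sigma\tau^{-1})=1$. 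Since $\kappa^\chi\in\KS(T\otimes\chi,\BK,\mathcal{L}_1(T))$ has $\kappa_1^\chi\neq 0$ (its localization at $v$ being non-torsion by \eqref{chi-nontrivial}), \cref{main-dvr} gives that $\Cohomology{1}{\BK}{K}{T\otimes\chi}$ is a free $\O_L$-module of rank one.

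Condition \eqref{corank-1} follows from the previous step by a short diagram chase. The long exact sequence from $0\to T\otimes\chi\to V\otimes\chi\to W\otimes\chi\to 0$ shows that $\Cohomology{1}{\BK}{K}{W\otimes\chi}_\div$ equals the image of $\Cohomology{1}{\BK}{K}{V\otimes\chi}=\Cohomology{1}{\BK}{K}{T\otimes\chi}\otimes_{\O_L}L$, which is one-dimensional over $L$ by the first step, so the quotient is $L/\O_L$. The local statement $\Cohomology{1}{\f}{K_w}{W\otimes\chi}\iso L/\O_L$ is analogous, since this group is by construction the image of $\Cohomology{1}{\f}{K_w}{V\otimes\chi}$, which is one-dimensional over $L$.

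For \eqref{sur}, both source and target are divisible of $\O_L$-corank one, so surjectivity reduces to checking that the induced map on the underlying $L$-lines $\loc{w}\colon\Cohomology{1}{\BK}{K}{V\otimes\chi}\to\Cohomology{1}{\f}{K_w}{V\otimes\chi}$ is nonzero. For $w=v$, this is immediate from \eqref{chi-nontrivial}. For $w=\conj{v}$, use that $V$ is a representation of $G_\Q$ and that $\chi$ is anticyclotomic, so $\chi^\tau=\chi^{-1}$: complex conjugation induces a semilinear isomorphism $\tau^\ast\colon\Cohomology{1}{\BK}{K}{V\otimes\chi}\xrightiso{\tau}\Cohomology{1}{\BK}{K}{V\otimes\chi^{-1}}$ that interchanges localizations at $v$ and $\conj{v}$. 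Hence surjectivity of $\loc{\conj{v}}$ for $V\otimes\chi$ is equivalent to surjectivity of $\loc{v}$ for $V\otimes\chi^{-1}$, which follows from the same rank-one argument applied to $\chi^{-1}$ together with the non-torsion of $\loc{v}(\kappa_1^{\chi^{-1}})$ granted by the second half of \eqref{chi-nontrivial}.

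The main obstacle is verifying that the twisted triple $(T\otimes\chi,\BK)$ genuinely satisfies all of Howard's hypotheses \eqref{H.0}--\eqref{H.5}, most delicately \eqref{H.4} for the conjugate self-dual pairing where the anticyclotomic property of $\chi$ is essential, and the clean tracking of how complex conjugation interchanges the two primes above $p$ while respecting the Bloch--Kato local conditions on both sides.
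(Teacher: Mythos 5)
Your proof is correct and follows essentially the same route as the paper's: apply \cref{main-dvr} to the twisted Selmer triple to obtain \eqref{rank-1} and the global half of \eqref{corank-1}, deduce the local half from the one-dimensionality of the finite local condition, and reduce \eqref{sur} to nonvanishing of $\loc{v},$ using that complex conjugation interchanges $v$ and $\conj{v}$ while sending $\chi$ to $\chi^{-1}$ --- which is exactly why \eqref{chi-nontrivial} demands both twists. The one place where your execution diverges is the surjectivity at $v$: the paper descends to $K_n,$ verifies that restriction respects the Bloch--Kato conditions and is surjective on divisible parts (inflation--restriction plus \eqref{no-inv}), and then exhibits the explicit point $D_0^{\chi^{-1}}P[1]^{\chi^{-1}}\otimes p^{-m}$ with nonvanishing localization; you stay at level $K$ and argue through $V$-coefficients, noting that a nonzero map between the $L$-lines $\Cohomology{1}{\BK}{K}{V\otimes\chi}\to\Cohomology{1}{\f}{K_v}{V\otimes\chi}$ forces surjectivity on the divisible quotients. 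That shortcut is valid and arguably cleaner. Two small points of care: in the \eqref{corank-1} step you assert equality of $\Cohomology{1}{\BK}{K}{W\otimes\chi}_\div$ with the image of $\Cohomology{1}{\BK}{K}{V\otimes\chi}$ where only the (automatic) containment plus a finiteness statement is available a priori --- this is standard, but it is cleaner to read the corank directly off the conclusion $\Cohomology{1}{\BK}{K}{W\otimes\chi}\iso L/\O_L\oplus M\oplus M$ of \cref{main-dvr}, or to use \cref{redundancy}(1) for the local half as the paper does; and your verification of \eqref{H.4} for $T\otimes\chi$ via $\chi(\sigma)\chi(\tau\sigma\tau^{-1})=1$ is exactly the point the paper leaves implicit, so it is good that you made it explicit.
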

\begin{proof}
    From \eqref{chi-nontrivial}, we have also that $\kappa_1^\chi\neq 0.$ By \cref{main-dvr}, this implies both \eqref{rank-1}, as the Hodge--Tate weights are $(\lambda_1,\lambda_2)=(0,1)$, and the first part of \eqref{corank-1}. By \cref{redundancy}$(1),$ we conclude that \eqref{corank-1} holds.
    
    By the above, we have that $\Cohomology{1}{\f}{K_w}{W\otimes\chi}\iso L/\O_L$ for $w\mid p.$ So to conclude \eqref{sur}, it suffices to prove that the map $\Cohomology{1}{\BK}{K}{W\otimes\chi}_{\div}\to\Cohomology{1}{\f}{K_w}{W\otimes\chi}$ is nonzero for $w\mid p.$ As $\Cohomology{1}{\f}{K_{\conj{v}}}{W\otimes\chi}=\Cohomology{1}{\f}{K_v}{W\otimes\chi^{-1}},$ it suffices to prove this for $w=v,$ as \eqref{chi-nontrivial} is symmetric in $\chi$ and $\chi^{-1}.$
    
    Let $n$ be such that $\Ker{\chi}=G_{K_n},$ and denote $K'=K_n.$ and let $\v$ denote the place above $v$ in $K'$ given by the embedding $\overline{\Q}\hookrightarrow\overline{\Q_p}.$ Consider the diagram
    \begin{equation}
        \begin{tikzcd}
        \left(E(K')\otimes L(\chi)/\O_L\right)^{G_K}\arrow[hook]{r}\arrow{d}&(\Cohomology{1}{\BK}{K'}{W}\otimes\O_L(\chi))^{G_K}_\div\arrow{d}{\loc{\v}\otimes\mathrm{id}}\\
        \left(E(K'_\v)\otimes L(\chi)/\O_L\right)^{G_{K_w}}\arrow[hook]{r}&\left(\Cohomology{1}{\f}{K'_\v}{W}\otimes\O_L(\chi)\right)^{G_{K_w}}
        \end{tikzcd}
    \end{equation}
    where the horizontal maps are the Kummer maps.
    
    We recall from \Cref{finite-level-kol} that $\kappa_1^{\chi^{-1}}$ is the image of a point
    \begin{equation}
        P\defeq D_0^{\chi^{-1}} P[1]^{\chi^{-1}}\in\left( E(K')\otimes\O_L(\chi)\right)^{G_K}
    \end{equation}
    under the Kummer map. Since $\loc{v}(\kappa_1^{\chi^{-1}})$ is non-torsion, this means we can choose $m>>0$ such that $P\otimes p^{-m}\in\left(E(K')\otimes L(\chi)/\O_L\right)^{G_K}$ has nonzero image in the vertical map. This shows that the map $\loc{\v}\otimes\mathrm{id}$ is nontrivial.
    
    We claim that the natural restriction map induces a map
    \begin{equation}[restriction]
        \Cohomology{1}{\BK}{K}{W\otimes\chi}_\div\to(\Cohomology{1}{\BK}{K'}{W}\otimes\O_L(\chi))^{G_K}_\div.
    \end{equation}
    For $l\nmid p$ we have $\Cohomology{1}{\f}{K_l}{W\otimes\chi}=0,$ and so it suffices to check the local conditions above $p.$ Since $(V\otimes\chi)\otimes_{\Q_p}B_{\mathrm{cris}}=(V\otimes_{\Q_p} B_{\mathrm{cris}})\otimes
    L(\chi),$ the compatibility at the places above $p$ follow from the commutativity of the diagram below for any $\w$ in $K'$ above $w\mid p.$
    \begin{equation}
       \begin{tikzcd} \Cohomology{1}{}{K_w}{V\otimes\chi}\arrow{r}\arrow{d}&\Cohomology{1}{}{K_\w}{V}\otimes L(\chi)\arrow{d}\\
       \Cohomology{1}{}{K_w}{(V\otimes\chi)\otimes_{\Q_p}B_{\mathrm{cris}}}\arrow{r}&\Cohomology{1}{}{K_\w}{V\otimes_{\Q_p}B_{\mathrm{cris}}}\otimes L(\chi)
       \end{tikzcd}
    \end{equation}
    
    By inflation-restriction and \eqref{no-inv}, \eqref{restriction} has finite cokernel, and hence is surjective. So we have a commutative diagram
    \begin{equation}
        \begin{tikzcd}
        \Cohomology{1}{\BK}{K}{W\otimes\chi}_\div\arrow[two heads]{r}\arrow{d}{\loc{v}}&(\Cohomology{1}{\BK}{K'}{W}\otimes\O_L(\chi))^{G_K}_\div\arrow{d}{\loc{\v}\otimes\mathrm{id}}\\
        \Cohomology{1}{\f}{K_v}{W\otimes\chi}\arrow{r}&\left(\Cohomology{1}{\f}{K'_\v}{W}\otimes\O_L(\chi)\right)^{G_{K_w}}
        \end{tikzcd}
    \end{equation}
    Since $\loc{\v}\otimes\mathrm{id}$ is nonzero, we conclude that $\loc{v}$ is also nonzero.
\end{proof}

Now we compute the term $\frac{A(T,\omega,s-1)}{A(T,\omega,s)}$ for $\omega=\omega_E$ a N\'eron differential. We have a commutative diagram (\cite[Example 3.11]{Bloch-Kato})
\begin{equation}[Bloch-Kato-Kummer]
    \begin{tikzcd}
    \Cohomology{1}{\f}{F_k}{V}&\Q\otimes \hat{E}(F_k)\arrow{l}{\sim}[swap]{\kappa}\\
    \frac{\dr{F_k}{V}}{\Fil{0}{\dr{F_k}{V}}}\arrow{u}{\mathrm{exp}}[swap]{\sim}&\arrow{l}{\sim}\arrow{u}{\exp}[swap]{\sim}\mathrm{Tan}(E(F_k))\\
    \end{tikzcd}
\end{equation}
where $\kappa$ is the Kummer map and $\hat{E}$ is the formal group associated to $E.$ Moreover, it is such that $\log_{\omega_E}$ computes the formal group logarithm of $\Q\otimes\hat{E}(F_k).$

\begin{proposition}\label{A-comp}
    We have
    \begin{equation}
        A(T,\omega_E,k)=\frac{\abs{\O_{F_k}/\m_k}}{\abs{\tilde{E}(\O_{F_k}/\m_k)[p^\infty]}}
    \end{equation}
    where $\m_k$ is the maximal ideal of $F_k.$
\end{proposition}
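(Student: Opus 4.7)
The plan is to use the commutative diagram \eqref{Bloch-Kato-Kummer} to identify $\Cohomology{1}{\f}{F_k}{T}$ with the pro-$p$ completion $E(F_k)_p$ of $E(F_k)$ via the Kummer map, under which $\log_\omega$ corresponds to the formal group logarithm $\log_{\omega_E}$. In particular $\Cohomology{1}{\f}{F_k}{T}_\tor\iso E(F_k)[p^\infty]$, and, since the $p$-adic completion of the N\'eron exact sequence yields
\begin{equation}
0\to\hat{E}(\m_k)\to E(F_k)_p\to\tilde{E}(\F_k)[p^\infty]\to0
\end{equation}
with torsion cokernel, the image of $\log_\omega$ in $F_k$ coincides with $\log_{\omega_E}(\hat{E}(\m_k))$.

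The next step is to compute $\Index{\O_{F_k}}{\log_{\omega_E}(\hat{E}(\m_k))}$. I will use that for $v>e(F_k/\Q_p)/(p-1)$ the formal logarithm restricts to an isomorphism $\hat{E}(\m_k^v)\rightiso\m_k^v$, and that for $v$ sufficiently large the formal group torsion $\hat{E}(\m_k)[p^\infty]$ meets $\hat{E}(\m_k^v)$ trivially (torsion cannot accumulate at the identity in the formal group). A short argument then shows $\log_{\omega_E}^{-1}(\m_k^v)=\hat{E}(\m_k^v)\oplus\hat{E}(\m_k)[p^\infty]$, and combining this with the formal-group filtration $\hat{E}(\m_k^i)/\hat{E}(\m_k^{i+1})\iso\F_k$ and with $\Index{\O_{F_k}}{\m_k^v}=\abs{\F_k}^v$ produces
\begin{equation}
\Index{\O_{F_k}}{\log_{\omega_E}(\hat{E}(\m_k))}=\abs{\F_k}\cdot\abs{\hat{E}(\m_k)[p^\infty]}.
\end{equation}

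Finally, I will invoke the identity $\abs{E(F_k)[p^\infty]}=\abs{\hat{E}(\m_k)[p^\infty]}\cdot\abs{\tilde{E}(\F_k)[p^\infty]}$, which follows from the snake lemma applied to the N\'eron exact sequence together with the surjectivity of reduction on $p$-power torsion; the latter is trivial in the supersingular case (since $\tilde{E}(\F_k)[p^\infty]=0$) and in the ordinary case is a consequence of the connected-\'etale decomposition of $E[p^n]$. Combined with the previous two steps, the factors of $\abs{\hat{E}(\m_k)[p^\infty]}$ cancel, giving the claimed $\abs{\F_k}/\abs{\tilde{E}(\F_k)[p^\infty]}$. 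The main obstacle is the formal-group index computation, and in particular the bookkeeping of the torsion $\hat{E}(\m_k)[p^\infty]$, which must appear in $\Index{\O_{F_k}}{\log_{\omega_E}(\hat{E}(\m_k))}$ with exactly the right multiplicity so as to cancel precisely the same factor hidden inside $\abs{E(F_k)[p^\infty]}$.
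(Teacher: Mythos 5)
Your route is essentially the paper's: identify $\Cohomology{1}{\f}{F_k}{T}$ with the $p$-completion of $E(F_k)$ via \eqref{Bloch-Kato-Kummer}, reduce to a formal-group index, and feed in the reduction exact sequence. The gap is in the bookkeeping. Both of your bridging claims --- that $\log_\omega$ has the same image on all of $E(F_k)_p$ as on $\hat{E}(\m_k)$, and that $\abs{E(F_k)[p^\infty]}=\abs{\hat{E}(\m_k)[p^\infty]}\cdot\abs{\tilde{E}(\O_{F_k}/\m_k)[p^\infty]}$ --- are each equivalent to the surjectivity of reduction $E(F_k)[p^\infty]\to\tilde{E}(\O_{F_k}/\m_k)[p^\infty]$, and that surjectivity is \emph{not} a consequence of the connected--\'etale decomposition: the cokernel of $E[p^n](F_k)\to E[p^n]^{\mathrm{et}}(F_k)$ is measured by a class in $\Cohomology{1}{}{F_k}{E[p^n]^{\circ}}$, which need not vanish. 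Concretely, for an anomalous ordinary prime ($a_p\equiv 1\bmod p$) the extension $0\to E[p]^{\circ}\to E[p]\to E[p]^{\mathrm{et}}\to 0$ typically does not split over $\Q_p$, so $\tilde{E}(\F_p)[p]\neq 0$ while $E(\Q_p)[p]=0.$ Note also that ``torsion cokernel'' alone never forces equality of $\log$-images: $\Z\subset\tfrac{1}{2}\Z$ has torsion cokernel but distinct images in $\Q.$

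Two observations repair this. First, under the standing hypotheses of this subsection we have \eqref{not-anomalous}, and the argument of \cref{ordinary-lemma} gives $\Cohomology{0}{}{F_k}{W^-}=0,$ i.e.\ $\tilde{E}(\O_{F_k}/\m_k)[p^\infty]=0$ for all $k$; the surjectivity you need is then vacuous and your proof is valid in the context where the proposition is applied. Second, even unconditionally your final formula is correct, because the defect $D\defeq\tilde{E}(\O_{F_k}/\m_k)[p^\infty]$ modulo the image of $E(F_k)[p^\infty]$ under reduction divides your numerator and your denominator by the same factor $\abs{D}$, so it cancels --- but your write-up asserts $D=0$ rather than exploiting this cancellation. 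The paper's proof sidesteps the issue entirely: it computes with $\hat{E}(\m_k^r)$ for $r$ large, where $\log_{\omega_E}$ is an isomorphism onto $\m_k^r$ and there is no torsion, and then telescopes $\Index{\hat{E}(\m_k)}{\hat{E}(\m_k^r)}=\Index{\m_k}{\m_k^r}$ through the graded pieces; no lifting of torsion is ever needed. Your computation of $\Index{\O_{F_k}}{\log_{\omega_E}(\hat{E}(\m_k))}=\abs{\O_{F_k}/\m_k}\cdot\abs{\hat{E}(\m_k)[p^\infty]}$ is itself correct.
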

\begin{proof}
We have $E(F_k)_{/\tor}\rightiso\Cohomology{1}{\f}{F_k}{T}_{/\tor}$ by the Kummer map, and so, by \eqref{Bloch-Kato-Kummer}, we have
\begin{equation}
    A(T,\omega_E,k)=\frac{\Index{\O_{F_k}}{\log(\hat{E}(\m_k^r))}}{\Index{E(F_k)_{/\tor}}{\hat{E}(\m_k^r)}}\cdot\frac{1}{\abs{E(F_k)_\tor}}=\frac{\Index{\O_{F_k}}{\log(\hat{E}(\m_k^r))}}{\Index{E(F_k)}{\hat{E}(\m_k^r)}}
\end{equation}
where $\m_k$ denotes the maximal ideal of $\O_{F_k}.$ For $r$ sufficiently large, this is
\begin{equation}
    A(T,\omega_E,k)=\frac{\Index{\O_{F_k}}{\m_k^r}}{\Index{E(F_k)}{\hat{E}(\m_k^r)}}=\frac{\Index{\O_{F_k}}{\m_k^r}}{\Index{E(F_k)}{\hat{E}(\m_k)}\cdot \Index{\m_k}{\m_k^r}}=\frac{\Index{\O_{F_k}}{\m_k}}{\Index{E(F_k)}{\hat{E}(\m_k)}}
\end{equation}
where the second equality follows from the exact sequences $0\to \hat{E}(\m_k^{i+1})\to\hat{E}(\m_k^i)\to \m_k^{i+1}/\m_k^i\to 0.$ Because of the exact sequence $0\to \hat{E}(\m_k)\to E(F_k)\to \Tilde{E}(\kappa(F_k))\to 0,$ we get
\begin{equation}
    A(T,\omega_E,k)=\frac{\abs{\O_{F_k}/\m_k}}{\abs{\tilde{E}(\O_{F_k}/\m_k)[p^\infty]}}.\qedhere
\end{equation}
\end{proof}

\begin{theorem}\label{formula-index-2}
    Let $\chi$ be such that $s>f_0$ and such that \eqref{chi-nontrivial} holds. Then
    \begin{equation}
        f_\ac(\chi^{-1}(\gamma)-1)\equal\left(\frac{\log_{\omega_E,v}\kappa_1^\chi}{\mathfrak{g}\left(\chi\lvert_{\Gal{F_s}{F_{f_0}}}\right)}\right)^2
        \iff\frac{\Index{\Cohomology{1}{\BK}{K}{T\otimes\chi}}{\O_L\cdot \kappa_1^\chi}^2}{\prod_{w\mid N^+}c_w(W\otimes\chi)}= \abs{\Sha_{\mathrm{BK}}(W\otimes\chi/K)}.
    \end{equation}
\end{theorem}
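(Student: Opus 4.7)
The plan is to specialize Corollary \ref{formula-index} to the triple $(T_pE,\omega_E,\kappa_1^\chi)$ and then show that, under the additional hypothesis $s>f_0$ (together with the standing arithmetic hypotheses on $E$), the extra multiplicative factor on the right-hand side of \cref{formula-index} collapses to $1$. The equivalence sought in \cref{formula-index-2} is then exactly the specialization.

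First, I would check that \cref{formula-index} applies. The paragraph opening Section 4.4 already catalogues how the hypotheses \eqref{cris}, \eqref{sst}, \eqref{tau-dual}, \eqref{irred}, \eqref{no-inv}, \eqref{geom}, \eqref{pure}, \eqref{2-dim}, \eqref{HT}, \eqref{e=f}, \eqref{e=f-chi}, \eqref{ordinary}, \eqref{not-anomalous} hold for $V=V_pE$ under \cref{assumption-2}(1); and the proposition preceding the theorem shows that \eqref{chi-nontrivial} supplies \eqref{corank-1}, \eqref{sur}, \eqref{rank-1}. Since $\loc{v}(\kappa_1^\chi)$ is non-torsion by \eqref{chi-nontrivial}, we may use $z=\kappa_1^\chi$.

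Next, I would evaluate
\begin{equation*}
\frac{A(T,\omega_E,s-1)}{A(T,\omega_E,s)}\cdot\frac{|\Cohomology{0}{}{K_{\conj{v}}}{W\otimes\chi}|}{|\Cohomology{0}{}{K_v}{W\otimes\chi}|}
\end{equation*}
and show it is $1$. By \cref{A-comp} the first factor equals
\begin{equation*}
\frac{|\O_{F_{s-1}}/\m_{s-1}|\cdot|\tilde{E}(\O_{F_s}/\m_s)[p^\infty]|}{|\O_{F_s}/\m_s|\cdot|\tilde{E}(\O_{F_{s-1}}/\m_{s-1})[p^\infty]|}.
\end{equation*}
For $s>f_0$ the extension $F_s/F_{s-1}$ lies above the maximal unramified layer of the $\Z_p$-tower $F_\infty/\Q_p$, hence is totally ramified of degree $p$; so the residue fields coincide, and together with \eqref{not-anomalous} this forces the reductions $\tilde{E}(\O_{F_k}/\m_k)[p^\infty]$ to be trivial for $k\ge f_0$. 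The first ratio is therefore $1$.

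For the $H^0$-ratio, since $E$ is defined over $\Q$, complex conjugation $\tau$ interchanges $v$ and $\conj{v}$ and carries the anticyclotomic $\chi$ to $\chi^{-1}$, giving $|\Cohomology{0}{}{K_{\conj{v}}}{W\otimes\chi}|=|\Cohomology{0}{}{K_v}{W\otimes\chi^{-1}}|$. Using the ordinary filtration $0\to W^+\to W\to W^-\to 0$, the hypothesis \eqref{not-anomalous} kills $(W^-\otimes\chi^{\pm 1})^{G_{K_v}}$, while the hypothesis $s>f_0\ge 1$ guarantees that $\chi^{\pm 1}$ is ramified on $G_{K_v}$, so that the inertial action on $W^+\otimes\chi^{\pm 1}$ (which is the cyclotomic character twisted by the nontrivial wild character $\chi^{\pm 1}$) has no fixed vectors. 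Thus both numerator and denominator are trivial, and the ratio is $1$. Plugging the two simplifications into \cref{formula-index} produces precisely \cref{formula-index-2}.

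The main obstacle will be the $H^0$ computation: one has to argue carefully that twisting the ordinary line $W^+$ by a nontrivial ramified anticyclotomic character cannot create $G_{K_v}$-invariants, which requires pinning down that $\chi|_{G_{K_v}}$ is wildly ramified of conductor $p^s$ with $s>f_0$, and in particular not equal to the inverse of the cyclotomic character on inertia. The rest is bookkeeping between Proposition \ref{A-comp} and the definition of $f_0$ as the largest integer such that $F_{f_0}/\Q_p$ is unramified inside $F_\infty$.
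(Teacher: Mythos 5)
Your overall strategy coincides with the paper's: specialize \cref{formula-index} to $z=\kappa_1^\chi$, $\omega=\omega_E$, and show the extra factor $\frac{A(T,\omega_E,s-1)}{A(T,\omega_E,s)}\cdot\frac{\abs{\Cohomology{0}{}{K_{\conj{v}}}{W\otimes\chi}}}{\abs{\Cohomology{0}{}{K_v}{W\otimes\chi}}}$ equals $1$. For the $A$-ratio you and the paper agree on the essential point ($s>f_0$ forces $f(F_s/F_{s-1})=1$, so the residue fields in \cref{A-comp} coincide and the two values are literally equal); your additional claim that $\tilde{E}(\O_{F_k}/\m_k)[p^\infty]$ is trivial is true under \eqref{not-anomalous} but unnecessary. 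For the $H^0$-ratio you diverge: you prove both groups vanish, whereas the paper only proves they have equal cardinality, using $V\iso V^\tau$ to identify $\Cohomology{0}{}{K_{\conj{v}}}{W\otimes\chi}$ with $\Cohomology{0}{}{K_v}{W^\tau\otimes\chi^{-1}}$ and then invoking \cref{twist-calculation}. The paper's route is lighter on hypotheses (it needs nothing about ramification of $\chi$ or about $W^+$); yours yields the stronger conclusion $\Cohomology{0}{}{K_v}{W\otimes\chi}=0$, which is also what makes the identity $\abs{\delta_v(\chi)}\cdot\abs{\Cohomology{0}{}{K_v}{W\otimes\chi}}$ in \cref{first-special-value-formula} cleanest.

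One step of your vanishing argument is mis-justified, though the conclusion is correct. For the divisible module $W^+\otimes\chi\iso (L/\O_L)(\psi)$ with $\psi=\epsilon\lambda^{-1}\chi$, nontriviality of $\psi$ on inertia does \emph{not} imply $\Cohomology{0}{}{K_v}{W^+\otimes\chi}=0$: one has $\Cohomology{0}{}{K_v}{(L/\O_L)(\psi)}\neq0$ exactly when $\psi\equiv1\bmod\m_L$. So the relevant point is not that $\chi$ is wildly ramified of conductor $p^s$ with $s>f_0$ (indeed $\chi\equiv1\bmod\m_L$ contributes nothing residually), but that $\overline{\psi}|_{I_{\Q_p}}=\overline{\epsilon}|_{I_{\Q_p}}$ is nontrivial because $p\ge5$ — this is the role of \eqref{HT}, and it works even for $\chi$ trivial. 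With that correction your argument closes, and the rest of the bookkeeping matches the paper.
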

\begin{proof}
    By \cref{A-comp}, we have $A(T,\omega_E,s)=A(T,\omega_E,s-1)$ since $s>f_0,$ and hence $f(F_s/F_{s-1})=1.$
    
    Since $V$ satisfies $V\iso V^\tau,$ we have
    \begin{equation}
        \Cohomology{0}{}{K_{\conj{v}}}{W\otimes\chi}\iso\Cohomology{0}{}{K_v}{W^\tau\otimes\chi^{-1}}
    \end{equation}
    and by \cref{twist-calculation} this has the same size as $\Cohomology{0}{}{K_v}{W\otimes\chi}.$ Hence
    \begin{equation}
        \abs{\Cohomology{0}{}{K_v}{W\otimes\chi}}=\abs{\Cohomology{0}{}{K_{\conj{v}}}{W\otimes\chi}},
    \end{equation}
    and now the claim follows from \cref{formula-index} for $z=\kappa_1^\chi$ and $\omega=\omega_E.$
\end{proof}

\section{The BDP Main Conjecture}\label{main-section}
In this section, we continue to assume the setting in the introduction. This means that we consider a quadratic imaginary $K$ with $D_K<-4,$ a prime $p\ge5$ with $p\nmid D_K$ and an elliptic curve $E/\Q$ with $(N_E,D_K)=1$ with good ordinary reduction at $p.$ We assume \eqref{res-surj} and the generalized Heegner hypothesis \eqref{Heeg}. We let $T=T_pE$ be the Tate module of $E.$

For this entire section, we will also assume that $p=v\conj{v}$ splits in $K$ with $v$ the place determined by the fixed embedding $\overline{\Q}\hookrightarrow\overline{\Q_p}.$

\begin{specialtheorem}{BDP Main Conjecture}\label{BDP-main}
    Let $\L_v\in\Lambda^{\ur}\defeq\Z^{\ur}\hat{\otimes}\Lambda$ be the BDP $p$-adic $L$-function of \cite[Theorem 2.7]{Castella-Wan}. Then $X_\ac(\A)$ (see \Cref{section4.1}) is $\Lambda$-torsion, and
    \begin{equation}
        \Char(X_\ac(\A))\Lambda^{\ur}=(\L_v)^2.
    \end{equation}
\end{specialtheorem}

\subsection{BDP formulas}
To prove \cref{Theorem-B}, we will use the following explicit reciprocity law of Heegner points.

\begin{theorem}[{\cite[Theorem 4.9]{Castella-Hsieh}}]\label{BDP-form}
    Let $\chi$ be a finite order anticyclotomic character with $\Ker{\chi}=G_{K_n}$ with $n>n_0.$ Then if $k=k(n)=n-n_0+1,$ we have
    \begin{equation}
        \L_v(\chi^{-1})=\mathfrak{g}(\xi^{-1})\xi(p^k)p^{-k}\sum_{\sigma\in\Gal{K[p^k]}{K}}\chi(\sigma^{-1})\cdot\sigma\left(\log_{\hat{E}}(P[p^k])\right)\equal \frac{\log_{\omega_E,v}(\kappa_1^\chi)}{p^{k/2}}
    \end{equation}
    where $\xi=\chi_v^{-1}$ is the idelic $v$-component of $\chi^{-1}$ seen as a Hecke character, and the Gauss sum $\mathfrak{g}(\xi^{-1})\xi(p^k)$ is the Hecke character Gauss sum.
\end{theorem}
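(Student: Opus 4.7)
The first equality in the displayed formula is the content of Castella--Hsieh's Theorem 4.9, an explicit reciprocity law in the spirit of Perrin-Riou. The BDP $p$-adic $L$-function $\L_v$ arises as the image of a norm-compatible Iwasawa-theoretic Heegner class under a big logarithm map on the $v$-component of the Galois representation, and specializing at $\chi^{-1}$ outside the range of interpolation produces the Bloch--Kato logarithm of a finite sum of conjugates of $P[p^k]$, with Gauss-sum factors encoding the translation between the Perrin-Riou normalization and the Bloch--Kato one. I would take this as a black box and devote the argument to verifying the $\equal$ part of the statement, which rewrites that sum in terms of $\kappa_1^\chi$.

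For this I would first unwind $\kappa_1^\chi$ from the constructions in \Cref{finite-level-kol}: it is the unique lift (by \eqref{res-surj}) of the Kummer image of $D_0^\chi P[1]^\chi,$ and combining the definitions of $D_0^\chi$ and $P[m]^\chi$ telescopes the double sum into
\[ D_0^\chi P[1]^\chi = \sum_{\sigma\in\Gal{K[p^k]}{K}}\sigma P[p^k]\otimes\chi(\sigma^{-1}) \]
as an element of $(E(K_n[1])\otimes\O_L(\chi^{-1}))^{\chi|_{G_K}}.$ Localizing at $v,$ the commutative diagram \eqref{Bloch-Kato-Kummer} identifies $\log_{\omega_E}$ on the Kummer image with the formal group logarithm $\log_{\hat E},$ while inflation-restriction (together with $\Cohomology{0}{}{(K_n)_v}{T}=0$ coming from \eqref{no-inv}) provides a Galois-equivariant isomorphism $\Cohomology{1}{}{K_v}{T\otimes\chi}\iso(\Cohomology{1}{}{(K_n)_v}{T}\otimes\chi)^{G_{K_v}}.$ Both identifications are compatible with the $\sigma$-sum in the natural way, yielding
\[ \log_{\omega_E, v}(\kappa_1^\chi) \equal \sum_{\sigma\in\Gal{K[p^k]}{K}}\chi(\sigma^{-1})\cdot\sigma\log_{\hat E}(P[p^k]). \]

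To match the $p$-adic valuation with the $p^{-k/2}$ denominator on the right, I would invoke the standard fact that the Hecke Gauss sum $\mathfrak{g}(\xi^{-1})$ of a character of $\mathfrak{p}$-conductor $p^k$ has $p$-adic valuation $k/2,$ while $\xi(p^k)$ is a $p$-adic unit; thus $\mathfrak{g}(\xi^{-1})\xi(p^k)p^{-k}$ has valuation $-k/2,$ which is exactly the valuation of $p^{-k/2}.$ Combined with the previous display, the ratio of the middle and right expressions is a $p$-adic unit, which is the desired $\equal.$ The main obstacle is the first equality itself -- the Castella--Hsieh reciprocity law -- which requires the careful setup of a Perrin-Riou big logarithm map on the anticyclotomic Heegner Iwasawa class and the tracking of normalization factors between Hecke-character and Galois Gauss sums; once that is granted, the identification with $\kappa_1^\chi$ reduces to routine bookkeeping about the twisted Galois cohomology.
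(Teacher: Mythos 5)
Your proposal matches the paper's proof: both take Castella--Hsieh's Theorem 4.9 (specialized at $r=1$, $j=0$) as the source of the first equality and reduce the $\equal$ to the construction of $\kappa_1^\chi$ as the (localized, twisted) Kummer image of $D_0^\chi P[1]^\chi=\sum_{\sigma}\sigma P[p^k]\otimes\chi(\sigma^{-1})$ together with the valuation $\mathfrak{g}(\xi^{-1})\xi(p^k)\equal p^{k/2}.$ The only point you omit is that Castella--Hsieh work under the classical Heegner hypothesis $N^-=1,$ so one must additionally note, as the paper does following Castella--Wan, that their proof of the reciprocity law carries over to the generalized hypothesis \eqref{Heeg}.
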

\begin{proof}
Even though \cite{Castella-Hsieh} consider a more restrictive Heegner hypothesis, the proof of \cite[Theorem 4.9]{Castella-Hsieh} works for the generalized Heegner hypothesis \eqref{Heeg}, as noted in the proof of \cite[Theorem 5.6]{Castella-Wan}.

The desired equalities follow by taking $r=1$ and $j=0$ in \cite[Theorem 4.9]{Castella-Hsieh}, and using that $\mathfrak{g}(\xi^{-1})\xi(p^k)\equal p^{k/2}.$
\end{proof}
\begin{corollary}\label{BDP-formula}
    Let $\chi$ be an anticyclotomic character of finite order and local (Galois) conductor $p^s.$ Suppose that $s>f_0.$ Then we have
    \begin{equation}
        \L_v(\chi^{-1})\equal \frac{\log_{\omega_E,v}(\kappa_1^\chi)}{\mathfrak{g}(\chi\lvert_{\Gal{F_s}{F_{f_0}}})}
    \end{equation}
\end{corollary}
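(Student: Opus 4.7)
The plan is to combine the explicit reciprocity law of \cref{BDP-form} with a Gauss sum computation identifying $p^{k/2}$ and $\mathfrak{g}(\chi\lvert_{\Gal{F_s}{F_{f_0}}})$ up to a $p$-adic unit. The hypothesis $s>f_0$ ensures that $\chi\lvert_{G_{K_v}}$ is nontrivially ramified, which, since $F_{f_0}$ is the maximal unramified subextension of $F_\infty/\Q_p$, is equivalent to $n>n_0$. Hence \cref{BDP-form} applies and yields
\[
    \L_v(\chi^{-1})\equal\frac{\log_{\omega_E,v}(\kappa_1^\chi)}{p^{k/2}},\qquad k=n-n_0+1.
\]
So it suffices to prove $\mathfrak{g}(\chi\lvert_{\Gal{F_s}{F_{f_0}}})\equal p^{k/2}$.

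For the identification of exponents, I would first check that $s-f_0+1=k$. Since $p$ splits in $K$ and the anticyclotomic $\Z_p$-extension is ramified at $v$ with maximal unramified subextension $K_{n_0}$, we have $f_0=n_0$. Moreover, for $p$ split the decomposition group at $v$ in $\Gal{K_\infty}{K}$ equals the full group $\Gal{K_\infty}{K}$, so $\chi\lvert_{G_{K_v}}$ has the same order as $\chi$ itself, forcing the local Galois conductor to satisfy $s=n$. Combining, $s-f_0+1=n-n_0+1=k$.

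Next, since $F_s/F_{f_0}$ is totally ramified, $\chi\lvert_{\Gal{F_s}{F_{f_0}}}$ is literally the restriction of $\chi\lvert_{G_{K_v}}$ to the inertia subgroup at $v$, and hence is a faithful character of the cyclic $p$-group $\Gal{F_s}{F_{f_0}}$ of order $p^{s-f_0}$. Under the lift described in the paper to a Dirichlet character modulo $p^{s-f_0+1}=p^k$, faithfulness on the pro-$p$ part implies primitivity modulo $p^k$. Then the classical identity $\mathfrak{g}\cdot\overline{\mathfrak{g}}=\pm p^k$ together with the fact that $\mathfrak{g}$ and $\overline{\mathfrak{g}}$ are Galois conjugates, hence have the same $p$-adic valuation, gives $\mathrm{ord}_p(\mathfrak{g})=k/2$, i.e., $\mathfrak{g}\equal p^{k/2}$, as required.

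The only nontrivial points are the identifications $s=n$ and $f_0=n_0$, which rely crucially on $p$ being split in $K$ so that the inertia/decomposition structure in the anticyclotomic tower matches up cleanly; the remainder is a direct appeal to \cref{BDP-form} plus elementary Gauss sum arithmetic.
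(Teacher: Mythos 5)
Your overall route is the same as the paper's: invoke \cref{BDP-form} to get $\L_v(\chi^{-1})\equal\log_{\omega_E,v}(\kappa_1^\chi)/p^{k/2}$ with $k=n-n_0+1$, and then check that $\mathfrak{g}(\chi\lvert_{\Gal{F_s}{F_{f_0}}})\equal p^{k/2}$. The Gauss sum half is fine: since $s>f_0$ forces $k\ge2$, faithfulness of $\chi\lvert_{\Gal{F_s}{F_{f_0}}}$ does give a primitive Dirichlet character mod $p^{s-f_0+1}$, and the valuation argument via $\mathfrak{g}\overline{\mathfrak{g}}=\pm p^{s-f_0+1}$ (legitimate here because $p$ is totally ramified in $\Q(\zeta_{p^{s-f_0+1}})$, so Galois conjugates have equal $p$-adic valuation) yields $\mathrm{ord}_p(\mathfrak{g})=(s-f_0+1)/2$.

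The weak point is your justification of $s-f_0+1=k$. You assert that the decomposition group $D_v\subseteq\Gal{K_\infty}{K}$ is the full group, and deduce $s=n$ and $f_0=n_0$ separately. That assertion is not justified and is false in general: $D_v$ contains the inertia group $I_v=\Gal{K_\infty}{K_{n_0}}$ with finite index, but the quotient $\Gal{K_{n_0}}{K}/\langle\mathrm{Frob}_v\rangle$ need not vanish (e.g.\ if $\p_v$ is principal while $n_0>0$, then $v$ splits completely in $K_{n_0}$ and $D_v$ is proper). If $[\Gal{K_\infty}{K}:D_v]=p^m$, then in fact $s=n-m$ and $f_0=n_0-m$, so neither of your two identities holds individually. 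What rescues the argument is that only the difference enters: $\Gal{F_s}{F_{f_0}}$ is the inertia subgroup of $\Gal{F_s}{\Q_p}$, i.e.\ the image of $I_v=\Gal{K_n}{K_{n_0}}$ under $\chi$, whence $p^{s-f_0}=p^{n-n_0}$ unconditionally — this is exactly the paper's one-line computation ("both are the exponent of $p$ in the size of the inertia group of $\Gal{K_n}{K}$"). So the statement you need is true, but you should replace the decomposition-group claim by this comparison of inertia groups.
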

\begin{proof}
By the definition of $n$ and $s$ and since $s>f_0,$ we have $s-f_0=n-n_0,$ since both are the exponent of $p$ in the size of the inertia group of $\Gal{K_n}{K}.$ Now the claim follows from the above theorem since
\begin{equation}
    \mathfrak{g}(\chi\lvert_{\Gal{F_s}{F_{f_0}}})\equal p^{(s+1-f_0)/2}=p^{(n+1-n_0)/2}.\qedhere
\end{equation}
\end{proof}

\subsection{Theorem B}
Recall that $X\defeq\Hom_{\Z_p}(\Cohomology{1}{\FLambda}{K}{\A},\Q_p/\Z_p)$ is as in \cref{Howard-main} (see \Cref{section2.4}). To prove \cref{Theorem-B}, we will exploit the equivalence between \cref{Howard-main} and \cref{BDP-main} as in \cite[Theorem 4.1]{BCK}.

\begin{proof}[Proof of \cref{Theorem-B}]
For the case that $p\nmid a_p(a_p-1),$ let $z_\chi\defeq\kappa^\chi_1$ and choose $\chi$ such that $n>n_0$ and such that \eqref{chi-nontrivial} holds. These holds true for infinitely many $\chi$ because of \cite[Theorem 1.10]{Cornut-Vatsal}. For the case that $E/K$ has analytic rank $1,$ we let $y_K\defeq\Tr_{K[1]/K}P[1],$ and will also retain the notation above with $\chi$ being the trivial character, so that we can treat both cases at the same time.

Since we are assuming \cref{Howard-main}, by \cite[Theorem 4.1]{BCK} we have that the \cref{BDP-main} also holds. Combined with \cref{BDP-formula} in the first case and with \cite[Theorem 5.2]{BCK} in the second case, it gives us
\begin{equation}
    f_\ac(\chi^{-1}(\gamma)-1)\equal\left(\frac{\log_{\omega_E,v} z_{\chi}}{\mathfrak{g}\left(\chi\lvert_{\Gal{F_s}{F_{f_0}}}\right)}\right)^2\quad\text{and}\quad f_{\ac}(0)\equal \left(\frac{1-a_p+p}{p}\right)\cdot(\log_Ey_K)
\end{equation}
respectively.

Together with \cref{formula-index-2} in the first case and with \cite[Theorem 5.1]{BCK} in the second case, this implies that
\begin{equation}[]
    \Index{\Cohomology{1}{\BK}{K}{T\otimes\chi}}{z_\chi}^2\equal \abs{\Sha_{\mathrm{BK}}(W\otimes\chi/K)}\cdot\prod_{w\mid N^+}c_w(W\otimes\chi).
\end{equation}

By \cref{main-dvr}$(3),$ this equality is equivalent to
\begin{equation}[eq_d(kappa)]
    p^{2\cdot d\left(\kappa^{\chi}\right)}\equal\prod_{w\mid N^+}c_w(W\otimes\chi).
\end{equation}

By \cref{d=0iffdel=0} and \cref{Tam-cong}\footnote{Note that  $c_w(W)=\lvert c_w(E/K)\rvert_p^{-1}$ by \cite[Lemma 9.1]{Skinner-Zhang}.}, such relation implies that
\begin{equation}
    \kappa^{\chi}\text{ is primitive}\iff \eqref{no-Tam}.
\end{equation}

In the second case, the claim follows since $\kappa^{\chi}=\kappa.$ In the first case, by \cref{primitivity-equivalence}, the left hand side is equivalent to $\kappa$ being primitive, so the conclusion of \cref{Theorem-B} follows.
\end{proof}

\begin{appendix}\eqnumbersubsection
    \section{Computations}
    \subsection{Computations for \texorpdfstring{\cref{first-special-value-formula}}{\ref*{first-special-value-formula}}}\label{appendix-section}
Let $z=\loc{v}c.$ We want to prove \eqref{eq_computation-twist}, that is,
\begin{equation}[eq_appendix-goal]
    \Index{\Cohomology{1}{\f}{\Q_p}{T\otimes\chi}_{/\tor}}{\O_L\cdot z}\cdot\abs{\Cohomology{1}{\f}{\Q_p}{T\otimes\chi}_\tor}\isequal\frac{\abs{\O_{F_s}\otimes_{\Z_p[G_s]}\O_L(\chi^{-1})/\log_\omega z}}{\abs{\O_L/p^s}}\cdot\frac{A(T,\omega,s-1)}{A(T,\omega,s)}.
\end{equation}
We let $G_s\defeq\Gal{F_s}{\Q_p}$ and we regard $(\O_{F_s}\otimes_{\Z_p}\O_L(\chi))^{G_{\Q_p}}\subseteq \O_{F_s}\otimes_{\Z_p[G_s]}\O_L(\chi^{-1})$ by the natural injection.

\begin{lemma}\label{twist-calculation}
    Let $M$ be a $\Z_p\grpring{G_{\Q_p}}$-module. Fix a $p^s$-root of unity $\zeta.$ Then we have an isomorphism
    \begin{equation}
        S_{\chi}\colon M^{G_s}[\Tr_{F_s/F_{s-1}}]\rightiso\left(M\otimes_{\Z_p} \Z_p(\chi)\right)^{G_s}
    \end{equation}
    given by $S_{\chi}(c_0)=\sum_{i=0}^{\phi(p^s)-1}\gamma_i(c_0)\otimes\zeta^i$ where $\gamma_i$ are certain elements of $\Z[G_s].$
\end{lemma}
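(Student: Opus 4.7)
The plan is to prove the lemma by explicit construction, using the structure of $G_s$ as a cyclic group and the minimal polynomial of $\zeta$ over $\Z_p$. Write $\sigma_0$ for a topological generator of $\Gal{F_\infty}{\Q_p}$, so that $G_s = \langle \sigma_0 \rangle$ is cyclic of order $p^s$ (since $F_\infty/\Q_p$ is a $\Z_p$-extension). Since $\chi\lvert_{G_{K_v}}$ has exact (Galois) conductor $p^s$, the element $\zeta = \chi(\sigma_0)$ is a primitive $p^s$-th root of unity, and $\{1,\zeta,\dots,\zeta^{\phi(p^s)-1}\}$ is a $\Z_p$-basis of $\O_L = \Z_p[\chi]$, subject to the relation $\zeta^{\phi(p^s)} = -\sum_{l=0}^{p-2}\zeta^{lp^{s-1}}$ coming from $\Phi_{p^s}(\zeta)=0$. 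Throughout, I interpret $M^{G_s}$ as $M^{G_{F_s}}$ with its induced $G_s$-action, so that lifting $\sigma_0$ to $G_{\Q_p}$ gives a well-defined operator on it (and for the $M=\O_{F_s}$ of interest this is just $M$ itself).

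The first step is to derive the $\gamma_i$ by imposing $\sigma_0\cdot S_\chi(c_0) = S_\chi(c_0)$. The action on $M\otimes\Z_p(\chi)$ sends $\sum_i \gamma_i(c_0)\otimes \zeta^i$ to $\sum_i \sigma_0\gamma_i(c_0)\otimes \zeta^{i+1}$, and rewriting $\zeta^{\phi(p^s)}$ in the basis yields a recurrence. Normalizing $\gamma_0 = 1$, one finds $\gamma_j = \sigma_0\gamma_{j-1}$ for $j \in \{1,\dots,\phi(p^s)-1\}$ not divisible by $p^{s-1}$, while at $j = lp^{s-1}$ with $1\le l\le p-2$ an extra correction $-\sigma_0\gamma_{\phi(p^s)-1}$ appears. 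Solving the recurrence gives the explicit formula
\begin{equation}
    \gamma_{lp^{s-1}+k} \;=\; \sigma_0^{k}\Bigl(\sum_{m=0}^{l}\sigma_0^{mp^{s-1}}\Bigr)\;\in\;\Z[G_s]
\end{equation}
for $0\le l\le p-2$ and $0\le k<p^{s-1}$.

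The second step is to observe that the only remaining equation—the coefficient of $\zeta^0$ on the wrap-around side—reads $\sigma_0\gamma_{\phi(p^s)-1}(c_0) = -c_0$, which unwinds to $\sum_{m=0}^{p-1}\sigma_0^{mp^{s-1}}(c_0)=0$. Since $\sum_{m=0}^{p-1}\sigma_0^{mp^{s-1}}$ is precisely the element $\Tr_{F_s/F_{s-1}}\in\Z[G_s]$, this is exactly the hypothesis $c_0\in M^{G_s}[\Tr_{F_s/F_{s-1}}]$. So $S_\chi$ is well defined on the domain stated and lands in $(M\otimes \Z_p(\chi))^{G_s}$.

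Finally, bijectivity: injectivity is immediate because $\gamma_0=1$ and the $\zeta^i$ are a $\Z_p$-basis, so $S_\chi(c_0)=0$ forces $c_0=0$ as the $\zeta^0$-coefficient. For surjectivity, given $x=\sum_i m_i\otimes \zeta^i\in(M\otimes \Z_p(\chi))^{G_s}$, the $G_s$-invariance produces exactly the same recurrence as above with $c_0 := m_0$, forcing $m_i = \gamma_i(c_0)$, and the wrap-around consistency condition becomes $\Tr_{F_s/F_{s-1}}(c_0)=0$; that $\sigma_0^i(c_0)$ is unambiguously defined independent of the lift of $\sigma_0^i$ to $G_{\Q_p}$ forces $c_0\in M^{G_{F_s}}$, so indeed $c_0\in M^{G_s}[\Tr_{F_s/F_{s-1}}]$ and $S_\chi(c_0)=x$. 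The main technical obstacle is the bookkeeping in the first step—tracking how expanding $\zeta^{\phi(p^s)}$ in the cyclotomic basis introduces the corrections at the ``edges'' $j=lp^{s-1}$—but once this is laid out carefully, the interpretation of the consistency condition as a trace relation is transparent and yields the lemma.
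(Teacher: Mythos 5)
Your proposal is correct and follows essentially the same route as the paper: expand an invariant element of $M\otimes_{\Z_p}\Z_p(\chi)$ in the $\Z_p$-basis $\{\zeta^i\}$, impose invariance under a generator of $G_s$ to get a recurrence determining all coefficients from $c_0$, and identify the wrap-around consistency condition with $\Tr_{F_s/F_{s-1}}(c_0)=0$ via $\Phi_{p^s}(\zeta)=0$. The only difference is that you solve the recurrence to give the $\gamma_i$ explicitly, which the paper leaves implicit.
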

\begin{proof}
Let $\gamma$ be a generator of $G_s$ such that $\chi(\gamma)=\zeta.$ Then every element of $M\otimes_{\Z_p} \Z_p(\chi)$ can be written uniquely as c=$\sum_{i=0}^{\phi(p^s)-1}c_i\otimes\zeta^i.$ For it to be $G_{\Q_p}$-invariant, we need that each $c_i$ is fixed by $G_{F_s}$ and that the expression is fixed by $\gamma$, that is,
\begin{equation}
    \sum_{i=0}^{\phi(p^s)-1}c_i\otimes\zeta^i=\sum_{i=0}^{\phi(p^s)-1}\gamma c_i\otimes\zeta^{i+1}.
\end{equation}
As $1+\zeta^{p^{s-1}}+\cdots+\zeta^{p^{s-1}(p-1)}=0,$ this is the same as
\begin{equation}
    c_{i+1}= \gamma c_i-\left\{\begin{array}{ll} \gamma c_{\phi(p^s)-1}&\text{if }p^{s-1}\mid i+1,\\0&\text{otherwise,}\end{array}\right.\quad\text{ for }0\le i<\phi(p^s)-1,\quad\text{and}\quad c_0=-\gamma c_{\phi(p^s)-1}.
\end{equation}
In particular, $c$ is determined uniquely by $c_0,$ and the possible $c_0$ are such that
\begin{equation}
    \Tr_{F_s/F_{s-1}}c=\left(1+\gamma^{p^{s-1}}+\cdots+\gamma^{p^{s-1}(p-1)}\right)c=0.
\end{equation}
So the map $c_0\mapsto c$ is the isomorphism we want.
\end{proof}

\begin{lemma}\label{ordinary-lemma}
    We have that $\Cohomology{1}{}{F_k}{V^+}\rightiso\Cohomology{1}{\f}{F_k}{V}$ for all $k\ge0.$ We also have that $\Cohomology{1}{}{F_k}{T^+}\rightiso\Cohomology{1}{\f}{F_k}{T}$ for all $k\ge0$ and that $\Cohomology{0}{}{\Q_p}{W^-\otimes\chi}=0.$
\end{lemma}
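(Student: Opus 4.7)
My plan is to prove the three assertions in sequence, using the ordinary decomposition together with \eqref{not-anomalous}, Tate local duality, and the totally ramified nature of $F_k/\Q_p$.

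The key input is the vanishing of several $H^0$'s for the quotient representations. Since $V$ is ordinary at $v$, $V^-$ is an unramified $G_{K_v}$-representation on which arithmetic Frobenius acts by the unit root $\alpha$ of $x^2-a_px+p$. Because $F_k/\Q_p$ is totally ramified, $G_{F_k}$ acts on $V^-$ through the same $\alpha$, and \eqref{not-anomalous} (equivalently $p\nmid a_p-1$) forces $\alpha\not\equiv 1\pmod p$. This single observation yields simultaneously $\Cohomology{0}{}{F_k}{V^-}=\Cohomology{0}{}{F_k}{T^-}=\Cohomology{0}{}{F_k}{W^-}=0$. The third claim of the lemma, $\Cohomology{0}{}{\Q_p}{W^-\otimes\chi}=0$, then follows: for trivial $\chi$ it is the vanishing just established; for nontrivial anticyclotomic $\chi$, its restriction to $G_{K_v}=G_{\Q_p}$ is ramified, since the anticyclotomic $\Z_p$-extension is ramified at $v$, so inertia already acts nontrivially on $\O_L(\chi)$ and no $G_{\Q_p}$-invariants can survive.

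For the first cohomology isomorphism, the short exact sequence $0\to V^+\to V\to V^-\to 0$ combined with $\Cohomology{0}{}{F_k}{V^-}=0$ gives an injection $\Cohomology{1}{}{F_k}{V^+}\hookrightarrow\Cohomology{1}{}{F_k}{V}$. To identify its image with $\Cohomology{1}{\f}{F_k}{V}$ I will combine a containment with a dimension count. For the containment, $V^-$ is unramified with $\alpha\neq 1$, hence $\Cohomology{1}{\f}{F_k}{V^-}=\Cohomology{1}{\ur}{F_k}{V^-}=V^-/(\mathrm{Frob}-1)V^-=0$; by functoriality of Bloch--Kato, $\Cohomology{1}{\f}{F_k}{V}$ maps to zero in $\Cohomology{1}{}{F_k}{V^-}$ and therefore lies in the image of $\Cohomology{1}{}{F_k}{V^+}$. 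For the dimensions, the local Euler characteristic formula together with $\Cohomology{0}{}{F_k}{V^+}=0$ (inertia acts by the nontrivial cyclotomic character, as $\zeta_p\notin F_k$) and $\Cohomology{2}{}{F_k}{V^+}\cong\Cohomology{0}{}{F_k}{V^-}^\vee=0$ (Tate local duality with the Weil-pairing identification $(V^+)^\vee(1)\cong V^-$) give $\dim\Cohomology{1}{}{F_k}{V^+}=[F_k:\Q_p]$, while the Bloch--Kato exponential applied to \eqref{rank-1} yields the same dimension for $\Cohomology{1}{\f}{F_k}{V}$.

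The integral version then follows by bootstrapping: the vanishing $\Cohomology{0}{}{F_k}{W^-}=0$ makes $\Cohomology{1}{}{F_k}{T^-}\hookrightarrow\Cohomology{1}{}{F_k}{V^-}$ injective, so a class $c\in\Cohomology{1}{}{F_k}{T}$ lies in $\Cohomology{1}{\f}{F_k}{T}$ (defined by propagation from $V$) if and only if its image in $\Cohomology{1}{}{F_k}{V^-}$ vanishes, equivalently its image in $\Cohomology{1}{}{F_k}{T^-}$ vanishes, equivalently $c$ comes from $\Cohomology{1}{}{F_k}{T^+}$. The main technical point is really the dimension match: tracking $\Cohomography{2}{}{F_k}{V^+}=0$ depends on the identification $(V^+)^\vee(1)\cong V^-$, a standard but delicate consequence of the Weil pairing on $V_pE$ being alternating; the rest of the argument is a direct consequence of the ordinary filtration, \eqref{not-anomalous}, and \eqref{rank-1}.
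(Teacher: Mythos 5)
Your treatment of the two $H^1$ statements is essentially the paper's argument (ordinary filtration, vanishing of $H^0$ of the quotient, local Euler characteristic plus the Bloch--Kato exponential for the dimension count, then a diagram chase for the integral version), and that part is sound. But two of your inputs are wrong as stated. First, $F_k/\Q_p$ is \emph{not} totally ramified in general: the localization at $v$ of the anticyclotomic $\Z_p$-extension has a nontrivial unramified part of degree $p^{f_0}$ (this is exactly the quantity $f=p^{f_0}=f(F_\infty/F)$ that the appendix of the paper has to work around), so the Frobenius of $F_k$ acts on an unramified $V^-$ by $\alpha^{p^j}$ for some $j\ge0$, not by $\alpha$. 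The conclusion $\Cohomology{0}{}{F_k}{W^-}=0$ survives because $\alpha^{p^j}\equiv1\pmod{\m}$ iff $\alpha\equiv1\pmod{\m}$, the $p$-power map being bijective on the units of the residue field; this congruence trick is in substance the paper's proof, which moreover applies to an arbitrary character $\psi$ cutting out $V^-$ (the lemma lives in the general setting of the control-theorem section, where $V^-$ need not be unramified), not just to the elliptic-curve unit root.

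Second, and more seriously, your proof of the third assertion, $\Cohomology{0}{}{\Q_p}{W^-\otimes\chi}=0$ for nontrivial $\chi,$ does not work. The restriction $\chi|_{G_{\Q_p}}$ need not be ramified (again because of the unramified part of $F_\infty/\Q_p$), and even when it is, ``inertia acts nontrivially on $\O_L(\chi)$'' does not kill the invariants: an element $g$ of inertia acts on the divisible module $W^-\otimes\O_L(\chi)\cong L/\O_L$ by multiplication by the root of unity $\chi(g)\equiv1\pmod{\m_L},$ and $(L/\O_L)[\chi(g)-1]\cong\O_L/(\chi(g)-1)\neq0,$ so the inertia invariants are nonzero. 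The correct argument is that same congruence read the other way: since $\chi\equiv1\pmod{\m_L},$ one has $\Cohomology{0}{}{\Q_p}{W^-\otimes\chi}[\m_L]=\left(\Cohomology{0}{}{\Q_p}{W^-}\otimes_{\Z_p}\Z_p[\chi]\right)[\m_L],$ which vanishes by \eqref{not-anomalous}; as every nonzero $\O_L$-submodule of $L/\O_L$ has nonzero $\m_L$-torsion, the whole $H^0$ vanishes. You should replace your ramification argument by this one.
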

\begin{proof}
    We have $\ddr{K}{V^+}\subseteq\ddr{K}{V}.$ Since we are assuming $\lambda_1\le 0<1\le\lambda_2,$ we have $\Fil{0}{\ddr{F_k}{V^+}}=0,$ and so $\frac{\ddr{F_k}{V^+}}{\Fil{0}{\ddr{F_k}{V^+}}}=\frac{\ddr{F_k}{V}}{\Fil{0}{\ddr{F_k}{V}}},$ which means that $\Cohomology{1}{\f}{F_k}{V^+}=\Cohomology{1}{\f}{F_k}{V}.$ Since we are assuming $V^{G_{F_k}}=(V^*(1))^{G_{F_k}}=0,$ local duality and the local Euler characteristic formula gives us that $\Cohomology{1}{}{F_k}{V^+}$ has dimension $\Index{F_k}{\Q_p}$ over $L_0.$ Since the same is true for $\Cohomology{1}{\f}{F_k}{V^+},$ we conclude that
    \begin{equation}
        \Cohomology{1}{\f}{F_k}{V}=\Cohomology{1}{\f}{F_k}{V^+}=\Cohomology{1}{}{F_k}{V^+}.
    \end{equation}
    
    We now note that from \eqref{not-anomalous}, we also obtain $\Cohomology{0}{}{F_k}{W^-}=0.$ Indeed, suppose there exist a nonzero $w\in\Cohomology{0}{}{F_k}{W^-}.$ We may then take $w$ such that $\m_{L_0}w=0.$ Denote by $\psi$ the character of the representation $V^-,$ so that $W^-\iso L_0(\psi)/\O_{L_0}.$ If $\gamma\in\Gal{F_s}{\Q_p},$ we have that $\psi(\gamma^{p^s})w=w,$ that is, that $\psi(\gamma)^{p^s}\equiv1\mod \m_{\O_{L_0}}.$ But this implies $\psi(\gamma)\equiv 1\mod \m_{\O_{L_0}},$ and hence that $\gamma w=w.$ So $w\in (W^-)^{G_{\Q_p}}=0,$ which is a contradiction.
    
    Now consider the following diagram.
    \begin{equation}
    \begin{tikzcd}
        &&&\Cohomology{0}{}{F_k}{W^-}\arrow{d}\\
        \Cohomology{0}{}{F_k}{W^+}\arrow{r}\arrow{d}&\Cohomology{1}{}{F_k}{T^+}\arrow{r}\arrow{d}&\Cohomology{1}{}{F_k}{V^+}\arrow{r}\arrow{d}&\Cohomology{1}{}{F_k}{W^+}\arrow{d}\\
        \Cohomology{0}{}{F_k}{W}\arrow{r}\arrow{d}&\Cohomology{1}{}{F_k}{T}\arrow{r}&\Cohomology{1}{}{F_k}{V}\arrow{r}&\Cohomology{1}{}{F_k}{W}\\
        \Cohomology{0}{}{F_k}{W^-}&&&
    \end{tikzcd}
    \end{equation}
    Using that $\Cohomology{0}{}{F_k}{W^-}=0$ and that $\Cohomology{1}{}{F_k}{V^+}=\Cohomology{1}{\f}{F_k}{V},$ a diagram chasing let us conclude that $\Cohomology{1}{}{F_k}{T^+}=\Cohomology{1}{\f}{F_k}{T}.$
    
    For the last claim, we note, since $\chi\equiv1\mod\m_L,$ that we have $\Cohomology{0}{}{\Q_p}{W^-\otimes\chi}[\m_L]=\left(\Cohomology{0}{}{\Q_p}{W^-}\otimes_{\Z_p}\Z_p[\chi]\right)[\m_L],$ which is $0$ by \eqref{not-anomalous}.
\end{proof}

\begin{corollary}\label{twist-compatibility}
    We have the following isomorphism
    \begin{equation}
        \Cohomology{1}{\f}{\Q_p}{T\otimes\chi}\rightiso\Cohomology{1}{\f}{F_s}{T\otimes\chi}^{G_s}=\left(\Cohomology{1}{\f}{F_s}{T}\otimes_{\Z_p}\Z_p(\chi)\right)^{G_s},
    \end{equation}
    given by restriction.
\end{corollary}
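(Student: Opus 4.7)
The second equality is straightforward: by the definition of $s,$ the character $\chi|_{G_{F_s}}$ is trivial, so as a $G_{F_s}$-module $T\otimes\chi\iso T\otimes_{\Z_p}\Z_p(\chi)$ with trivial action on the second factor. Since $\Z_p(\chi)$ is a free $\Z_p$-module, cohomology commutes with this tensor decomposition, and the Bloch--Kato $\f$ condition respects it as $\mathbb{B}_{\mathrm{cris}}$ is a $\Q_p$-algebra.

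For the restriction isomorphism, the plan is to first establish the corresponding statement on full cohomology via inflation--restriction
\begin{equation*}
0\to \Cohomology{1}{}{G_s}{(T\otimes\chi)^{G_{F_s}}}\to \Cohomology{1}{}{\Q_p}{T\otimes\chi}\to \Cohomology{1}{}{F_s}{T\otimes\chi}^{G_s}\to \Cohomology{2}{}{G_s}{(T\otimes\chi)^{G_{F_s}}}.
\end{equation*}
Since $\chi|_{G_{F_s}}$ is trivial, $(T\otimes\chi)^{G_{F_s}}=T^{G_{F_s}}\otimes_{\Z_p}\O_L;$ and since $T$ is $\Z_p$-torsion-free, $T^{G_{F_s}}\hookrightarrow V^{G_{F_s}}=0$ by \eqref{no-inv}. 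Thus $(T\otimes\chi)^{G_{F_s}}=0,$ and restriction induces an isomorphism on full cohomology. The same argument with $V$ in place of $T$ gives the analogous rational isomorphism.

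To descend to the $\f$ subgroups, I would invoke the standard compatibility of the Bloch--Kato local condition with finite base change: since $V\otimes\chi$ is de Rham and satisfies $\dcris{\Q_p}{V\otimes\chi}^{\Phi=1}=0$ by \eqref{e=f-chi} (and the same over $F_s,$ which follows from \eqref{e=f} via $\dcris{F_s}{V\otimes\chi}\iso\dcris{F_s}{V}\otimes_{\Q_p}L$), the Bloch--Kato exponential is available over both $\Q_p$ and $F_s,$ and the identification $\ddr{F_s}{V\otimes\chi}\iso F_s\otimes_{\Q_p}\ddr{\Q_p}{V\otimes\chi}$ with $F_s^{G_s}=\Q_p$ shows that restriction induces $\Cohomology{1}{\f}{\Q_p}{V\otimes\chi}\rightiso\Cohomology{1}{\f}{F_s}{V\otimes\chi}^{G_s}.$ Since $\Cohomology{1}{\f}{\cdot}{T\otimes\chi}$ is by definition the preimage of $\Cohomology{1}{\f}{\cdot}{V\otimes\chi}$ under the natural map, the integral isomorphism on full cohomology established above restricts to the desired isomorphism of $\f$ subgroups.

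No substantial obstacle is expected: the only step requiring a little care is the Bloch--Kato compatibility with restriction to $F_s,$ but under the hypotheses \eqref{no-inv}, \eqref{e=f} and \eqref{e=f-chi} already in force this is formal from the exponential map and the de Rham formalism.
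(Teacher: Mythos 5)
Your proposal is correct, but it takes a genuinely different route from the paper. The paper never touches the Bloch--Kato exponential here: it invokes the ordinary filtration and the preceding \cref{ordinary-lemma}, which identifies $\Cohomology{1}{\f}{F_k}{T}$ with the full cohomology $\Cohomology{1}{}{F_k}{T^+}$ of the ordinary subrepresentation (this is where \eqref{ordinary} and \eqref{not-anomalous} enter, together with the observation that $W^-\otimes\chi$ is again non-anomalous since $\chi\equiv 1\bmod\m_L$). Inflation--restriction is then applied to $T^+\otimes\chi$, for which the compatibility of the finite conditions is automatic because they have been replaced by full cohomology groups. You instead apply inflation--restriction to $T\otimes\chi$ itself and match the $\f$-conditions directly via the de Rham formalism: the exponential identifies $\Cohomology{1}{\f}{F_s}{V\otimes\chi}$ with $F_s\otimes_{\Q_p}$ of the corresponding space over $\Q_p$ equivariantly, so a dimension count plus injectivity of restriction (from \eqref{no-inv}) gives the rational isomorphism, and the integral statement follows since $\Cohomology{1}{\f}{\cdot}{T\otimes\chi}$ is the preimage of the rational $\f$-subspace. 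Your argument buys independence from \eqref{ordinary} and \eqref{not-anomalous}, at the cost of having to justify the compatibility of the crystalline condition with restriction along the ramified extension $F_s/\Q_p$ (which is exactly the step you flag, and which does go through under \eqref{e=f} and \eqref{e=f-chi}); the paper's argument is more elementary once \cref{ordinary-lemma} is in place, and that lemma is needed elsewhere in the appendix anyway.
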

\begin{proof}
    Because of \eqref{no-inv}, we have an isomorphism
    \begin{equation}
        \Cohomology{1}{}{\Q_p}{T^+\otimes\chi}\rightiso\Cohomology{1}{}{F_s}{T^+\otimes\chi}^{G_s}=\left(\Cohomology{1}{}{F_s}{T^+}\otimes_{\Z_p}\Z_p(\chi)\right)^{G_s}.
    \end{equation}
    Now the claim follows once we note that \cref{ordinary-lemma} also works for $T\otimes\chi.$ Indeed, \cref{ordinary-lemma} implies that $V\otimes\chi$ is also not anomalous, and with that, the proof carries through.
\end{proof}

\begin{corollary}\label{C(T)=1}
    For all $k\ge1$ we have a surjection
    \begin{equation}
        \Cohomology{1}{\f}{F_k}{T}\xtwoheadrightarrow{\Tr}\Cohomology{1}{\f}{F_{k-1}}{T}.
    \end{equation}
    \end{corollary}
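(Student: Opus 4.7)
The plan is to reduce the statement to a Galois cohomology vanishing, using local Tate duality to convert surjectivity of a corestriction into injectivity of a restriction on the Tate-dual module. First I would invoke \cref{ordinary-lemma} to replace $\Cohomology{1}{\f}{F_k}{T}$ by $\Cohomology{1}{}{F_k}{T^+}$ for $k\ge 0$; since this identification is induced by the inclusion $T^+ \hookrightarrow T$, it is compatible with the corestriction (trace) maps on $F_k/F_{k-1}$. Thus it suffices to show that
\begin{equation*}
    \mathrm{Cor}\colon \Cohomology{1}{}{F_k}{T^+} \to \Cohomology{1}{}{F_{k-1}}{T^+}
\end{equation*}
is surjective.

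Setting $M \defeq \Hom(T^+, \mu_{p^\infty})$, local Tate duality pairs $\Cohomology{1}{}{F_j}{T^+}$ with $\Cohomology{1}{}{F_j}{M}$, and interchanges corestriction with restriction. Hence $\mathrm{Cor}$ is surjective if and only if the restriction
\begin{equation*}
    \mathrm{Res}\colon \Cohomology{1}{}{F_{k-1}}{M} \to \Cohomology{1}{}{F_k}{M}
\end{equation*}
is injective. With $\Gamma \defeq \Gal{F_k}{F_{k-1}}$ (cyclic of order $p$), inflation--restriction identifies $\ker(\mathrm{Res}) = \Cohomology{1}{}{\Gamma}{M^{G_{F_k}}}$, so everything reduces to showing $M^{G_{F_k}} = 0$.

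For this last step, which is the conceptual heart of the argument, I would exploit the Weil pairing on $T = T_pE$: it yields $V \iso V^\vee(1)$ as $G_{K_v}$-modules and interchanges $V^+$ with the quotient, giving a canonical isomorphism $(V^+)^\vee(1) \iso V^-$ that carries $(T^+)^\vee(1)$ to $T^-$. Tensoring with $\Q_p/\Z_p$ then identifies $M \iso W^-$ as $G_{K_v}$-modules, so in particular $M^{G_{F_k}} \iso (W^-)^{G_{F_k}}$. The final clause in the proof of \cref{ordinary-lemma} already establishes $(W^-)^{G_{F_k}} = 0$ for every $k \ge 0$, using \eqref{not-anomalous} together with the fact that $\units{(\O_{L_0}/\m)}$ has order coprime to $p$ (so $\psi(\gamma)^{p^s}\equiv 1$ forces $\psi(\gamma)\equiv 1$). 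Consequently $\Cohomology{1}{}{\Gamma}{M^{G_{F_k}}} = 0$ and we are done.

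The only delicate point is the identification $M \iso W^-$, which relies on the Weil pairing and the specific form of the ordinary filtration for $T_pE$; with this identification, the Galois-cohomological input is purely formal and the argument reduces to invoking earlier results already proven in the paper.
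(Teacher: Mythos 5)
Your proposal is correct and follows essentially the same route as the paper: both reduce via \cref{ordinary-lemma} to a statement about $H^1(F_\bullet,T^+)$, then use local Tate duality and \eqref{tau-dual} to convert the surjectivity into the vanishing $(W^-)^{G_{F_k}}=0$, which is supplied by \eqref{not-anomalous} as in the proof of \cref{ordinary-lemma}. The only (cosmetic) difference is mechanical: the paper bounds $\Coker{\Tr}$ inside $\Cohomology{2}{}{F_{k-1}}{T^+\otimes I_G}$ via Shapiro's lemma and the augmentation sequence before dualizing, whereas you dualize the whole map first (corestriction versus restriction) and apply inflation--restriction.
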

\begin{proof}
    By \cref{ordinary-lemma}, we have $\Cohomology{1}{\f}{F_k}{T}=\Cohomology{1}{}{F_k}{T^+}$ for all $k.$ By Shapiro's Lemma, the map 
    \begin{equation}
        \Tr\colon \Cohomology{1}{\f}{F_k}{T}\to\Cohomology{1}{\f}{F_{k-1}}{T}
    \end{equation}
    sits in the long exact sequence induced by
    \begin{equation}
        0\to T^+\otimes I_G\to T^+\otimes\Z[G]\xrightarrow{\mathrm{aug}}T^+\to 0.
    \end{equation}
    So the cokernel of $\mathrm{\Tr}$ lies inside
    \begin{equation}
        \Cohomology{2}{}{F_{k-1}}{T^+\otimes I_G}.
    \end{equation}
    It suffices to prove this is $0.$ By local duality and \eqref{tau-dual}, this is dual to the conjugate of
    \begin{equation}
        \Cohomology{0}{}{F_{k-1}}{\Hom_\Z(I_G,W^-)}=\Hom_G(I_G,(W^-)^{G_{F_k}}),
    \end{equation}
    and this is $0$ since $\Cohomology{0}{}{F_k}{W^-}=0$ as in the proof of \cref{ordinary-lemma}.
\end{proof}

Combining \cref{twist-calculation} and \cref{twist-compatibility}, we have
\begin{equation}
    S_{\chi}\colon\Cohomology{1}{\f}{F_s}{T}[\Tr]\rightiso\left(\Cohomology{1}{\f}{F_s}{T}\otimes\Z_p(\chi)\right)^{G_s}\iso\Cohomology{1}{\f}{\Q_p}{T\otimes\chi}.
\end{equation}

For simplicity, we will denote $\Tr=\Tr_{F_s/F_{s-1}}$ and $G=\Gal{F_s}{F_{s-1}}.$ Consider the following diagram.
\begin{equation}
    \begin{tikzcd}
    \frac{\ddr{F_s}{V}}{\Fil{0}{\ddr{F_s}{V}}}[\Tr]&\arrow{l}[swap]{\log}\Cohomology{1}{\f}{F_s}{T}[\Tr]\arrow{r}{S_{\chi}}[swap]{\sim}&\Cohomology{1}{\f}{\Q_p}{T\otimes\chi}\arrow{r}{\log}&\frac{\ddr{}{V\otimes \chi}}{\Fil{0}{\ddr{}{V\otimes\chi}}}\\[-20pt]
    \otimes&&&\otimes\\[-20pt]
    \Fil{1}{\ddr{F_s}{V^\vee}}^{G_{\Q_p}}\arrow[equal]{rrr}\arrow{d}&&&\Fil{1}{\ddr{}{V^\vee}}\arrow{d}\\
    (F_s\otimes_FL_0)[\Tr]\arrow{rrr}{S_\chi}[swap]{\sim}&&&\ddr{}{L(\chi)}
    \end{tikzcd}
\end{equation}

This commutativity means that we can write
\begin{equation}
\begin{split}
    \Index{\Cohomology{1}{\f}{\Q_p}{T\otimes\chi}_{/\tor}}{z}&=\Index{\log_\omega\left(\Cohomology{1}{\f}{\Q_p}{T\otimes\chi}\right)}{\log_\omega z}\\
    &=\Index{S_\chi\log_\omega\left(\Cohomology{1}{\f}{F_s}{T}[\Tr]\right)}{\log_\omega z}\\
    &=\frac{\Index{S_\chi\left(\left(\O_{F_s}\otimes\O_{L_0}\right)[\Tr]\right)}{\log_\omega z}}{\Index{\left(\O_{F_s}\otimes\O_{L_0}\right)[\Tr]}{\log_\omega\left(\Cohomology{1}{\f}{F_s}{T}[\Tr]\right)}}.
\end{split}
\end{equation}

Denote $(\O_{F_s})^\chi\defeq(\O_{F_s}\otimes_{\Z_p}\O_L(\chi))^{G_s}$ and $(\O_{F_s})_\chi\defeq \O_{F_s}\otimes_{\Z_p[G_s]}\O_L(\chi^{-1}).$ By \cref{twist-calculation}, we then have $S_\chi\left(\left(\O_{F_s}\otimes\O_{L_0}\right)[\Tr]\right)=(\O_{F_s})^{\chi},$ and so we can write the above as
\begin{equation}[eq_appendix-partial-progress]
    \Index{\Cohomology{1}{\f}{\Q_p}{T\otimes\chi}_{/\tor}}{z}=\frac{\Index{(\O_{F_s})_\chi}{\log_\omega z}}{\Index{(\O_{F_s})_\chi}{(\O_{F_s})^\chi}}\cdot\frac{1}{\Index{(\O_{F_s}\otimes\O_{L_0})[\Tr]}{\log_\omega\left(\Cohomology{1}{\f}{F_s}{T}[\Tr]\right)}}.
\end{equation}

\begin{proposition}\label{comp1}
We have
\begin{equation}
    \Index{(\O_{F_s})_\chi}{(\O_{F_s})^\chi}=\frac{\abs{\O_L/p^s}}{\Index{\O_{F_{s-1}}}{\Tr(\O_{F_s})}^{\Index{L_0}{\Q_p}}}.
\end{equation}
\end{proposition}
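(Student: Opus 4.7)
The plan is to realize $(\O_{F_s})^\chi$ and $(\O_{F_s})_\chi$ as the kernel and cokernel of a single endomorphism of $A := \O_{F_s}\otimes_{\Z_p}\O_L$, thereby reducing the index to a Herbrand quotient. Fix a generator $\gamma$ of $G_s$ and set $\zeta := \chi(\gamma)$. The isomorphism $\O_L(\chi^{-1})\cong \O_L[G_s]/(\gamma-\zeta^{-1})$ yields an exact sequence
\begin{equation*}
    0\to (\O_{F_s})^\chi\to A\xrightarrow{\gamma-\zeta^{-1}}A\to (\O_{F_s})_\chi\to 0.
\end{equation*}
The natural map $(\O_{F_s})^\chi\hookrightarrow (\O_{F_s})_\chi$ is then actually injective, because the kernel and image of $\gamma-\zeta^{-1}$ are complementary isotypic components in the generic fiber $F_s\otimes_{\Q_p}L$ and $A$ is $\O_L$-torsion-free.

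Next, I would introduce the twisted norm $N^\chi := \sum_{\sigma\in G_s}\chi(\sigma)\sigma$ acting on $A$. A direct check shows $N^\chi$ vanishes on $(\gamma-\zeta^{-1})A$ and takes values in $(\O_{F_s})^\chi$, hence descends to $\bar{N}^\chi\colon (\O_{F_s})_\chi\to (\O_{F_s})^\chi$; moreover, the composition $(\O_{F_s})^\chi\hookrightarrow (\O_{F_s})_\chi\xrightarrow{\bar{N}^\chi}(\O_{F_s})^\chi$ is multiplication by $|G_s|=p^s$, since $N^\chi$ acts as $p^s$ on the one-dimensional $\chi^{-1}$-isotypic component of $F_s\otimes L$. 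Combined with the freeness of $(\O_{F_s})^\chi$ as a rank-one $\O_L$-module and the Tate cohomology exact sequence $0\to \hat{\mathrm{H}}^{-1}(G_s,A^\star)\to (\O_{F_s})_\chi\xrightarrow{\bar{N}^\chi}(\O_{F_s})^\chi\to \hat{\mathrm{H}}^{0}(G_s,A^\star)\to 0$, where $A^\star$ denotes $A$ endowed with the twisted action $\sigma\star a := \chi(\sigma)\sigma a$, a short cardinality computation gives
\begin{equation*}
    \Index{(\O_{F_s})_\chi}{(\O_{F_s})^\chi} = \frac{\abs{\O_L/p^s}}{h(G_s,A^\star)},
\end{equation*}
with $h(G_s,A^\star) := \abs{\hat{\mathrm{H}}^0(G_s,A^\star)}/\abs{\hat{\mathrm{H}}^{-1}(G_s,A^\star)}$ the Herbrand quotient.

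It remains, and this is the main obstacle, to identify $h(G_s,A^\star) = \Index{\O_{F_{s-1}}}{\Tr_{F_s/F_{s-1}}(\O_{F_s})}^{\Index{L_0}{\Q_p}}$. The exponent $\Index{L_0}{\Q_p}$ emerges from base change: since $\O_{L_0}$ carries the trivial $G_s$-action, extending scalars from $\Z_p$ to $\O_{L_0}$ multiplies $\Z_p$-lengths of Tate cohomology groups by $\Index{L_0}{\Q_p}$, reducing to the analogous computation over $\Z_p[\zeta]$. For that residual computation I plan to use the filtration $1\subset H:=\Gal{F_s}{F_{s-1}}\subset G_s$ together with the Hochschild-Serre spectral sequence, exploiting that $\chi|_H$ is the faithful character of order $p$; multiplicativity of the Herbrand quotient in short exact sequences together with Shapiro's lemma for the unramified layer $F_s/F_{f_0}/\Q_p$ should reduce the identification to $\hat{\mathrm{H}}^0(H,\O_{F_s})=\O_{F_{s-1}}/\Tr(\O_{F_s})$. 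Tracking the $\chi$-twist through this spectral sequence and verifying that the other Tate cohomology contributions cancel is the delicate bookkeeping constituting the bulk of the proof.
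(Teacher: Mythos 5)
Your first half reproduces the skeleton of the paper's own argument: the paper's maps $\alpha$ and $\beta$ in the proof of \cref{comp1} are exactly your inclusion and (up to the factor $\tfrac{1}{p^s}$) your twisted norm $\bar N^\chi$, and your cardinality bookkeeping via the sequence $0\to\hat{\mathrm{H}}^{-1}\to(\O_{F_s})_\chi\to(\O_{F_s})^\chi\to\hat{\mathrm{H}}^{0}\to0$ is sound. The gap is the step you defer, and it is not bookkeeping: the identity you would still need, namely $h(G_s,A^\star)=\Index{\O_{F_{s-1}}}{\Tr(\O_{F_s})}^{\Index{L_0}{\Q_p}}$, is false. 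For a finite cyclic group the Herbrand quotient of a finitely generated module depends only on the underlying rational representation, and by the normal basis theorem $A^\star\otimes_{\Z_p}\Q_p\cong F_s\otimes_{\Q_p}L(\chi)\cong L[G_s]$ is a free $\Q_p[G_s]$-module, hence cohomologically trivial; so $h(G_s,A^\star)=1$ identically, while $\Index{\O_{F_{s-1}}}{\Tr(\O_{F_s})}>1$ whenever $F_s/F_{s-1}$ is wildly ramified, i.e.\ precisely in the range $s>f_0$ where the proposition is used. No amount of Hochschild--Serre bookkeeping can produce a nontrivial value for a quotient that is identically $1$. The factor in the statement is not a Herbrand quotient of $A^\star$ over $G_s$ but the order of the \emph{single} group $\hat{\mathrm{H}}^0(G_s,A^\star)=\Coker(\bar N^\chi)$, and computing that single group integrally is the actual content of the proof. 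The paper does this with the explicit isomorphism $S_\chi$ of \cref{twist-calculation}, which identifies $(\O_{F_s})^\chi$ with $\O_{F_s}[\Tr_{F_s/F_{s-1}}]\otimes\O_{L_0}$ and the image of the averaging map with $I_G\O_{F_s}\otimes\O_{L_0}$ for $G=\Gal{F_s}{F_{s-1}}$, so that $\Coker(\bar N^\chi)\cong\hat{\mathrm{H}}^{-1}(\Gal{F_s}{F_{s-1}},\O_{F_s})\otimes\O_{L_0}$; the only Herbrand quotient actually invoked is the trivial one of the \emph{untwisted} module $\O_{F_s}$ over the \emph{subgroup} $\Gal{F_s}{F_{s-1}}$, used to convert $\lvert\hat{\mathrm{H}}^{-1}\rvert$ into $\lvert\hat{\mathrm{H}}^{0}\rvert=\Index{\O_{F_{s-1}}}{\Tr(\O_{F_s})}$.

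Your more careful accounting also surfaces a point you will have to confront if you pursue this route: since $h(G_s,A^\star)=1$ forces $\lvert\hat{\mathrm{H}}^{-1}(G_s,A^\star)\rvert=\lvert\hat{\mathrm{H}}^{0}(G_s,A^\star)\rvert$, the literal order of the cokernel of the inclusion $(\O_{F_s})^\chi\hookrightarrow(\O_{F_s})_\chi$ comes out to $\abs{\O_L/p^s}$ on the nose, which differs from the displayed formula by $\lvert\hat{\mathrm{H}}^{-1}(G_s,A^\star)\rvert$ --- the torsion submodule of $(\O_{F_s})_\chi$. The paper's identity $\abs{\Coker\alpha}=\abs{\O_L/p^s}/\abs{\Coker\beta}$ tacitly treats $\beta$ as injective, i.e.\ works with the generalized index of the averaging map rather than with the raw cokernel of the inclusion. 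Any complete write-up must pin down which of these the symbol $\Index{(\O_{F_s})_\chi}{(\O_{F_s})^\chi}$ denotes and use it consistently with the way \cref{comp1} is consumed in \cref{first-special-value-formula}; your proposal, as written, computes the raw cokernel and therefore cannot land on the stated right-hand side.
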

\begin{proof}
We have natural maps
\begin{equation}
    (\O_{F_s})^{\chi}\xrightarrow{\alpha}(\O_{F_s})_{\chi}\xrightarrow{\beta}\frac{1}{p^s}(\O_{F_s})^{\chi}
\end{equation}
where $\alpha$ is the natural inclusion and $\beta(a\otimes b)=\frac{1}{p^s}\sum_{g\in \Gal{F_s}{F}}ga\otimes\chi^{-1}(g)b.$ Since $\O_{F_s}$ is torsion-free, $\alpha$ and $\beta$ are inverses if we extend scalars and so $\abs{\Coker{\alpha}}=(\abs{\O_L/p^s})/\abs{\Coker{\beta}}.$

If $\gamma$ is such that $\chi(\gamma)=\zeta,$ then $\beta(a\otimes 1)=S_\chi(a-\gamma a),$ and so we may conclude that $\Im{\beta}=\frac{1}{p^s}S_\chi(I_G\O_{F_s})\otimes_{\Z_p}\O_{L_0},$ and hence, by \cref{twist-calculation}, that $\abs{\Coker{\beta}}=\Index{\O_{F_s}[\Tr]}{I_G\O_{F_s}}^{\Index{L_0}{\Q_p}}.$ Since $G$ is cyclic, this is the same as $\Index{O_{F_{s-1}}}{\Tr(\O_{F_s})}^{\Index{L_0}{\Q_p}}.$
\end{proof}

\begin{proposition}\label{comp2}
We have
    \begin{equation}
        \Index{\log_\omega(\Cohomology{1}{\f}{F_s}{T})[\Tr]}{\log_\omega(\Cohomology{1}{\f}{F_s}{T}[\Tr])}=\frac{\abs{\Cohomology{1}{\f}{F_{s-1}}{T}_\tor}}{\abs{\Tr\left(\Cohomology{1}{\f}{F_{s}}{T}_\tor\right)}}.
    \end{equation}
\end{proposition}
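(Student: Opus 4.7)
Write $A=\Cohomology{1}{\f}{F_s}{T}$, $A'=\Cohomology{1}{\f}{F_{s-1}}{T}$, and $K=\log_\omega(A)\subseteq F_s\otimes L_0$. Since $\log_\omega$ is $\Q_p$-linear on a $\Z_p$-lattice inside a torsion-free module and kills precisely the torsion, the natural map $\log_\omega\colon A\twoheadrightarrow K$ has kernel $A_\tor$ and $K$ is torsion-free. Moreover $\log_\omega$ intertwines the trace operators on $F_s\otimes L_0$ and $F_{s-1}\otimes L_0$ with the corestriction $\Tr\colon A\to A'$, so $\log_\omega(A[\Tr])\subseteq K[\Tr]$ where $K[\Tr]$ means the kernel of $\Tr$ restricted to $K$.

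The plan is to compute the quotient $K[\Tr]/\log_\omega(A[\Tr])$ explicitly. The key construction is the well-defined group homomorphism
\begin{equation}
    \Phi\colon K[\Tr]\longrightarrow A'_\tor/\Tr(A_\tor),\qquad \log_\omega(a)\longmapsto \Tr(a)\bmod \Tr(A_\tor),
\end{equation}
defined by choosing a preimage $a\in A$ of the given class in $K[\Tr]$; the target makes sense because $\log_\omega(\Tr(a))=\Tr(\log_\omega(a))=0$ forces $\Tr(a)\in A'_\tor$, and the indeterminacy in the preimage $a$ lives in $A_\tor$, so $\Phi$ is well defined. A direct chase then shows $\ker\Phi=\log_\omega(A[\Tr])$: an element $\log_\omega(a)$ is in the kernel iff $\Tr(a)=\Tr(t)$ for some $t\in A_\tor$, iff $a-t\in A[\Tr]$, iff $\log_\omega(a)\in\log_\omega(A[\Tr])$. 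Thus $\Phi$ induces an injection
\begin{equation}
    K[\Tr]/\log_\omega(A[\Tr])\hookrightarrow A'_\tor/\Tr(A_\tor),
\end{equation}
whose image is visibly $(\Tr(A)\cap A'_\tor)/\Tr(A_\tor)$.

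The final step is to identify this image with all of $A'_\tor/\Tr(A_\tor)$, which reduces to showing $A'_\tor\subseteq\Tr(A)$. This is precisely where \cref{C(T)=1} enters: the surjectivity $\Tr\colon\Cohomology{1}{\f}{F_s}{T}\twoheadrightarrow\Cohomology{1}{\f}{F_{s-1}}{T}$ gives $\Tr(A)=A'$, hence $\Tr(A)\cap A'_\tor=A'_\tor$. Combining,
\begin{equation}
    \bigl[K[\Tr]:\log_\omega(A[\Tr])\bigr]=\lvert A'_\tor/\Tr(A_\tor)\rvert=\frac{\lvert\Cohomology{1}{\f}{F_{s-1}}{T}_\tor\rvert}{\lvert\Tr(\Cohomology{1}{\f}{F_s}{T}_\tor)\rvert},
\end{equation}
which is the claimed identity.

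The only nontrivial input is the surjectivity of the trace in \cref{C(T)=1}; everything else is a purely formal diagram chase using that $\log_\omega$ is $\Tr$-equivariant with torsion kernel and torsion-free image. The main thing to be careful about is the well-definedness of $\Phi$ and the identification of $\ker\Phi$, both of which hinge on the observation that $\Tr(A_\tor)\subseteq A'_\tor$.
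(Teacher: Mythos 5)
Your proof is correct and is essentially the paper's argument: the paper identifies the left-hand index with $\Index{(\Cohomology{1}{\f}{F_s}{T}_{/\tor})[\Tr]}{\Cohomology{1}{\f}{F_s}{T}[\Tr]_{/\tor}}$ and applies the snake lemma to the trace map between the torsion exact sequences, invoking \cref{C(T)=1} to kill the cokernel term; your map $\Phi$ is exactly that connecting homomorphism written out by hand.
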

\begin{proof}
By the injectivity of $\log_\omega$ up to torsion, the index in the left hand side is
\begin{equation}
    \Index{(\Cohomology{1}{\f}{F_s}{T}_{/\tor})[\Tr]}{\Cohomology{1}{\f}{F_s}{T}[\Tr]_{/\tor}},
\end{equation}
and now the claim follows from a snake lemma in the following diagram together with \cref{C(T)=1}.
\begin{equation}
    \begin{tikzcd}
    0\arrow{r}&\Cohomology{1}{\f}{F_s}{T}_{\tor}\arrow{r}\arrow{d}{\Tr}&\Cohomology{1}{\f}{F_s}{T}\arrow{r}\arrow{d}{\Tr}&\Cohomology{1}{\f}{F_s}{T}_{/\tor}\arrow{r}\arrow{d}{\Tr}&0\\
    0\arrow{r}&\Cohomology{1}{\f}{F_{s-1}}{T}_\tor\arrow{r}&\Cohomology{1}{\f}{F_{s-1}}{T}\arrow{r}&\Cohomology{1}{\f}{F_{s-1}}{T}_{/\tor}\arrow{r}&0
    \end{tikzcd}\qedhere
\end{equation}
\end{proof}

We recall that
\begin{equation}
    A(T,\omega,k)\defeq\frac{\Index{\O_{F_k}\otimes\O_{L_0}}{\log_\omega\left(\Cohomology{1}{\f}{F_k}{T}\right)}}{\abs{\Cohomology{1}{\f}{F_k}{T}}_\tor},
\end{equation}
and we will also denote
\begin{equation}
    A'(T,\omega,k)\defeq\Index{\O_{F_k}\otimes\O_{L_0}}{\log_\omega\left(\Cohomology{1}{\f}{F_k}{T}\right)}.
\end{equation}

\begin{proposition}\label{comp3}
We have
\begin{equation}
    \Index{(\O_{F_s}\otimes\O_{L_0})[\Tr]}{\log_\omega(\Cohomology{1}{\f}{F_s}{T})[\Tr]}=\Index{\O_{F_{s-1}}}{\Tr(\O_{F_s})}^{\Index{L_0}{F}}\cdot\frac{A'(T,\omega,s)}{A'(T,\omega,s-1)}.
\end{equation}
\end{proposition}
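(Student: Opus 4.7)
The strategy is a snake lemma computation comparing the ``kernel of $\Tr$'' columns of two lattices. Set $M := \O_{F_s}\otimes\O_{L_0}$ and $N := \log_\omega(\Cohomology{1}{\f}{F_s}{T})$, so that $N \subseteq M$ is a full-rank sublattice with $[M:N] = A'(T,\omega,s)$. Similarly set $M' := \O_{F_{s-1}}\otimes\O_{L_0}$ and $N' := \log_\omega(\Cohomology{1}{\f}{F_{s-1}}{T})$, so $[M':N']=A'(T,\omega,s-1)$. The trace map carries $M$ into $M'$ with image $\Tr(\O_{F_s})\otimes\O_{L_0}$, of index $\Index{\O_{F_{s-1}}}{\Tr(\O_{F_s})}^{\Index{L_0}{\Q_p}}$ in $M'$.

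The key input will be \cref{C(T)=1}: the trace $\Cohomology{1}{\f}{F_s}{T}\twoheadrightarrow\Cohomology{1}{\f}{F_{s-1}}{T}$ is surjective, and since $\log_\omega$ is Galois-equivariant and commutes with the norm/trace on cohomology via the commutative diagrams of \eqref{Bloch-Kato-Kummer} (or more directly via functoriality of $\dr{}{-}$), this gives $\Tr(N) = N'$.

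Now consider the commutative diagram of short exact sequences (with vertical inclusions)
\begin{equation}
    \begin{tikzcd}
    0\arrow{r}&N[\Tr]\arrow{r}\arrow[hook]{d}&N\arrow{r}{\Tr}\arrow[hook]{d}&\Tr(N)\arrow{r}\arrow[hook]{d}&0\\
    0\arrow{r}&M[\Tr]\arrow{r}&M\arrow{r}{\Tr}&\Tr(M)\arrow{r}&0
    \end{tikzcd}
\end{equation}
Since all vertical maps are injective, the snake lemma yields
\begin{equation}
    0\to M[\Tr]/N[\Tr]\to M/N\to \Tr(M)/\Tr(N)\to 0,
\end{equation}
so $\Index{M[\Tr]}{N[\Tr]} = \Index{M}{N}/\Index{\Tr(M)}{\Tr(N)}$. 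The numerator equals $A'(T,\omega,s)$. For the denominator, use $\Tr(N)=N'\subseteq \Tr(M)\subseteq M'$ and the multiplicativity of indices to compute
\begin{equation}
    \Index{\Tr(M)}{\Tr(N)} = \frac{\Index{M'}{N'}}{\Index{M'}{\Tr(M)}} = \frac{A'(T,\omega,s-1)}{\Index{\O_{F_{s-1}}}{\Tr(\O_{F_s})}^{\Index{L_0}{\Q_p}}}.
\end{equation}
Substituting gives the desired identity.

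The only subtle point is the compatibility $\log_\omega\circ\Tr=\Tr\circ\log_\omega$, which ensures $\Tr(N)=N'$; this follows since the Bloch--Kato exponential is induced by the Galois-equivariant identification of $\Cohomology{1}{\f}{F_k}{V}$ with a quotient of $\ddr{F_k}{V}$, and the trace on cohomology is the corestriction, which corresponds under this identification to the trace of $F_s/F_{s-1}$ applied to the $\ddr{}{-}$-component. Everything else is a formal manipulation of indices of full-rank sublattices of free $\Z_p$-modules.
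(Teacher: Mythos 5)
Your argument is essentially the paper's: both proofs reduce to a snake lemma whose two inputs are the surjectivity of $\Tr\colon\Cohomology{1}{\f}{F_s}{T}\to\Cohomology{1}{\f}{F_{s-1}}{T}$ from \cref{C(T)=1} (transported through $\log_\omega$ via the compatibility of the Bloch--Kato logarithm with corestriction, which you rightly single out) and the identification of the cokernel of $\Tr\colon\O_{F_s}\otimes\O_{L_0}\to\O_{F_{s-1}}\otimes\O_{L_0}$ as a module of order $\Index{\O_{F_{s-1}}}{\Tr(\O_{F_s})}^{\Index{L_0}{\Q_p}}$; you simply run the snake lemma with the inclusions $N\subseteq M$ as the vertical maps and the trace sequences as the rows, where the paper takes the transpose diagram (rows $0\to\Cohomology{1}{\f}{F_k}{T}_{/\tor}\to\tfrac{1}{p^N}\O_{F_k}\otimes\O_{L_0}\to A_k\to 0$, vertical traces). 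The one point to repair is your opening claim that $N=\log_\omega(\Cohomology{1}{\f}{F_s}{T})$ is \emph{contained} in $M=\O_{F_s}\otimes\O_{L_0}$: the Bloch--Kato logarithm can have denominators, so this containment need not hold and $A'(T,\omega,s)$ is a priori only a generalized index of commensurable lattices. This is exactly why the paper first picks $N\gg0$ with $\log_\omega(\Cohomology{1}{\f}{F_s}{T})\subseteq\frac{1}{p^N}\O_{F_s}\otimes\O_{L_0}$ and works inside the enlarged lattice, stripping off the normalizing power of $p$ at the end. Your computation survives this adjustment verbatim: either replace $M,M'$ by $\frac{1}{p^N}\O_{F_s}\otimes\O_{L_0}$ and $\frac{1}{p^N}\O_{F_{s-1}}\otimes\O_{L_0}$ and note that both sides of the asserted identity scale by the same power of $p$ (because $\rank\, M[\Tr]=\rank\, M-\rank\, M'$), or interpret every index in your manipulation as a generalized index, for which the multiplicativity you invoke remains valid. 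With that caveat, the rest of your computation, including $\Index{\Tr(M)}{\Tr(N)}=A'(T,\omega,s-1)/\Index{\O_{F_{s-1}}}{\Tr(\O_{F_s})}^{\Index{L_0}{\Q_p}}$, is correct.
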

\begin{proof}
Let $N$ be a sufficiently large integer such that $\log_\omega(\Cohomology{1}{\f}{F_s}{T})\subseteq \frac{1}{p^N}\O_{F_s}\otimes\O_{L_0},$ so that
\begin{equation}
\begin{split}
    \Index{(\O_{F_s}\otimes\O_{L_0})[\Tr]}{\log_\omega(\Cohomology{1}{\f}{F_s}{T})[\Tr]}&=\frac{\Index{\frac{1}{p^N}(\O_{F_s}\otimes\O_{L_0})[\Tr]}{\log_\omega(\Cohomology{1}{\f}{F_s}{T})[\Tr]}}{\Index{\frac{1}{p^N}\O_{F_s}[\Tr]}{ \O_{F_s}[\Tr]}^{\Index{L_0}{F}}}\\
    &=\frac{\Index{\frac{1}{p^N}(\O_{F_s}\otimes\O_{L_0})[\Tr]}{\log_\omega(\Cohomology{1}{\f}{F_s}{T}_{/\tor}[\Tr])}}{p^{N\Index{L}{F}}}.
\end{split}
\end{equation}

Now consider the following commutative diagram
\begin{equation}
    \begin{tikzcd}
    0\arrow{r}& \Cohomology{1}{\f}{F_s}{T}_{/\tor}\arrow{r}{\log_\omega}\arrow{d}{\Tr}&\frac{1}{p^N}\O_{F_s}\otimes\O_{L_0}\arrow{r}\arrow{d}{\Tr}&A_s\arrow{r}\arrow{d}{\Tr}&0\\
    0\arrow{r}&\Cohomology{1}{\f}{F_{s-1}}{T}_{/\tor}\arrow{r}{\log_\omega}&\frac{1}{p^N}\O_{F_{s-1}}\otimes\O_{L_0}\arrow{r}&A_{s-1}\arrow{r}&0
    \end{tikzcd}
\end{equation}

The modules $A_s$ and $A_{s-1}$ are finite, so a Snake Lemma gives us that
\begin{equation}
    \Index{\frac{1}{p^N}(\O_{F_s}\otimes\O_{L_0})[\Tr]}{\log_\omega(\Cohomology{1}{\f}{F_s}{T}_{/\tor}[\Tr])}=\frac{\abs{A_{s}}}{\abs{A_{s-1}}}\cdot \Index{\O_{F_{s-1}}}{\Tr(\O_{F_s})}^{\Index{L_0}{F}}.
\end{equation}

Since $\abs{A_k}=p^{Np^k\Index{L_0}{F}}\cdot A'(T,\omega,k),$ we have
\begin{equation}
    \frac{\abs{A_s}}{\abs{A_{s-1}}}=p^{N\phi(p^s)\Index{L_0}{F}}\cdot\frac{A'(T,\omega,s)}{A'(T,\omega,s-1)}=p^{N\Index{L}{F}}\cdot\frac{A'(T,\omega,s)}{A'(T,\omega,s-1)}.
\end{equation}

Combining the equations above give the desired result.
\end{proof}

Combining \eqref{eq_appendix-partial-progress} and \cref{comp1,comp2,comp3}, we get
\begin{equation}
    \Index{\Cohomology{1}{\f}{\Q_p}{T\otimes\chi}_{/\tor}}{\O_L\cdot z}=\frac{\Index{(\O_{F_s})_\chi}{\log_\omega z}}{\abs{\O_L/p^s}}\cdot\frac{A'(T,\omega,s-1)}{A'(T,\omega,s)}\cdot\frac{\abs{\Tr\left(\Cohomology{1}{\f}{F_s}{T}_\tor\right)}}{\abs{\Cohomology{1}{\f}{F_{s-1}}{T}_\tor}}.
\end{equation}
Since $\Cohomology{1}{\f}{\Q_p}{T\otimes\chi}_\tor\iso\Cohomology{1}{\f}{F_s}{T}[\Tr]_\tor$ by \cref{twist-calculation}, this is the same as
\begin{equation}
    \Index{\Cohomology{1}{\f}{\Q_p}{T\otimes\chi}_{/\tor}}{\O_L\cdot z}\cdot\abs{\Cohomology{1}{\f}{\Q_p}{T\otimes\chi}_\tor}=\frac{\Index{(\O_{F_s})_\chi}{\log_\omega z}}{\abs{\O_L/p^s}}\cdot\frac{A(T,\omega,s-1)}{A(T,\omega,s)},
\end{equation}
which is \eqref{eq_appendix-goal}.
    \subsection{Computations for \texorpdfstring{\cref{formula-index}}{\ref*{formula-index}}}\label{app-sec2}
Let $F_\infty^\ur$ and $F_\infty^{\mathrm{cyc}}$ be respectively the unramified and cyclotomic $\Z_p$-extensions of $\Q_p.$ By local class field theory, there is a unique $\Z_p\times\Z_p$ extension of $\Q_p,$ which we will call $M_\infty.$ Then we have
\begin{equation}
    M_\infty=F_\infty^\ur F_\infty=F_\infty^\ur F_\infty^{\mathrm{cyc}}.
\end{equation}
Let $f=p^{f_0}=f(F_\infty/F).$ Then, by checking the ramification indexes, we have
\begin{equation}
    F_\infty^\ur F_s=F_\infty^\ur F_{s-f_0}^{\mathrm{cyc}}\eqdef M_s.
\end{equation}

We may picture these extensions in the following diagram, where the labels are lifts of generators of the Galois groups to $\Gal{M_s}{\Q_p}.$

\begin{equation}
    \begin{tikzcd}
    M_s\arrow[equal]{rrrr}\arrow[swap, dash]{rrd}{\left<\gamma\right>}&&&&M_s\arrow[dash]{dll}{\left<\gamma\right>}\\
    &&F_\infty^\ur&&\\
    F_s\arrow[dash]{uu}{\left<\sigma_0\right>}&&&&\\
    &&F_{f_0}\arrow[swap, dash]{ull}{\left<\gamma\right>}\arrow[swap, dash]{uu}{\left<\sigma^f\right>}&&F_{s-f_0}^{\mathrm{cyc}}\arrow[dash]{uuu}{\left<\sigma\right>}\\
    &&\Q_p\arrow[dash]{lluu}{\left<\gamma_0\right>}\arrow[swap, dash]{u}{\left<\sigma\right>}\arrow[swap, dash]{urr}{\left<\gamma\right>}&&
    \end{tikzcd}
\end{equation}
We may choose $\sigma_0=(\gamma^a,\sigma^f)$ for some $a\in\Z_p.$ If $f>1,$ then we can choose $\gamma_0=(\gamma^b,\sigma^c)$ such that $\gamma_0^f\equiv \gamma,$ that is, $fb-ac\equiv1\mod p^s.$

With the notation of \Cref{appendix-section}, we want to understand the image of $(\O_{F_s})_{\chi}\O_\infty^\ur$ in $L F_\infty^\ur,$ via the map
\begin{equation}
    S\colon a\otimes b\mapsto \frac{1}{p^s}\sum_{g\in \Gal{F_s}{\Q_p}}g(a)\chi(g)b\in F_sL\subseteq M_sL=L F_\infty^\ur.
\end{equation}
Note that this is the map such that the following diagram is commutative.
\begin{equation}
\begin{tikzcd}
    (\O_{F_s})^\chi\arrow[hook]{d}\arrow[hook]{dr}&\\
    (\O_{F_s})_\chi\arrow{r}{S}&\O_{F_s}\O_L
\end{tikzcd}
\end{equation}

\begin{theorem}
    We have
    \begin{equation}
        S((\O_{F_s})_\chi\O_\infty^\ur)=\frac{\mathfrak{g}\left(\chi\lvert_{\Gal{F_s}{F_{f_0}}}\right)}{p^s}\O_L\O_\infty^\ur.
    \end{equation}
\end{theorem}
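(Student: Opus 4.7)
The strategy is to reduce, via Shapiro's lemma, to a single sum over the inertia subgroup $G_s^0 \defeq \Gal{F_s}{F_{f_0}}$, and then to identify that sum with the Galois Gauss sum using the canonical isomorphism $G_s^0 \cong \Gal{F_{s-f_0}^{\mathrm{cyc}}}{\Q_p}$ arising from the dual presentation $M_s = F_\infty^\ur F_s = F_\infty^\ur F_{s-f_0}^{\mathrm{cyc}}$.

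First I will use $F_\infty^\ur \cap F_s = F_{f_0}$ to establish the $G_s$-equivariant isomorphism $\O_\infty^\ur \otimes_{\Z_p} \O_{F_s} \cong \mathrm{Ind}_{G_s^0}^{G_s} \O_{M_s}$, where $G_s^0 \cong \Gal{M_s}{F_\infty^\ur}$ acts naturally on $\O_{M_s}$ and the cosets $G_s/G_s^0 \cong \Gal{F_{f_0}}{\Q_p}$ cyclically permute the $f = p^{f_0}$ factors indexed by the embeddings $F_{f_0} \hookrightarrow F_\infty^\ur$. Shapiro's lemma then gives $(\O_{F_s})_\chi \cdot \O_\infty^\ur \cong (\O_{M_s})_{\chi|_{G_s^0}}$, and tracing through this identification shows that under it $S$ corresponds to the $\O_\infty^\ur$-linear extension of $\alpha \mapsto \frac{1}{p^s} \sum_{h \in G_s^0} \chi(h) h(\alpha)$ on $\O_{M_s}$: the outer sum over $G_s/G_s^0$ is absorbed into the idempotent decomposition $\O_\infty^\ur \otimes_{\Z_p} \O_{F_{f_0}} \cong \prod_i \O_\infty^\ur$, with $\chi|_{G_s/G_s^0}$ contributing a unit factor.

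Next I will produce an explicit generator by setting $w \defeq \Tr_{\Q_p(\zeta_{p^{s-f_0+1}})/F_{s-f_0}^{\mathrm{cyc}}}(\zeta_{p^{s-f_0+1}}) \in \O_{F_{s-f_0}^{\mathrm{cyc}}} \subseteq \O_{M_s}$, where $\zeta_{p^{s-f_0+1}} = e^{2\pi i/p^{s-f_0+1}}$. Since the ambient Galois groups are all abelian, the trace commutes with the action of $\Gal{\Q_p(\zeta_{p^{s-f_0+1}})}{\Q_p}$; expanding the sum and reindexing along the splitting $(\Z/p^{s-f_0+1})^{\times} \cong (\Z/p)^{\times} \times (1 + p\Z/p^{s-f_0+1})$, and using that $\chi|_{G_s^0}$ extends canonically to a character on the full group trivial on the Teichm\"uller part, one recovers directly from the definition of the Galois Gauss sum that
\begin{equation*}
\sum_{h \in G_s^0} \chi(h) h(w) = \sum_{b \in (\Z/p^{s-f_0+1})^{\times}} \chi(b) \zeta_{p^{s-f_0+1}}^b = \mathfrak{g}\left(\chi|_{G_s^0}\right).
\end{equation*}

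Finally I will deduce the equality of ideals. One inclusion is immediate from the displayed formula applied to $w \in \O_{M_s}$. For the reverse inclusion, I will argue that the $\chi^{-1}$-eigenspace of $M_s \otimes_{\Q_p} L$ under $G_s^0$ is free of rank one over $F_\infty^\ur \otimes_{\Q_p} L$ (by the cyclicity of $G_s^0$ and the faithfulness of $\chi|_{G_s^0}$), so that every $\sum_{h \in G_s^0} \chi(h) h(\alpha)$ with $\alpha \in \O_{M_s}$ lies in $\mathfrak{g}(\chi|_{G_s^0}) \cdot \O_\infty^\ur \O_L$, once one checks that the integral image saturates the rank-one lattice. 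The main technical obstacle is Step 1: rigorously confirming that the Frobenius-sum contribution from $G_s/G_s^0$ indeed trivializes to a unit after base change, which requires carefully tracking how $\chi|_{G_s/G_s^0}$ pairs with the cyclic permutation action of $\Gal{F_{f_0}}{\Q_p}$ on the idempotents of $\O_\infty^\ur \otimes_{\Z_p} \O_{F_{f_0}}$.
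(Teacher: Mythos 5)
Your overall strategy --- separating the unramified and totally ramified directions via the induced-module structure $\O_\infty^\ur\otimes_{\Z_p}\O_{F_s}\cong\prod_{G_s/G_s^0}\O_{M_s}$, evaluating the Gauss sum on a normal basis element of the ramified part, and treating the Frobenius direction separately --- is sound, and is in substance a repackaging of the paper's computation (which writes $\alpha=\sum_l\alpha_l\zeta_l$ with $\zeta_l$ the traces of roots of unity and $\alpha_l\in\O_\infty^\ur,$ then splits the sum over $G_s$ accordingly). But two load-bearing steps are asserted rather than proved, and one of them is false as stated. In Step 1, the contribution of the outer sum over $G_s/G_s^0$ is not ``a unit factor'': writing $G_s=\bigsqcup_{i}\sigma_0^iG_s^0$ one gets $S(\alpha)=\frac{1}{p^s}\sum_{i=0}^{f-1}\chi(\sigma_0)^i\sigma_0^i(\beta)$ with $\beta=\sum_{h\in G_s^0}\chi(h)h(\alpha),$ i.e.\ a $\chi$-twisted trace in the Frobenius direction, which for an individual $\beta$ is certainly not $\beta$ times a unit. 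What is true --- and what you must actually prove --- is that after multiplying the domain by $\O_\infty^\ur$ this twisted trace becomes surjective onto $\O_\infty^\ur,$ because $\chi\equiv1\bmod\m_L$ (as $\chi$ has $p$-power order) reduces it to the ordinary trace of an unramified extension, which is surjective on rings of integers. This is precisely the device the paper uses to close both the $f=1$ and $f>1$ cases, and without it your Step 1 does not go through.

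Second, Step 3 only establishes the required containment after inverting $p$: knowing that the $\chi^{-1}$-eigenspace of $M_s\otimes_{\Q_p}L$ is free of rank one over $F_\infty^\ur\otimes_{\Q_p}L$ shows that $\sum_h\chi(h)h(\alpha)$ is a multiple of $\mathfrak{g}(\chi\lvert_{G_s^0})$ by an element of the \emph{field}, not of $\O_L\O_\infty^\ur;$ the clause ``once one checks that the integral image saturates the rank-one lattice'' is exactly the statement to be proved. The fix is to use that $\{h(w)\}_{h\in G_s^0}$ is an $\O_\infty^\ur$-normal integral basis of $\O_{M_s}$ (equivalently, the paper's basis $\zeta_i$), so that for $\alpha=\sum_j a_j\gamma^j(w)$ with $a_j\in\O_\infty^\ur$ one gets $\sum_h\chi(h)h(\alpha)=\mathfrak{g}(\chi\lvert_{G_s^0})\cdot\sum_j a_j\chi(\gamma)^{-j},$ which is integrally divisible by the Gauss sum and, as $\alpha$ varies, exactly exhausts $\mathfrak{g}(\chi\lvert_{G_s^0})\O_L\O_\infty^\ur.$ With these two repairs your argument coincides with the paper's proof up to bookkeeping; Step 2 (the evaluation on $w$) is correct.
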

\begin{proof}

Let $\zeta$ be a primitive $p^{s-f_0+1}$-th root of unity. Let $g$ be a primitive root of $\Z/p^{s-f_0+1}\Z.$ We let $\zeta_i\defeq\sum_{j=1}^{p-1}\zeta^{g^{i+p^{s-f_0}f}}.$ be a normal basis of $\O_{s-f_0}^{\mathrm{cyc}}.$ We may choose $g$ such that $\gamma(\zeta_i)=\zeta_{i+1}.$ For $\alpha\in \O_{F_s},$ there is a unique decomposition $\alpha=\sum_{l=1}^{p^s/f}\alpha_l\zeta_l$ for $\alpha_l\in \O_\infty^\ur.$

Suppose first that $f=1.$ Then we have
\begin{equation}
    S(\alpha)=\frac{1}{p^s}\sum_{i=1}^{p^s}\gamma^i\left(\sum_{l=1}^{p^s}\alpha_l\zeta_l\right)\chi(\gamma^i)=\frac{1}{p^s}\sum_{l=1}^{p^s}\alpha_l\sum_{i=1}^{p^s}\zeta_{l+i}\chi(\gamma^i)=\frac{\mathfrak{g}(\chi\lvert_{\Gal{F_s}{\Q_p}})}{p^s}\cdot\sum_{l=1}^{p^s}\alpha_l\chi^{-1}(\gamma^l).
\end{equation}
Since $\alpha\in F_s,$ we must have $\sigma_0(\alpha)=\alpha,$ that is, $\sigma(\alpha_l)=\alpha_{l+a}.$ We have that $\sum_{l=1}^{p^s}\alpha_l\chi^{-1}(\gamma^l)\equiv \sum_{l=1}^{p^s}\alpha_l\mod\m_L,$ so if $p^r\| a,$ we have
\begin{equation}
    \sum_{l=1}^{p^s}\alpha_l\chi^{-1}(\gamma^l)\equiv \sum_{l=1}^{p^r}\Tr_{F_{s-r}^\ur/\Q_p}(\alpha_l)\mod\m_L,
\end{equation}
and $\alpha$ is determined by $\alpha_l\in \O_{s-r}^\ur$ for $1\le l\le p^r.$ Since trace is surjective on the ring of integers for unramified extensions, this means that the image we want is $\frac{\mathfrak{g}(\chi\lvert_{\Gal{F_s}{\Q_p}})}{p^s}\O_L\O_\infty^\ur.$

Now suppose $f>1.$ Then, by $p^s\mid fb-ac-1,$ we have $p\nmid a.$ This means that we may write $\alpha_l=\sigma^{fl/a}(\alpha_0).$ So
\begin{equation}
    S(\alpha)=\frac{1}{p^s}\sum_{i=1}^{p^s}\gamma_0^i\left(\sum_{l=1}^{p^s/f}\sigma^{fl/a}(\alpha_0)\zeta_l\right)\chi(\gamma_0^i)=\frac{1}{p^s}\sum_{i=1}^{p^s}\sum_{l=1}^{p^s/f}\sigma^{fl/a+ci}(\alpha_0)\zeta_{l+bi}\chi(\gamma_0^i).
\end{equation}
Changing $l\mapsto l-bi$ and then $i\mapsto fl-i,$ and using that $p^s\mid fb-ac-1,$ we have
\begin{equation}
    S(\alpha)=\frac{1}{p^s}\sum_{i=1}^{p^s}\sigma^{i/a}(\alpha_0)\chi^{-1}(\gamma_0^i)\sum_{l=1}^{p^s/f}\zeta_l\chi(\gamma_0^{fl})=\frac{\mathfrak{g}\left(\chi\lvert_{\Gal{F_s}{F_{f_0}}}\right)}{p^s}\sum_{i=1}^{p^s}\sigma^{i/a}(\alpha_0)\chi^{-1}(\gamma_0^i).
\end{equation}
As before, we have $\sum_{i=1}^{p^s}\sigma^{i/a}(\alpha_0)\chi^{-1}(\gamma_0^i)\equiv \sum_{i=1}^{p^s}\sigma^{i/a}(\alpha_0)\equiv\Tr_{F_s^\ur/F_0}(\alpha_0)\mod \m_L.$ So the claim follows.
\end{proof}

\begin{corollary}
Let $c\in\Cohomology{1}{\mathcal{F}}{K}{T\otimes\chi}$ be any class with non-torsion localization at $v.$ We have
\begin{equation}
    \frac{\abs{(\O_{F_s})_\chi/\log_\omega(\loc{v}c)}}{\abs{\O/p^s}}=\abs{\O_L/\frac{u\cdot\log_{\omega,v} c}{\mathfrak{g}\left(\chi\lvert_{\Gal{F_s}{F_{f_0}}}\right)}}.
\end{equation}
for some $u\in\units{(\Z_p^\ur)}.$
\end{corollary}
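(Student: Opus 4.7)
The plan is to combine the preceding Theorem, which computes $S((\O_{F_s})_\chi\O_\infty^\ur)=(\mathfrak{g}/p^s)\O_L\O_\infty^\ur$, with the direct identification $S(\log_\omega(\loc{v}c))=\log_{\omega,v}(c)$, and then translate the resulting equality of $\O_L\O_\infty^\ur$-submodules into an equality of $\O_L$-module orders via a length calculation. For the identification, I would write $\log_\omega(\loc{v}c)=\sum_i a_i\otimes b_i$ as an element of $(F_s\otimes_{\Q_p}L(\chi))^{G_{\Q_p}}$. The $G_{\Q_p}$-invariance condition $\sum_i g(a_i)\otimes\chi(g)b_i=\sum_i a_i\otimes b_i$ in $F_s\otimes_{\Q_p}L$ becomes, after applying the $\Q_p$-linear multiplication map $F_s\otimes L\to F_sL$, the identity $\sum_i g(a_i)\chi(g)b_i=\sum_i a_ib_i$ for every $g\in G_s$. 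Summing over $G_s$ and dividing by $|G_s|=p^s$ then yields $S(\log_\omega(\loc{v}c))=\sum_i a_ib_i=\log_{\omega,v}(c)$, which is precisely the image of $\log_\omega(\loc{v}c)$ under the natural map used to define $\log_{\omega,v}$.

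Combining this with the preceding Theorem shows $\log_{\omega,v}(c)\in(\mathfrak{g}/p^s)\O_L\O_\infty^\ur$, so $p^s\log_{\omega,v}(c)/\mathfrak{g}\in\O_L\O_\infty^\ur$. Since $F_s/\Q_p$ is totally ramified we have $F_sL\cap L\Q_p^\ur=L$, so $\log_{\omega,v}(c)/\mathfrak{g}\in\tfrac{1}{p^s}\O_L$, and we may choose a unit $u\in\units{\Z_p^\ur}$ such that $y\defeq u\log_{\omega,v}(c)/\mathfrak{g}\in\O_L$ has the same $\pi_L$-valuation as $\log_{\omega,v}(c)/\mathfrak{g}$. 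Now $\O_L\hookrightarrow\O_L\O_\infty^\ur$ is an unramified extension preserving the uniformizer $\pi_L$, so lengths of finite $\O_L$-modules are preserved under the base change $\otimes_{\O_L}\O_L\O_\infty^\ur$. By the preceding Theorem, $S$ induces a surjection $(\O_{F_s})_\chi\otimes_{\O_L}\O_L\O_\infty^\ur\twoheadrightarrow(\mathfrak{g}/p^s)\O_L\O_\infty^\ur$ onto a free rank-one module; since $(\O_{F_s})_\chi$ has $\O_L$-rank one by \cref{comp1}, this surjection becomes an isomorphism after quotienting by torsion. Since $\log_\omega(\loc{v}c)$ maps to $\log_{\omega,v}(c)\neq 0$ under $S$, its cyclic $\O_L$-submodule is transverse to the torsion, and the calculation reduces to
\begin{equation}
    \mathrm{length}_{\O_L}\!\left((\O_{F_s})_\chi/\log_\omega(\loc{v}c)\right)=v_{\pi_L}\!\left(\frac{p^s\log_{\omega,v}(c)}{\mathfrak{g}}\right)=v_{\pi_L}(p^s)+v_{\pi_L}(y).
\end{equation}
Exponentiating via $|M|=q_L^{\mathrm{length}_{\O_L}(M)}$ for finite $\O_L$-modules gives the desired equality.

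The main technical point is controlling the (possible) torsion of $(\O_{F_s})_\chi$, which is finite by \cref{comp1} with order bounded by $|\O_L/p^s|$, together with the intersection $F_sL\cap L\Q_p^\ur=L$ needed to place $y$ in $\O_L$ up to a unit in $\Z_p^\ur$. I expect both to be routine: the torsion lies in the kernel of $S\otimes\O_\infty^\ur$ but is avoided by $\log_\omega(\loc{v}c)$ since this class has nonzero $S$-image, while the intersection identity follows from $F_s/\Q_p$ being totally ramified.
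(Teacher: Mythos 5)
Your route is the intended one (the paper states this corollary with no separate proof): the identification $S(\log_\omega(\loc{v}c))=\log_{\omega,v}(c)$, which you obtain by applying the multiplication map $F_s\otimes_{\Q_p}L\to F_sL$ to the $G_{\Q_p}$-invariance relation, is exactly the content of the commutative triangle displayed just before the theorem; combining this with the theorem's computation of $S((\O_{F_s})_\chi\O_\infty^\ur)$ and descending from $\O_L\O_\infty^\ur$ to $\O_L$ via a unit of $\units{(\Z_p^\ur)}$ is the right mechanism, and your point that only the $\pi_L$-valuation matters for the descent is fine.

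There is, however, a genuine gap in your handling of the torsion of $M\defeq(\O_{F_s})_\chi$. Writing $x=\log_\omega(\loc{v}c)$, the fact that $\O_L\cdot x$ meets $M_\tor$ trivially does not remove the torsion from the quotient; it gives an exact sequence
\begin{equation}
    0\to M_\tor\to M/\O_L\cdot x\to M_{/\tor}/\O_L\cdot\bar{x}\to 0,
\end{equation}
so that $\length[\O_L]{M/\O_L\cdot x}=\length[\O_L]{M_\tor}+\length[\O_L]{M_{/\tor}/\O_L\cdot\bar{x}}$, whereas your final display retains only the second term. In other words your computation silently assumes $M_\tor=0$, and this is neither proved nor routine: one has $M\iso(\O_{F_s}/\Phi_{p^s}(\gamma)\O_{F_s})\otimes_{\Z_p}\O_{L_0}$ with $\Phi_{p^s}(\gamma)$ acting on $\O_{F_s}$ as $\Tr_{F_s/F_{s-1}}$, whence $M_\tor\iso(\O_{F_{s-1}}/\Tr_{F_s/F_{s-1}}(\O_{F_s}))\otimes_{\Z_p}\O_{L_0}$. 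This vanishes only when the trace is surjective on rings of integers, which fails whenever $F_s/F_{s-1}$ is wildly ramified, i.e.\ precisely in the range $s>f_0$ where the corollary is used; note that the relevant quantity $\Index{\O_{F_{s-1}}}{\Tr(\O_{F_s})}^{\Index{L_0}{\Q_p}}$ is exactly the factor tracked in \cref{comp1} and entering \eqref{eq_appendix-partial-progress}. So as written your argument is off by this factor, and the "main technical point" you defer is not a routine verification: you must either show that this torsion contribution is absorbed by the normalization of $\abs{(\O_{F_s})_\chi/\log_\omega(\loc{v}c)}$ used in \cref{first-special-value-formula}, or carry $\length[\O_L]{M_\tor}$ through the computation and exhibit the cancellation explicitly.
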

\end{appendix}

\newpage
\begingroup
\setstretch{2.5}
\bibliographystyle{alpha}
\bibliography{references}
\endgroup

\end{document}